\documentclass[12pt]{amsart}
\usepackage[margin=1.2in]{geometry}
\usepackage{graphicx, amsmath, amsthm, amssymb, xcolor, hyperref} 

\theoremstyle{plain}
\newtheorem{theorem}{Theorem}[section]
\newtheorem{lemma}[theorem]{Lemma}
\newtheorem{proposition}[theorem]{Proposition}
\newtheorem{corollary}[theorem]{Corollary}

\theoremstyle{definition}

\newtheorem{remark}[theorem]{Remark}

\newtheorem{example}[theorem]{Example}

\numberwithin{equation}{section}
\DeclareMathOperator{\supp}{supp}

\newcommand{\R}{\mathbb{R}}
\newcommand{\C}{\mathbb{C}}
\newcommand{\N}{\mathbb{N}}
\newcommand{\dt}{\text{d}t}
\newcommand{\abs}[1]{\lvert#1\rvert}
\newcommand{\norm}[1]{\lVert#1\rVert}
\newcommand{\floor}[1]{\lfloor#1\rfloor}

\renewcommand{\H}[1]{{\mathcal{D}^{\operatorname{ext}}_{\infty, #1}(\lambda)}}

\title[Dirichlet series generating holomorphic functions of finite order]{Riesz summability of Dirichlet series generating holomorphic functions of finite order}
\date{}

\author[A. Debrouwere]{Andreas Debrouwere}
\address{A. Debrouwere, Department of Mathematics and Data Science \\ Vrije Universiteit Brussel, Belgium\\ Pleinlaan 2 \\ 1050 Brussels \\ Belgium}
\email{andreas.debrouwere@vub.be}

\author[Y. Tranoy]{Yarne Tranoy}
\address{Y. Tranoy Department of Mathematics and Data Science \\ Vrije Universiteit Brussel, Belgium\\ Pleinlaan 2 \\ 1050 Brussels \\ Belgium}
\email{yarne.tranoy@vub.be}

\subjclass[2020]{\emph{Primary.} 30B50 \emph{Secondary.}  40A30}
\keywords{Dirichlet series; holomorphic functions of finite order; Riesz summability; Bohr's theorem}


\begin{document}

\maketitle

\begin{abstract}
Given a frequency $\lambda$, we study the Riesz summability of $\lambda$-Dirichlet series $\sum_{n=1}^\infty a_n e^{-\lambda_n s}$ generating holomorphic functions of finite order. We present a new separation condition on the frequency $\lambda$ ensuring that, for any $k \geq 0$, each $\lambda$-Dirichlet series that is somewhere Riesz summable of some order and admits a holomorphic extension $f$ to the right half-plane $\C_0$ satisfying $f(s) = O(|s|^k)$ as $|s| \to \infty$  on $\C_0$, is  in fact Riesz summable of order $k$ on $\C_0$. This extends Bohr's theorem, which corresponds to the case $k = 0$.   Our work improves a recent result of Defant and Schoolmann, who showed the above property under Landau's condition (LC).  Along the way, we also establish novel  bounds on the coefficients of such $\lambda$-Dirichlet series and, under a mild condition on the frequency $\lambda$, show that they are optimal.
\end{abstract}

\section{Introduction}

By a \emph{frequency}, we mean an increasing sequence $\lambda=(\lambda_n)_{n\in\N}$ of non-negative numbers tending to infinity.  A series $\sum_{n = 1}^\infty a_n e^{-\lambda_n s}$, where $a_n \in \C$, $n \in \N$, and $s \in \C$ denotes a complex variable, is called a \emph{$\lambda$-Dirichlet series}. We denote by $\mathcal{D}(\lambda)$ the space of all $\lambda$-Dirichlet series. This general framework includes as important examples the space $\mathcal{D}(\log n)$ of ordinary Dirichlet series and the space $\mathcal{D}(n)$, which, up to a change of variables, coincides with the space of power series.

This article is devoted to the study of Riesz summability properties of $\lambda$-Dirichlet series generating holomorphic functions of finite order, a topic which goes back to Hardy and Riesz \cite{hardy}. Most importantly, following recent work of Defant and Schoolmann \cite{finorder, bohrpoly}, we investigate a version of Bohr's theorem in this setting.

We start by discussing Bohr's original theorem. Given $\theta \in \R$, we write $\C_\theta = \{ s \in \C \mid \operatorname{Re} s > \theta\}$. Bohr \cite{Bohr} showed that, if a frequency $\lambda$ satisfies the separation condition
\begin{equation}
\tag{BC}
    \exists l,C>0 \, \forall n\in\N : \lambda_{n+1}-\lambda_n \geq C e^{-l\lambda_n},
\end{equation}
every $\lambda$-Dirichlet series  that converges on some half-plane and admits a bounded holomorphic extension to  $\C_0$ converges uniformly on $\C_\varepsilon$ for every $\varepsilon >0$. We will say that a frequency $\lambda$ satisfies \emph{Bohr's theorem} if the latter convergence property holds true. Note that $(\log n)$ has (BC) and thus satisfies Bohr's theorem.

Later on, Landau \cite{Landau} proved that the weaker condition
\begin{equation}
\tag{LC}
    \forall\delta>0 \, \exists C>0 \, \forall n\in\N : \lambda_{n+1}-\lambda_n\geq C e^{-e^{\delta\lambda_n}}
\end{equation}
suffices for $\lambda$ to satisfy Bohr's theorem.  
On the negative side, Neder \cite{Neder}  constructed, given any $l >0$, a  frequency $\lambda$ satisfying 
$$
\exists C>0 \, \forall n\in\N : \lambda_{n+1}-\lambda_n\geq C e^{-e^{l\lambda_n}}
$$
that does not satisfy Bohr's theorem.  

Over the past 20 years, there has been  renewed interest in Bohr's theorem. In \cite{BCQ}, a quantitative version of Bohr's theorem, in the the form of a maximal inequality, was established for ordinary Dirichlet series. This was extended to general $\lambda$-Dirichlet series satisfying (BC) and (LC) in \cite{schoolmann}. Bayart \cite{bayart}  proved a quantitative Bohr's theorem for frequencies $\lambda$ that satisfy the condition 
\begin{equation}
\tag{NC} 
\forall \delta>0 \, \exists C>0 \, \forall n \in \N \, \exists m> n:  \log\left(\frac{\lambda_m+\lambda_n}{\lambda_m-\lambda_n}\right)+(m-n)\leq C e^{\delta\lambda_n}.
\end{equation}
By \cite[Lemma 2.1]{bayart}, (LC) implies (NC). Conditions (LC) and (BC) impose that consecutive gaps $\lambda_{n+1} - \lambda_n$ cannot decay too fast, whereas (NC) allows consideration of wider gaps $\lambda_{m} - \lambda_n$, $m >n$. Bayart's proof  is based on the so-called Saksman vertical convolution formula \cite[Theorem 2.4]{bayart}, which is a simple but very flexible substitute of Perron's formula, together with suitable mollifiers; see \cite{first_saksman} for  the origin of this formula.

In their classical monograph \cite{hardy}, Hardy and Riesz made a systematic study of Dirichlet series generating holomorphic functions of finite order from the point of view of Riesz summability. We refer to the article \cite{finorder} of Defant and Schoolmann for a modern treatment of this theory. 

We need to introduce some terminology before we can continue. Let $\lambda$ be a frequency and $k \geq 0$. The \emph{$(\lambda, k)$-Riesz means} of an element $D = \sum_{n =1}^\infty a_n e^{-\lambda_n s} \in \mathcal{D}(\lambda)$ are defined as 
$$
R^{\lambda,k}_x(D)(s) = \sum_{\lambda_n\leq x}a_n \left(1-\frac{\lambda_n}{x}\right)^k e^{-\lambda_n s}, \qquad x > 0.
$$
The series $D$ is said to be \emph{$(\lambda,k)$-summable} at $s \in \C$ if the limit
$$
\lim_{x \to \infty} R^{\lambda,k}_x(D)(s)
$$
exists. We define $\H k$ as the space consisting all $D \in \mathcal{D}(\lambda)$ such that $D$ is $(\lambda,l)$-summable on $\C_\theta$ for some $l,\theta >0$ and its limit function
$$
f(s) = \lim_{x \to \infty}R^{\lambda,l}_x(D)(s), \qquad s \in \C_{\theta},
$$
has a holomorphic extension to $\C_0$, which we still denote by $f$, satisfying
$$
 \sup_{s \in \C_0} \frac{|f(s)|}{(1+|s|)^k} < \infty.
$$
The space $\H k$ can be be naturally identified with the space  $H^{\lambda}_{{\infty, k}}[\operatorname{Re} >0]$ considered in \cite{finorder, bohrpoly}; see Remark \ref{remid}. 
 
In this terminology, a fundamental result of Hardy and Riesz \cite[Theorem 41]{hardy} asserts that, for any frequency $\lambda$,  each $D \in \H k$ is $(\lambda,l)$-summable on $\C_0$ for every $l >k$; see also \cite[Section 3.5]{finorder} and \cite[Section 5.1]{helson}. It is now natural to ask whether every $D \in \H k$ is in fact $(\lambda,k)$-summable on $\C_0$. Due to its analogy with Bohr's theorem, we say that $\lambda$ satisfies \emph{Bohr's theorem of order $k$} if this holds. This property was recently introduced  and studied by Defant and Schoolmann \cite{finorder, bohrpoly}.  We note that Bohr's theorem of order $0$ is equivalent to Bohr's theorem \cite[Remark 2.7]{bohrpoly}.

Defant and Schoolmann proved Bohr's theorem of order $k$ under the condition (BC) \cite[Theorem 3.26]{finorder} and later relaxed this assumption to (LC) \cite[Theorem 3.8]{bohrpoly}. The main goal of this article is to improve upon these results by establishing Bohr's theorem of order $k$ for frequencies $\lambda$ satisfying a strictly weaker condition than (LC). This new condition is a strengthened variant of (NC); see Subsection \ref{subs} for its definition.  As in \cite{bohrpoly}, we show quantitative versions of Bohr's theorem of order $k$ via maximal inequalities. 


The proof that (LC) implies Bohr's theorem of order $k$ in \cite{bohrpoly} is based on a careful application of a technique due to Hardy and 
Riesz \cite[Theorem 22]{hardy}. Our approach is mostly different. For integral orders,  where the core of our argument lies, it is inspired by Bayart's method \cite{bayart}: We rely on a generalization of the Saksman vertical convolution formula adapted to the spaces $\H k$. As in \cite{bohrpoly}, we reduce the general (non-integral) case to the integral one by means of an argument from \cite[Theorem 22]{hardy}.

To extend Bayart's approach to our setting, we need to construct suitable mollifiers related to the Riesz kernels $\left(1-\frac{\cdot}{x}\right)^k$, $k \in \N$. This constitutes a substantial part of our work. Another difficulty is that the coefficients of elements of $\H k$, $k>0$, are in general unbounded, unlike in the case $k =0$. We establish new coefficient bounds for the  elements of $\H k$, which improve those obtained in \cite[Theorem 3.12]{finorder}.  Moreover, for 
$k \leq 1$ or under a mild condition on the frequency $\lambda$, we show that they are optimal. 

This article is organized as follows. In the preliminary Section \ref{sect-1}, we collect several basic definitions and results about Riesz summability and Dirichlet spaces. The generalization of the Saksman vertical convolution formula is shown in Section \ref{sect-2}. As a first application of this formula, we give in Section \ref{sect-3}  a different proof of a maximal inequality related to the $(\lambda,l)$-summability of elements of $\H k$, where $l >k$; originally shown in \cite[Theorem 3.6]{finorder} and \cite[Remark 3.7]{bohrpoly}. Next, in Section \ref{sect-4}, we study coefficient bounds for the elements of $\H k$. Section \ref{sect-5} is devoted to proving our main result, in which we establish Bohr's theorem of order $k$ for a new class of frequencies. 

\section{Preliminaries}\label{sect-1}
In this preliminary section, we recall the definition and some basic properties of Riesz summability and Dirichlet series. All the results are due to Hardy and Riesz \cite{hardy}; see \cite{finorder} for detailed proofs of these statements. Furthermore, we introduce the main spaces of Dirichlet series studied in this article, originally defined in \cite{finorder, bohrpoly}. We write $\N = \{1,2, \ldots\}$ and $\N_0 = \N \cup \{ 0\}$.

Let $\lambda$ be a frequency and $k  \geq 0$. Given a (formal) series $C = \sum_{n = 1}^\infty a_n$ of complex numbers, we define the \emph{$(\lambda, k)$-Riesz means} of $C$ as
$$
R^{\lambda,k}_x(C) = \sum_{\lambda_n\leq x}a_n \left(1-\frac{\lambda_n}{x}\right)^k, \qquad x > 0.
$$
The series $C$ is said to be \emph{$(\lambda, k)$-Riesz summable} if the limit
$$
\lim_{x \to \infty} R^{\lambda,k}_x(C) 
$$
exists. We will also consider the summatory function
\[S_x^{\lambda, k}(C)=x^kR_x^{\lambda, k}(C)=\sum_{\lambda_n\leq x}a_n \left(x-\lambda_n\right)^k, \qquad x > 0.\]

We have the following consistency result.
\begin{proposition} \cite[Theorem 16]{hardy} \emph{(see also \cite[Theorem 2.7]{finorder})}
\label{con-order}
Let $\lambda$ be a frequency and   $l \geq k \geq 0$. If a series $C = \sum_{n = 1}^\infty a_n$ is $(\lambda,k)$-summable, then it is also  $(\lambda,l)$-summable and 
$$
\lim_{x \to \infty} R^{\lambda,k}_x(C)  = \lim_{x \to \infty} R^{\lambda,l}_x(C).
$$
\end{proposition}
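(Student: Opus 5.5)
We prove the consistency theorem by representing the Riesz sum of order $l$ as a fractional integral of the Riesz sum of order $k$, and then running a Toeplitz-type averaging argument. Write $A(t)=\sum_{\lambda_n\le t}a_n$, so that $S^{\lambda,k}_x(C)=k\int_0^x (x-t)^{k-1}A(t)\,dt$ for $k>0$ and $S^{\lambda,0}_x=A(x)$. The key identity, valid for $l>k\ge 0$ and $x>0$, is
$$
S^{\lambda,l}_x(C)=\frac{\Gamma(l+1)}{\Gamma(k+1)\Gamma(l-k)}\int_0^x (x-t)^{l-k-1}S^{\lambda,k}_t(C)\,dt .
$$
To verify it, substitute the definition of $S^{\lambda,k}_t$ and interchange the finite sum with the integral; this is legitimate because only finitely many $\lambda_n\le x$ occur. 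For each $n$ we are then left with
$$
\int_{\lambda_n}^x (x-t)^{l-k-1}(t-\lambda_n)^k\,dt=(x-\lambda_n)^l B(l-k,k+1),
$$
which follows from the change of variables $t=\lambda_n+u(x-\lambda_n)$. This Beta-integral computation is the core of the identity.

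Next, set $R_t:=R^{\lambda,k}_t(C)=t^{-k}S^{\lambda,k}_t(C)$, so that $R_t\to L$ as $t\to\infty$. Dividing the identity by $x^l$ and substituting $t=xu$ gives
$$
R^{\lambda,l}_x(C)=c_{k,l}\int_0^1 (1-u)^{l-k-1}u^k R_{xu}\,du ,\qquad c_{k,l}=\frac{\Gamma(l+1)}{\Gamma(k+1)\Gamma(l-k)}.
$$
The weight $c_{k,l}(1-u)^{l-k-1}u^k$ is nonnegative and integrable on $(0,1)$. Its total mass is $c_{k,l}B(l-k,k+1)=1$, since $B(l-k,k+1)=\Gamma(l-k)\Gamma(k+1)/\Gamma(l+1)$.

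The remaining step, which is the only delicate point, is to pass to the limit inside the integral. There is a subtlety: $R_t$ is defined only for $t>0$, and it is not automatically bounded near $t=0$ when $\lambda_1=0$ and $k>0$. However, $R_t=0$ for $t<\lambda_1$ if $\lambda_1>0$, and in general $S^{\lambda,k}_t$ is bounded on bounded intervals. Fix $\varepsilon>0$ and choose $T$ with $|R_t-L|<\varepsilon$ for $t>T$. Split the integral at $u=T/x$. On $u>T/x$ the contribution to $R^{\lambda,l}_x(C)-L$ is at most $\varepsilon$ in absolute value, since the weight has mass at most $1$. On $u\le T/x$, we return to the unscaled form: the contribution equals
$$
x^{-l}c_{k,l}\int_0^T (x-t)^{l-k-1}\bigl(S^{\lambda,k}_t-Lt^k\bigr)\,dt .
$$
Here $\sup_{[0,T]}|S^{\lambda,k}_t|<\infty$ because $S^{\lambda,k}_t$ is a finite sum of bounded terms there. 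For $x\ge 2T$ we also have $(x-t)^{l-k-1}\le C x^{l-k-1}$, with $C$ depending on the sign of $l-k-1$. Hence this piece is $O(x^{-k-1})\to 0$.

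Combining the two pieces gives $\limsup_{x\to\infty}|R^{\lambda,l}_x(C)-L|\le\varepsilon$, and since $\varepsilon$ was arbitrary, $R^{\lambda,l}_x(C)\to L$. This proves both the summability of order $l$ and the equality of the limits. The case $l=k$ is trivial.
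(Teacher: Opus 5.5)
Your proof is correct and is the classical Hardy--Riesz argument that the paper cites without proof: the fractional-integration identity you derive is exactly the paper's Lemma \ref{technical-0} (Hardy--Riesz's Lemma 6), and the limit passage is the standard Toeplitz-type averaging with a weight of total mass $1$. One minor point: the ``subtlety'' you flag does not arise, since $0\le 1-\lambda_n/t\le 1$ for $\lambda_n\le t$ gives $|R^{\lambda,k}_t(C)|\le\sum_{\lambda_n\le T}|a_n|$ for $0<t\le T$; your treatment via $S^{\lambda,k}_t$ is valid regardless.
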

The following two lemmas allow for changing the order of Riesz summability.
\begin{lemma}  \cite[Lemma 6]{hardy} \emph{(see also \cite[Lemma 4.4]{finorder})}
\label{technical-0}
Let $\lambda$ be a frequency, $k \geq 0$, and $\varepsilon >0$.  Let $C = \sum_{n = 1}^\infty a_n$ be a series. Then, for all $x > 0$,
    \[ S^{\lambda, k + \varepsilon}_x(C) =  \frac{\Gamma(k +\varepsilon+1)}{\Gamma(k+1)\Gamma(\varepsilon)}\int_0^x S_y^{\lambda, k} (C)(x-y)^{\varepsilon-1}{\rm d} y.\]
\end{lemma}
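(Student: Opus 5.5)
The identity is linear in $C$, and both sides are finite sums over $\lambda_n\le x$ (for $y<\lambda_n$ the function $S^{\lambda,k}_y(C)$ contains no term with index $n$). So I would verify it one term at a time and then sum, without any convergence issues. For a single term $a_n$, with $\lambda_n\le x$, the right-hand side becomes
\[
\frac{\Gamma(k+\varepsilon+1)}{\Gamma(k+1)\Gamma(\varepsilon)}\,a_n\int_{\lambda_n}^x (y-\lambda_n)^k (x-y)^{\varepsilon-1}\,{\rm d}y .
\]
If $\lambda_n > x$, the term contributes zero to both sides.

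Next I would evaluate the integral with the substitution $y=\lambda_n+t(x-\lambda_n)$, $t\in[0,1]$. This turns it into
\[
(x-\lambda_n)^{k+\varepsilon}\int_0^1 t^k(1-t)^{\varepsilon-1}\,{\rm d}t=(x-\lambda_n)^{k+\varepsilon}B(k+1,\varepsilon)=(x-\lambda_n)^{k+\varepsilon}\frac{\Gamma(k+1)\Gamma(\varepsilon)}{\Gamma(k+\varepsilon+1)}.
\]
Multiplying by the prefactor leaves exactly $a_n(x-\lambda_n)^{k+\varepsilon}$, which is the $n$-th term of $S^{\lambda,k+\varepsilon}_x(C)$. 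Summing over $\lambda_n\le x$ gives the claim.

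The only points needing care are minor. The Beta integral converges because $\varepsilon>0$ and $k\ge 0$; the singularity at $y=x$ is integrable. At $k=0$, adopt the convention $0^0=1$, or equivalently use $(y-\lambda_n)^0=1$ for $y\ge\lambda_n$. The integrand $S^{\lambda,k}_y(C)$ is a finite step-like combination on $[0,x]$, so the interchange of the finite sum with the integral is trivial. I expect no real obstacle: the proof is just the Beta function identity.
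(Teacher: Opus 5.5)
Your proof is correct. The paper gives no proof of its own and simply cites \cite[Lemma 6]{hardy} and \cite[Lemma 4.4]{finorder}, and the standard argument there is exactly this termwise evaluation of $\int_{\lambda_n}^x (y-\lambda_n)^k (x-y)^{\varepsilon-1}\,{\rm d}y$ via the Beta function. You use two facts only implicitly: $\lambda_n\ge 0$ (part of the definition of a frequency) guarantees $[\lambda_n,x]\subseteq[0,x]$, and in the degenerate case $\lambda_n=x$ both sides vanish because $k+\varepsilon>0$.
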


\begin{lemma}  \cite[Lemma 8]{hardy} \emph{(see also \cite[Lemma 4.9]{finorder})}
\label{technical}
Let $\lambda$ be a frequency,   $k \geq 0$, and $0<\varepsilon \leq1$.  Let $C = \sum_{n = 1}^\infty a_n$ be a series. Then, for all $0<\xi \leq x$,
    \[\frac{\Gamma(k +\varepsilon+1)}{\Gamma(k+1)\Gamma(\varepsilon)}\abs{\int_0^\xi S_y^{\lambda, k} (C)(x-y)^{\varepsilon-1}{\rm d} y}\leq 2\sup_{y \leq \xi}\abs{S_y^{\lambda, k+\varepsilon}(C)}.\]
\end{lemma}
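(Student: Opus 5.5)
The plan is to rewrite $S^{\lambda,k}_y(C)$ inside the integral as a finite sum over the terms $a_n$, integrate explicitly, and then recognise the result as a Riesz mean of order $k+\varepsilon$ evaluated at $\xi$, with a correction that can be controlled. Interchanging the finite sum and the integral gives
\[
\int_0^\xi S_y^{\lambda,k}(C)(x-y)^{\varepsilon-1}\,{\rm d}y=\sum_{\lambda_n\le \xi} a_n \int_{\lambda_n}^{\xi}(y-\lambda_n)^k(x-y)^{\varepsilon-1}\,{\rm d}y .
\]
Write $c=\Gamma(k+\varepsilon+1)/(\Gamma(k+1)\Gamma(\varepsilon))$. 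Lemma \ref{technical-0}, applied with upper limit $\xi$, already gives $c\int_0^\xi S_y^{\lambda,k}(\xi-y)^{\varepsilon-1}\,{\rm d}y=S^{\lambda,k+\varepsilon}_\xi(C)$. So the whole task is to compare the kernel $(x-y)^{\varepsilon-1}$ with $(\xi-y)^{\varepsilon-1}$ on $[0,\xi]$.

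Rather than compare the kernels directly, I would use the second mean value theorem, which is the classical Hardy--Riesz device. For $0<\varepsilon\le1$ and $y\le\xi\le x$, set $\phi(y)=\bigl((x-y)/(\xi-y)\bigr)^{\varepsilon-1}$. Then $(x-y)^{\varepsilon-1}=(\xi-y)^{\varepsilon-1}\phi(y)$, and $\phi$ is nonnegative and monotone in $y$. Since $\varepsilon-1\le0$ and the ratio $(x-y)/(\xi-y)$ increases from $x/\xi$ toward $\infty$, $\phi$ decreases from $\phi(0)=(x/\xi)^{\varepsilon-1}\le1$ to $0$. Bonnet's form of the second mean value theorem, applied to the integrand $g(y)=S_y^{\lambda,k}(\xi-y)^{\varepsilon-1}$ times the nonnegative decreasing $\phi$, yields some $\eta\in[0,\xi]$ with
\[
\int_0^\xi g\,\phi\,{\rm d}y=\phi(0^+)\int_0^\eta S_y^{\lambda,k}(\xi-y)^{\varepsilon-1}\,{\rm d}y .
\]
This does not yet close the argument, because the remaining integral runs to $\eta$ while the kernel still carries $\xi$; that is exactly the same form as the original statement, with $(x,\xi)$ replaced by $(\xi,\eta)$. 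So the reduction should be organised differently.

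Instead, I would split the kernel in a way suited to the Bonnet argument: write $(x-y)^{\varepsilon-1}=(x-y)^{\varepsilon-1}\cdot 1$ and apply Bonnet in the other direction. Put $F(t)=\int_0^t S_y^{\lambda,k}(t-y)^{\varepsilon-1}{\rm d}y=c^{-1}S_t^{\lambda,k+\varepsilon}$, a quantity we control for every $t\le\xi$. The key identity to aim for expresses $\int_0^\xi S_y(x-y)^{\varepsilon-1}{\rm d}y$ as an average of the values $F(t)$, $t\le\xi$, against a measure of total mass at most $2$. Concretely, I would use the Beta-integral decomposition
\[
(x-y)^{\varepsilon-1}=\frac{1}{B(\varepsilon,1-\varepsilon)}\int_y^x (t-y)^{\varepsilon-1}(x-t)^{-\varepsilon}\,{\rm d}t \quad (\varepsilon<1).
\]
Inserting this and applying Fubini gives
\[
\int_0^\xi S_y(x-y)^{\varepsilon-1}{\rm d}y=\frac{1}{B}\int_0^x (x-t)^{-\varepsilon}\int_0^{\min(t,\xi)} S_y(t-y)^{\varepsilon-1}{\rm d}y\,{\rm d}t .
\]
For $t\le\xi$ the inner integral is $F(t)$. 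For $t>\xi$ it is the truncated integral again, which is harmless in the base case $\xi=x$ but not in general.

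The main obstacle is therefore handling the range $t>\xi$. There I would instead apply Bonnet with the decreasing factor $(t-y)^{\varepsilon-1}/(\xi-y)^{\varepsilon-1}\le1$, reducing to the values $F(\eta)$ at some $\eta\le\xi$. Once this is done, the first piece ($t\le\xi$) contributes at most $\sup_{t\le\xi}|F(t)|\cdot B^{-1}\int_0^\xi(x-t)^{-\varepsilon}{\rm d}t$, and the second piece contributes at most the same supremum times the complementary mass. The total mass is $B^{-1}\int_0^x(x-t)^{-\varepsilon}t^{\varepsilon-1}\,{\rm d}t=1$, so each piece is at most $\sup|F|$ and the sum is at most $2\sup|F|$. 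Multiplying through by $c$ gives the bound $2\sup_{y\le\xi}|S_y^{\lambda,k+\varepsilon}(C)|$. The case $\varepsilon=1$ is trivial: the kernel is $1$, $c=k+1$, and the integral equals $S^{\lambda,k+1}_\xi/(k+1)$ by Lemma \ref{technical-0}. The point I would check most carefully is that the mass bound in the $t>\xi$ range genuinely comes out at most $1$.
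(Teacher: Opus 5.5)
\textbf{There is a genuine gap: the decisive step rests on a false identity.} The Beta-integral decomposition you propose does not hold. Substituting $t=y+(x-y)u$ gives
\[
\int_y^x (t-y)^{\varepsilon-1}(x-t)^{-\varepsilon}\,{\rm d}t=B(\varepsilon,1-\varepsilon)\,(x-y)^{(\varepsilon-1)+(-\varepsilon)+1}=B(\varepsilon,1-\varepsilon).
\]
So your right-hand side is identically $1$, not $(x-y)^{\varepsilon-1}$. This identity is really the semigroup relation behind Lemma \ref{technical-0}, linking $S^{k+1}$ and $S^{k+\varepsilon}$. To produce $(x-y)^{\varepsilon-1}$ from the inner kernel $(t-y)^{\varepsilon-1}$, the outer exponent would have to be $-1$, and that integral diverges.

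The closing mass count is unfounded for the same reason. The factor $t^{\varepsilon-1}$ in your ``total mass'' comes from none of your formulas. The actual weight on $t\le\xi$ has mass $\bigl(x^{1-\varepsilon}-(x-\xi)^{1-\varepsilon}\bigr)/\bigl((1-\varepsilon)B(\varepsilon,1-\varepsilon)\bigr)$, which is not bounded by $1$ and is not even scale invariant.

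Independently, your treatment of $t>\xi$ is circular. Bonnet's theorem with the factor $\bigl((t-y)/(\xi-y)\bigr)^{\varepsilon-1}$ gives $\phi(0)\int_0^\eta S_y^{\lambda,k}(\xi-y)^{\varepsilon-1}\,{\rm d}y$. That is a truncated integral whose kernel is still anchored at $\xi$, not $F(\eta)$. This is exactly the obstruction you identified in your first attempt, so the ``main obstacle'' is never resolved.

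\textbf{The missing idea is to invert the fractional integral in the other direction.} The paper gives no proof of its own; it quotes Hardy--Riesz and Defant--Schoolmann. Apply Lemma \ref{technical-0} with $(k+\varepsilon,1-\varepsilon)$ in place of $(k,\varepsilon)$. This gives $\int_0^y S_u^{\lambda,k}\,{\rm d}u=c_1G(y)$, where $G(y)=\int_0^y S_t^{\lambda,k+\varepsilon}(y-t)^{-\varepsilon}\,{\rm d}t$ and $c_1=\Gamma(k+1)/\bigl(\Gamma(k+\varepsilon+1)\Gamma(1-\varepsilon)\bigr)$.

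For $\xi<x$, integrate $c_1G'(y)(x-y)^{\varepsilon-1}$ by parts on $[0,\xi]$ and apply Fubini. Using $\frac{{\rm d}}{{\rm d}y}\bigl(\frac{y-t}{x-y}\bigr)^{1-\varepsilon}=(1-\varepsilon)(x-t)(y-t)^{-\varepsilon}(x-y)^{\varepsilon-2}$, one obtains
\[
\int_0^\xi S_y^{\lambda,k}(x-y)^{\varepsilon-1}\,{\rm d}y=c_1\int_0^\xi S_t^{\lambda,k+\varepsilon}\,\frac{(x-\xi)^{\varepsilon}(\xi-t)^{-\varepsilon}}{x-t}\,{\rm d}t .
\]
This kernel is positive, and its mass is at most $\int_0^\infty v^{-\varepsilon}(1+v)^{-1}\,{\rm d}v=\Gamma(\varepsilon)\Gamma(1-\varepsilon)$. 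Since $\frac{\Gamma(k+\varepsilon+1)}{\Gamma(k+1)\Gamma(\varepsilon)}\,c_1\,\Gamma(\varepsilon)\Gamma(1-\varepsilon)=1$, the inequality holds even with constant $1$ in place of $2$. The case $\xi=x$ is Lemma \ref{technical-0} itself, and $\varepsilon=1$ is trivial, as you note.

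So your instinct to average the values $S_t^{\lambda,k+\varepsilon}$, $t\le\xi$, against a positive measure was right. The correct measure, however, lives on $[0,\xi]$ and carries the singular weight $(\xi-t)^{-\varepsilon}$ near $t=\xi$.
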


Next, we consider Dirichlet series. Let $\lambda$ be a frequency. We write $a_n(D) = a_n$ for $D =  \sum_{n = 1}^\infty a_n e^{-\lambda_n s} \in \mathcal{D}(\lambda)$. Let $k \geq 0$. As in the introduction, we define, for $D \in \mathcal{D}(\lambda)$ and $s \in \C$,
\[R^{\lambda,k}_x(D)(s) =  R^{\lambda,k}_x\left (\sum_{n = 1}^\infty a_n(D) e^{-\lambda_n s} \right) = \sum_{\lambda_n\leq x}a_n(D) \left(1-\frac{\lambda_n}{x}\right)^ke^{-\lambda_n s}, \qquad x > 0; \] 
$S^{\lambda,k}_x(D)(s)$ is defined similarly.
We say that $D$ is \emph{$(\lambda,k)$-summable at $s \in \C$} if the series $\sum_{n = 1}^\infty a_n(D) e^{-\lambda_n s}$ is $(\lambda,k)$-summable, that is, if the limit
$$
 \lim_{x \to \infty}R^{\lambda,k}_x(D)(s)
$$
exists. 
Given $\theta  \in \R$, we write $\C_\theta = \{ s \in \C \mid \operatorname{Re} s > \theta\}$. We have the following basic result about the Riesz summability of $\lambda$-Dirichlet series.
\begin{proposition} \cite[Theorem 23 and 24]{hardy} \emph{(see also \cite[Theorem 2.9 and Remark 2.11]{finorder})}
\label{consistent}
Let $\lambda$ be a frequency and  $k \geq 0$. Let $D \in \mathcal{D}(\lambda)$ be $(\lambda, k)$-summable at $s_0  \in \C$. Set $ \sigma_0 = \operatorname{Re} s_0$. Then, $D$ is $(\lambda, k)$-summable at every $s \in \C_{\sigma_0 }$. Moreover, the limit function
$$
f(s) = \lim_{x \to \infty}R^{\lambda,k}_x(D)(s), \qquad s \in \C_{\sigma_0},
$$
 is holomorphic on $\C_{\sigma_0}$ and
    \[\Gamma(k+1)\frac{f(s_0 +s)}{s^{k+1}}=\int_0^\infty e^{-sx}S_x^{\lambda, k}(D)(s_0) {\rm d}x, \qquad s \in \C_0.\]
\end{proposition}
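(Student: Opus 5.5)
The plan is to first pass from $s_0$ to a general $s$ with $\operatorname{Re} s > \sigma_0$ by Abel summation, and then compute the Laplace transform of the summatory function at $s_0$.

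\textbf{Step 1: summation on $\C_{\sigma_0}$.} Write $s = s_0 + w$ with $\operatorname{Re} w > 0$. Set $b_n = a_n(D) e^{-\lambda_n s_0}$ and $C = \sum_n b_n$; by hypothesis $C$ is $(\lambda,k)$-summable with sum $A$. For the first assertion I would use Hardy--Riesz's classical statement that $(\lambda,k)$-summability is preserved under multiplication by factors $e^{-\lambda_n w}$ with $\operatorname{Re} w>0$. Concretely, I would express $S^{\lambda,k}_x(D)(s)$ through $S^{\lambda,k}_y(C)$ by iterated integration by parts. For integral $k$ this is direct: differentiating $(x-t)^k e^{-tw}$ in $t$ up to $k+1$ times and using Lemma~\ref{technical-0} to move between orders. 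For non-integral $k$, I would write $k = m + \varepsilon$ with $m\in\N_0$ and $0<\varepsilon\le 1$, and use Lemma~\ref{technical} to control the fractional part uniformly. Since $S_y^{\lambda,k}(C) = y^k(A+o(1))$, the boundary and integral terms are $x^k$ times a convergent quantity. The exponential factor $e^{-yw}$ makes the relevant integrals absolutely convergent, and a Tauberian-free Toeplitz-type argument then gives convergence of $R^{\lambda,k}_x(D)(s)$. I expect this to be the main obstacle: handling non-integral $k$ cleanly. The fallback is to quote \cite[Theorem 23]{hardy} for this step. Local uniformity of the convergence in $s$ comes from the same estimates, and holomorphy of $f$ then follows by Weierstrass's theorem, since each Riesz mean is entire.

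\textbf{Step 2: the Laplace identity.} Since $S_x^{\lambda,k}(C) = O(x^k)$, the integral $\int_0^\infty e^{-sx} S_x^{\lambda,k}(C)\,{\rm d}x$ converges absolutely for $\operatorname{Re} s>0$. I would substitute the definition, interchange sum and integral over $x \ge \lambda_n$, and use
\[
\int_{\lambda_n}^\infty e^{-sx}(x-\lambda_n)^k\,{\rm d}x = \Gamma(k+1)\, e^{-\lambda_n s}\, s^{-k-1}.
\]
This yields, at least formally, $\Gamma(k+1)s^{-k-1}\sum_n b_n e^{-\lambda_n s}$. The sum here must be interpreted as the $(\lambda,k)$-Riesz limit, which by Step~1 equals $f(s_0+s)$.

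To make the interchange rigorous, I would truncate at $X$ and compute $\int_0^X e^{-sx}S^{\lambda,k}_x(C)\,{\rm d}x$ exactly. This gives $\Gamma(k+1)s^{-k-1}\sum_{\lambda_n\le X} b_n e^{-\lambda_n s}$ minus tail terms $\int_X^\infty$. I would then compare these tail terms to $s^{-k-1}$ times the Riesz means at $s$ and show they vanish as $X\to\infty$.

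A simpler alternative is to first prove the identity for $\operatorname{Re} s$ large, where $S^{\lambda,k}_x(D)(s_0+s)$ is well behaved, and then extend it to all of $\C_0$. Both sides are holomorphic on $\C_0$: the left by Step~1, and the right because the integral converges absolutely and locally uniformly. The identity theorem then finishes the proof.
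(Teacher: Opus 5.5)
The paper gives no proof of this statement; it quotes Hardy--Riesz (Theorems 23 and 24). Your fallback to Theorem 23 in Step 1 therefore matches the paper. The gap is in Step 2, where neither of your routes closes.

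\emph{The ``simpler alternative''.} It needs a half-plane on which the interchange $\int_0^\infty e^{-sx}\sum_n=\sum_n\int_0^\infty$ is legitimate, i.e.\ on which $\sum_n |b_n| e^{-\lambda_n \operatorname{Re} s}<\infty$. For a general frequency no such half-plane exists. Take $\lambda_n=\log\log(n+2)$ and $a_n=(-1)^n$: the series $\sum_n(-1)^n(\log(n+2))^{-s}$ converges at $s=1$ (alternating series), but $\sum_n(\log(n+2))^{-\sigma}=\infty$ for every $\sigma\in\R$. So the identity theorem has nothing to start from.

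\emph{The truncation route.} It isolates the ordinary partial sums $\sum_{\lambda_n\le X}b_ne^{-\lambda_n s}$, which need not converge when $k>0$. For $k\in\N_0$ the tail you subtract is exactly
\[
e^{-sX}\sum_{i=0}^{k}\frac{k!}{i!\,s^{k-i+1}}\,S^{\lambda,i}_X(C).
\]
This involves the lower-order summatory functions of $C$, which are not controlled by $S^{\lambda,k}_X(C)=O(X^k)$, so the tails do not vanish on their own. What you would really have to prove is that the difference tends to $\Gamma(k+1)s^{-k-1}f(s_0+s)$. That is the whole content of the formula, and it is not supplied.

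\emph{The missing step is already in your Step 1.} For $k\in\N_0$ and $s\in\C_0$, integrating by parts $k+1$ times gives
\[
S^{\lambda,k}_x(D)(s_0+s)=e^{-sx}S^{\lambda,k}_x(D)(s_0)+\frac{(-1)^{k+1}}{k!}\int_0^x S^{\lambda,k}_t(D)(s_0)\,\frac{\partial^{k+1}}{\partial t^{k+1}}\Bigl[e^{-st}(x-t)^k\Bigr]{\rm d}t .
\]
Divide by $x^k$ and expand the derivative by Leibniz' rule. Every term except the one where all derivatives fall on $e^{-st}$ carries an extra factor $x^{-i}$ with $i\ge 1$. These tend to $0$ by dominated convergence, since $S^{\lambda,k}_t(D)(s_0)=O(1+t^k)$ and $\operatorname{Re}s>0$; the boundary term $e^{-sx}R^{\lambda,k}_x(D)(s_0)$ also tends to $0$. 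The surviving term is
\[
\frac{s^{k+1}}{k!}\int_0^x e^{-st}(1-t/x)^kS^{\lambda,k}_t(D)(s_0)\,{\rm d}t\;\longrightarrow\;\frac{s^{k+1}}{k!}\int_0^\infty e^{-st}S^{\lambda,k}_t(D)(s_0)\,{\rm d}t .
\]
This gives summability and the integral formula at once. Holomorphy of $f$ then follows directly from the absolutely and locally uniformly convergent Laplace integral.

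\emph{Non-integer $k$.} Once Theorem 23 gives $(\lambda,k)$-summability at $s_0+s$, the formula follows from the integer case $K=\lceil k\rceil$. Proposition~\ref{con-order} identifies the two limits. Lemma~\ref{technical-0} writes $S^{\lambda,K}/\Gamma(K+1)$ as the Riemann--Liouville integral of order $K-k$ of $S^{\lambda,k}/\Gamma(k+1)$, so their Laplace transforms differ exactly by the factor $s^{k-K}$. Hence only the $(\lambda,k)$-summability itself needs to be quoted from Hardy--Riesz.
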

Let $\lambda$ be a frequency and  $k \geq 0$. We define the \emph{abscissa of pointwise $(\lambda,k)$-summability} of an element $D \in \mathcal{D}(\lambda)$ as 
$$
\sigma^{\lambda, k}(D) = \inf\{ \sigma\in\R \mid D \text{ is } (\lambda, k)\text{-Riesz summable at } \sigma\}.
$$
By Proposition \ref{consistent}, $D$ is pointwise $(\lambda,k)$-summable on $\C_{\sigma^{\lambda, k}(D)}$ and $D$ is not $(\lambda,k)$-summable at every point of $\C \backslash \overline{\C_{\sigma^{\lambda, k}(D)}}$. The following  Bohr-Cahen type formula is very useful to determine this abscissa.
\begin{proposition} \cite[Theorem 31]{hardy} \emph{(see also \cite[Theorem 2.16]{finorder})}
    \label{Bohr-Cahen} 
    Let $\lambda$ be a frequency and  $k \geq 0$.  For every $D \in \mathcal{D}(\lambda)$, it holds that    
    \[\sigma_c^{\lambda, k}(D) \leq \limsup_{x\to \infty} \frac{\log\abs{R_x^{\lambda, k}(D)(0)}}{x}.\]
\end{proposition}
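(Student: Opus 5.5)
Let $L = \limsup_{x\to\infty} x^{-1}\log\abs{R_x^{\lambda,k}(D)(0)}$; the statement uses $\sigma_c^{\lambda,k}(D)$ for the abscissa $\sigma^{\lambda,k}(D)$ just defined. If $L=+\infty$ there is nothing to prove. Otherwise fix any $\sigma > \max(L,0)$. I will show that $D$ is $(\lambda,k)$-summable at $s=\sigma$, so that $\sigma^{\lambda,k}(D)\le\sigma$. Letting $\sigma$ decrease to $\max(L,0)$ then gives the bound when $L\ge 0$. The case $L<0$ is handled at the end.

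\textbf{Abel summation in Riesz form.} Write $A(x)=S^{\lambda,k}_x(D)(0)=\sum_{\lambda_n\le x}a_n(x-\lambda_n)^k$, so that $|A(x)|\le C_\varepsilon x^k e^{(L+\varepsilon)x}$ for large $x$. The aim is to express $S^{\lambda,k}_x(D)(\sigma)$ through $A$. For integer $k$ one can use the classical identity obtained by differentiating $k$ times in the variable $x$ (Hardy--Riesz Lemma 5/Theorem 24), but a cleaner uniform route is via Laplace transforms. I start from the formal identity
\[\int_0^\infty e^{-wx}S^{\lambda,k}_x(D)(\sigma)\,{\rm d}x=\Gamma(k+1)\sum_n a_ne^{-\lambda_n(\sigma+w)}w^{-k-1},\]
and compare it with the corresponding identity for $A$ at the shifted variable $\sigma+w$. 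This yields the convolution representation
\[S_x^{\lambda,k}(D)(\sigma)=e^{-\sigma x}A(x)+\int_0^x A(y)\,K_\sigma(x-y,y)\,{\rm d}y,\]
where $K_\sigma$ is an explicit kernel built from $e^{-\sigma y}$ and derivatives of $(x-y)^k$. Concretely, $(x-\lambda_n)^k e^{-\sigma\lambda_n}$ is written as an integral of $(y-\lambda_n)^k$ against a kernel in $y\in[\lambda_n,x]$, using $e^{-\sigma \lambda_n}=e^{-\sigma x}+\sigma\int_{\lambda_n}^x e^{-\sigma y}{\rm d}y$ iterated together with Lemma \ref{technical-0}-type fractional integration for non-integral $k$.

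\textbf{Main obstacle.} I expect the main difficulty to be the non-integral order $k$. Here $(x-\lambda_n)^k$ cannot be produced from $(y-\lambda_n)^k$ by finitely many integrations, so the kernel involves a fractional integral with an $(x-y)^{\varepsilon-1}$ singularity. I would handle it with Lemma \ref{technical-0} by writing $k=m+\varepsilon$ and working at order $m$ inside the integral. The singular part would then be controlled by Lemma \ref{technical}, which bounds the truncated fractional integrals by $2\sup_{y\le\xi}|S^{\lambda,k}_y|$. This makes it possible to bound everything by suprema of $|A|$.

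\textbf{Convergence.} Given the representation, divide by $x^k$. The first term $x^{-k}e^{-\sigma x}A(x)$ tends to $0$ because $\sigma>L+\varepsilon$. In the integral term the kernel decays like $e^{-\sigma y}$ times polynomial factors, while $|A(y)|\le C y^k e^{(L+\varepsilon)y}$. Dominated convergence therefore shows that $x^{-k}\int_0^x\cdots$ converges to a finite limit, namely the Laplace-type integral $\int_0^\infty A(y)(\cdots)e^{-\sigma y}{\rm d}y$ suitably normalized. So $R^{\lambda,k}_x(D)(\sigma)$ converges.

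\textbf{The case $L<0$.} Here $R^{\lambda,k}_x(D)(0)\to0$, so $D$ is summable at $0$ and $A$ is bounded by $o(x^k)$ times $e^{-cx}$. The same representation then works for negative $\sigma>L$: one uses the partial summation in the ``tail'' form $A(\infty)-A(x)$, exactly as in the classical Cahen formula for negative abscissa.
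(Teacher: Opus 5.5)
\textbf{The argument is sound for integer $k$ but has a genuine gap for non-integer $k$, and the case $L<0$ fails for non-integer $k$.}

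\textbf{Integer $k$.} Here the kernel is $e^{-\sigma y}$ times a polynomial of degree $\le k$ in $x-y$. Dominated convergence then gives the limit for every real $\sigma>L$. So for integer $k$ the case $L<0$ needs no tail form at all.

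\textbf{Non-integer $k$: the missing step.} This case is the whole difficulty. Your Convergence step needs a pointwise representation $S^{\lambda,k}_x(D)(\sigma)=e^{-\sigma x}A(x)+\int_0^x A(y)K_\sigma(x,y)\,{\rm d}y$ with $|K_\sigma(x,y)|\le C(1+x)^k e^{-\sigma y}$ and $x^{-k}K_\sigma(x,y)$ convergent. You assert this but never produce it, and the route you sketch cannot deliver it:
\begin{itemize}
\item Passing through Lemma~\ref{technical-0} to order $m=\floor{k}$ brings in $S^{\lambda,m}_y(D)(0)$, on which the hypothesis gives no control.
\item Lemma~\ref{technical} only bounds truncated integrals $\int_0^\xi S^{\lambda,m}_y(x-y)^{\varepsilon-1}\,{\rm d}y$ by $2\sup_{y\le\xi}|S^{\lambda,k}_y|$. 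That yields at best upper bounds, not the existence of $\lim_{x\to\infty} R^{\lambda,k}_x(D)(\sigma)$.
\end{itemize}
The premise is also off: the kernel acting on $A$ itself has no singularity.

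\textbf{How to close it.} Carry out your own Laplace comparison. It gives $\mathcal{L}[S^{\lambda,k}_{\cdot}(D)(\sigma)](w)=(1+\sigma/w)^{k+1}\,\mathcal{L}[e^{-\sigma\cdot}A](w)$. Hence $K_\sigma(x,y)=e^{-\sigma y}\kappa_\sigma(x-y)$ with $\kappa_\sigma(z)=\sum_{j\ge1}\binom{k+1}{j}\frac{\sigma^j z^{j-1}}{(j-1)!}=(k+1)\sigma\,{}_1F_1(-k;2;-\sigma z)$, which is entire. This identity is rigorous, not just formal: for fixed $x$ it involves only finitely many terms, so it suffices to check it on a single exponential, where all transforms converge. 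What you must then prove is that for $\sigma>0$ one has $|\kappa_\sigma(z)|\le C(1+z)^k$ and $z^{-k}\kappa_\sigma(z)\to\sigma^{k+1}/\Gamma(k+1)$. This follows from the standard large-argument asymptotics of ${}_1F_1$, and it is exactly the kernel estimate the Hardy--Riesz proof supplies. With it, your dominated convergence argument goes through for $\sigma>\max(L,0)$.

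\textbf{The case $L<0$.} Your paragraph is wrong for $k\notin\N_0$.
\begin{itemize}
\item For $\sigma<0$ the same kernel satisfies $|\kappa_\sigma(z)|\asymp e^{|\sigma|z}z^{-k-2}$ as $z\to\infty$, so the representation gives no control at all.
\item More fundamentally, no argument can cover this case when $D\neq0$. Suppose $L<0$. Then $D$ is $(\lambda,k)$-summable at $0$. Let $F$ be its limit function on $\C_0$. Proposition~\ref{consistent} gives $\Gamma(k+1)F(s)=s^{k+1}\mathcal{L}A(s)$ on $\C_0$, where $\mathcal{L}A(s)=\int_0^\infty e^{-sx}A(x)\,{\rm d}x$ is holomorphic on $\C_L$.
\item Summability at some negative point would make $F$ holomorphic in a disc around $0$. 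Near $0$ this would express $s^{k+1+p-q}$, with $p,q$ the vanishing orders of $\mathcal{L}A$ and $F$ at $0$, as a holomorphic function. That is impossible for non-integer exponent unless $\mathcal{L}A\equiv0$, which forces $A\equiv0$ and hence $D=0$.
\end{itemize}
So for non-integer $k$ the inequality in the regime $L<0$ can hold only if no nonzero $D$ has $L<0$, and your sketch does not address this. The paper only uses the consequence $L\le 0\Rightarrow\sigma^{\lambda,k}(D)\le 0$, which is your case $\sigma>\max(L,0)$.
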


Let $\lambda$ be a frequency and  $k \geq 0$.
Recall from the introduction that  $\H{k}$ denotes the space consisting of all $D \in \mathcal{D}(\lambda)$ for which there is $l \geq0$ such that  $\sigma^{\lambda,l}(D) < \infty$ and the limit function
$$
f(s) = \lim_{x \to \infty}R^{\lambda,l}_x(D)(s), \qquad s \in \C_{\sigma^{\lambda,l}(D) },
$$
has a (unique)  holomorphic extension to $\C_0$, which we still denote by $f$, satisfying
$$
\| f \|_{k} = \sup_{s \in \C_0} \frac{|f(s)|}{(1+|s|)^k} < \infty.
$$
By Proposition \ref{con-order}, the function $f$ is independent of $l$. Henceforth, we write $f = f_D$ and 
$\| D \|_{\H k} = \| f_D \|_{k}$.

\begin{remark}\label{remid}
Let $\lambda$ be a frequency and $k \geq 0$. Following \cite{finorder, bohrpoly}, we define $H^{\lambda}_{\infty,k}[\operatorname{Re} >0]$ as the space consisting of all holomorphic functions $f$ on $\C_0$ satisfying $\| f\|_k < \infty$ and for which there exists $D \in \mathcal{D}(\lambda)$ and $l \geq 0$ such that $\sigma^{\lambda,l}(D) < \infty$  and
$$
f(s) = \lim_{x \to \infty}R^{\lambda,l}_x(D)(s), \qquad s \in \C_{\sigma^{\lambda,l}(D) }.
$$
Then,
\begin{equation}
\label{isometry}
\H k \to (H^{\lambda}_{\infty,k}[\operatorname{Re} >0], \| \cdot \|_k), \, D \mapsto f_D,
\end{equation}
is an isometric linear surjection. Later on, in Remark \ref{Banach}(i), we will see that \eqref{isometry} is in fact an isometric isomorphism, or, what amounts to the same, that $\| \cdot \|_{\H k}$ is a norm on $\H k$. Hence, we are actually  working with the same spaces as in \cite{finorder, bohrpoly}; the only difference is that we take the Dirichlet series themselves, rather than their limiting functions, as the primary objects. We believe this point of view is more natural for our purposes.
\end{remark}

\section{A generalization of the Saksman vertical convolution formula}\label{sect-2}
In this section, we prove a  generalization of the Saksman vertical convolution formula \cite[Theorem 2.4]{bayart} adapted to the spaces $\H k$; see Helson's general summability theorem \cite[Chapter 4]{helson} for related work. This will be one of our main tools in the rest of this paper. We fix the constants as follows in the Fourier transform
$$ 
\mathcal{F}(\varphi)(t) = \widehat{\varphi}(t) =  \int_{-\infty}^\infty \varphi(x) e^{-it x} {\rm d}x, \qquad \varphi \in L^1(\R).
$$

\begin{proposition}\label{saksmanth}
Let $\lambda$ be a frequency and $k \geq 0$. Let  $\varphi \in C_c(\R)$ be such that
 \begin{equation}
  \label{saksman-cond}
    \int_{-\infty}^\infty |\widehat{\varphi}(t)|(1+\abs{t})^k {\rm d}t<\infty.
     \end{equation}
For every  $D \in \H k$, it holds that
  \begin{equation}
  \label{saksman}
  \sum_{n=1}^\infty a_n(D) \varphi(\lambda_n) e^{-\lambda_n s}= \frac{1}{2\pi}\int_{-\infty}^{\infty} f_D(s+it)\widehat{\varphi}(-t) {\rm d}t, \qquad s \in \C_0.
  \end{equation}
\end{proposition}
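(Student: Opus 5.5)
We prove \eqref{saksman} first for smooth $\varphi$ with rapidly decaying Fourier transform and for $\operatorname{Re} s$ large, then remove both restrictions: the smoothness by a mollification argument that uses \eqref{saksman-cond}, the restriction on $s$ by analytic continuation.

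\emph{Step 1: the easy side.} Since $\varphi$ has compact support and $\lambda_n \to \infty$, the left-hand side of \eqref{saksman} is a finite exponential sum, hence entire in $s$. For the right-hand side, $|f_D(s+it)| \leq \|D\|_{\H k}(1+|s|+|t|)^k$, and \eqref{saksman-cond} gives an integrable majorant locally uniformly in $s \in \C_0$. So the right-hand side is holomorphic on $\C_0$ by dominated convergence and Morera. By the identity theorem, it suffices to prove \eqref{saksman} for $\operatorname{Re} s > \sigma_1$, with $\sigma_1$ as large as we like.

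\emph{Step 2: smooth $\varphi$ and large $\operatorname{Re} s$.} Fix $l \geq 0$ and $\theta$ with $D$ being $(\lambda,l)$-summable on $\C_\theta$. Assume first that $\varphi \in C_c^\infty(\R)$, so $\widehat\varphi$ decays faster than any power. For a single exponential, Fourier inversion gives
\[
\frac{1}{2\pi}\int_{-\infty}^{\infty} e^{-\lambda_n(s+it)}\widehat\varphi(-t)\,{\rm d}t=\varphi(\lambda_n)e^{-\lambda_n s}.
\]
Applying this to the finite sum $R^{\lambda,l}_x(D)$ yields
\[
\frac{1}{2\pi}\int_{-\infty}^{\infty} R^{\lambda,l}_x(D)(s+it)\,\widehat\varphi(-t)\,{\rm d}t=\sum_{\lambda_n\le x}a_n(D)\Bigl(1-\frac{\lambda_n}{x}\Bigr)^l\varphi(\lambda_n)e^{-\lambda_n s}.
\]
As $x\to\infty$ the right side tends to the left-hand side of \eqref{saksman}, because only the finitely many $n$ with $\lambda_n\in\supp\varphi$ contribute. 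On the left, $R^{\lambda,l}_x(D)(s+it)\to f_D(s+it)$ pointwise. It remains to justify dominated convergence, which needs a bound on $|R^{\lambda,l}_x(D)(\sigma+it)|$ that is uniform in $x$ and polynomial in $|t|$, for fixed $\sigma>\theta+1$.

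To get it, fix $\theta<\theta'<\sigma$. By Proposition \ref{consistent}, $S^{\lambda,l}_y(D)(\theta')=O((1+y)^l)$. Write $e^{-\lambda_n s}=e^{-\lambda_n\theta'}e^{-\lambda_n(s-\theta')}$ and apply Abel/Riesz partial summation (or the Laplace representation in Proposition \ref{consistent}). This should give
\[
\sup_{x>0}|R^{\lambda,l}_x(D)(s)|\le C(1+|s|)^{l+1}.
\]
This bound is the step where the computation must actually be done. The polynomial degree $l+1$ is harmless here, because $\widehat\varphi$ is rapidly decreasing, so dominated convergence applies and \eqref{saksman} holds for smooth $\varphi$.

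\emph{Step 3: general $\varphi$.} For $\varphi\in C_c(\R)$ satisfying \eqref{saksman-cond}, let $\varphi_\varepsilon=\varphi*\rho_\varepsilon$ with $\rho_\varepsilon$ a standard mollifier. Then $\supp\varphi_\varepsilon$ stays in a fixed compact set and $\varphi_\varepsilon\to\varphi$ uniformly. Hence the left-hand sides converge, again as finite sums over a fixed index set. Moreover $\widehat{\varphi_\varepsilon}=\widehat\varphi\,\widehat\rho(\varepsilon\,\cdot)$ with $|\widehat\rho|\le 1$ and $\widehat\rho(\varepsilon t)\to 1$. Thus $|f_D(s+it)\widehat{\varphi_\varepsilon}(-t)|\le \|D\|_{\H k}(1+|s|+|t|)^k|\widehat\varphi(-t)|$, which is integrable by \eqref{saksman-cond}, and dominated convergence passes the identity to the limit. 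Together with Step 1 this proves \eqref{saksman} on $\C_0$. The mollification is exactly what lets us avoid needing the growth of the Riesz means to match the weight $(1+|t|)^k$ in \eqref{saksman-cond}.
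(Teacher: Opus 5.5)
Your argument is correct. Steps 1 and 3 coincide with the paper's proof: both reduce to a right half-plane by holomorphy of both sides, and both mollify and pass to the limit by dominated convergence based on \eqref{saksman-cond}. The genuine difference is Step 2.

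The paper never bounds Riesz means off the real axis. It takes $l\in\N$ (via Proposition \ref{con-order}) and writes $s=s_0+\varepsilon$ with $s_0$ in the half-plane of $(\lambda,l)$-summability. It then expresses the left-hand side as $\frac{(-1)^{l+1}}{l!}\int S^{\lambda,l}_x(D)(s_0)\,\varphi_\varepsilon^{(l+1)}(x)\,{\rm d}x$ with $\varphi_\varepsilon(x)=e^{-\varepsilon x}\varphi(x)$. Finally it applies Plancherel together with the Laplace identity $l!\,f_D(s_0+\varepsilon+it)(\varepsilon+it)^{-l-1}=\mathcal F_x\bigl(e^{-\varepsilon x}S^{\lambda,l}_x(D)(s_0)\bigr)(t)$ from Proposition \ref{consistent}. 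The factor $(\varepsilon+it)^{l+1}$ is cancelled by the $l+1$ derivatives on $\varphi_\varepsilon$. So $f_D$ enters directly, and only the trivial growth $S^{\lambda,l}_x(D)(s_0)=O(x^l)$ is needed.

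Your route applies Fourier inversion to the finite means $R^{\lambda,l}_x(D)$ and then lets $x\to\infty$ under the integral. This is more direct and avoids Plancherel and distributional integration by parts. The cost is the uniform estimate $\sup_{x>0}|R^{\lambda,l}_x(D)(\sigma+it)|\le C(1+|t|)^{l+1}$, which you flagged but left unproved. It does hold, as follows.

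First replace $l$ by $\lceil l\rceil$ using Proposition \ref{con-order}. You did not say this, and the partial summation you have in mind needs an integer order. Then integrate by parts $l+1$ times against $g(y)=(x-y)^l e^{-yw}$, where $w=s-\theta'$, to get
\[
S^{\lambda,l}_x(D)(s)=e^{-xw}S^{\lambda,l}_x(D)(\theta')+\frac{(-1)^{l+1}}{l!}\int_0^x S^{\lambda,l}_y(D)(\theta')\,g^{(l+1)}(y)\,{\rm d}y .
\]
We have $|S^{\lambda,l}_y(D)(\theta')|\le Cy^l$ for all $y>0$, and $|g^{(l+1)}(y)|\le C'\sum_{i=0}^{l}(x-y)^{l-i}|w|^{l+1-i}e^{-y\operatorname{Re}w}$. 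Dividing by $x^l$ and using $(x-y)^{l-i}\le x^{l-i}$ gives $|R^{\lambda,l}_x(D)(s)|\le C''(1+|s|)^{l+1}$ for $x\ge 1$, with $C''$ depending on $\operatorname{Re}w>0$. For $x<1$ the means are finite sums, bounded uniformly in $x$. With this estimate inserted, your proof is complete.
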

\begin{proof}
Fix $D  \in \H k$ and pick $l \geq 0$ such that $\sigma^{\lambda,l}(D) < \infty$.  In view of Proposition \ref{con-order}, we may assume that $l \in \N$.
We first prove \eqref{saksman} for $\varphi \in \mathcal{D}(\R)$. Since both sides of this identity are holomorphic on $\C_0$, it suffices to show it for $s \in \C_{\sigma^{\lambda,l}(D)}$. Write $s = s_0 + \varepsilon$ with  $s_0 \in \C_{\sigma^{\lambda,l}(D)}$ and $\varepsilon >0$.  Set $\varphi_\varepsilon(x)=e^{-\varepsilon x}\varphi(x)\in \mathcal{D}(\R)$. We find that
$$
\sum_{n=1}^\infty a_n(D) \varphi(\lambda_n) e^{-\lambda_n s} = \sum_{n=1}^\infty a_n(D) \varphi_\varepsilon(\lambda_n) e^{-\lambda_n s_0} = \frac{(-1)^{l+1}}{l!} \int_{-\infty}^{\infty} S^{\lambda,l}_x(D)(s_0)\varphi_\varepsilon^{(l+1)}(x)\text{d}x.
$$
Define $S^{\lambda,l}_x(D)(s_0) = 0$ for $x \leq 0$.  Proposition \ref{consistent} yields that
 \[ l !\frac{f_D(s + it)}{(\varepsilon + it)^{l+1}} = l!\frac{f_D(s_0 + \varepsilon + it)}{(\varepsilon + it)^{l+1}}= \mathcal{F}_x(e^{-\varepsilon x}S^{\lambda,l}_{x}(D)(s_0))(t), \qquad  t \in \R.\]    
 Note that
 $$
  \mathcal{F}^{-1}_x(e^{\varepsilon x }\varphi_\varepsilon^{(l+1)}(x))(t)  = (-1)^{l+1}(\varepsilon + it)^{l+1} \mathcal{F}^{-1}(\varphi)(t), \qquad t \in \R.
 $$
 Therefore, by Plancherel's identity,
  \begin{align*}
 \sum_{n=1}^\infty a_n(D) \varphi(\lambda_n) e^{-\lambda_n s} &= \frac{(-1)^{l+1}}{l!} \int_{-\infty}^{\infty} S^{\lambda,l}_x(D)(s_0)\varphi_\varepsilon^{(l+1)}(x)\text{d}x \\
 & =  \frac{(-1)^{l+1}}{l!} \int_{-\infty}^{\infty}  \mathcal{F}_x(e^{-\varepsilon x }S^{\lambda,l}_{x}(D)(s_0))(t)  \mathcal{F}_x^{-1}(e^{\varepsilon x }\varphi_\varepsilon^{(l+1)}(x))(t)\text{d}t \\
&=  \int_{-\infty}^{\infty} f_D(s+it)\mathcal{F}^{-1}(\varphi)(t) {\rm d}t = \frac{1}{2\pi}\int_{-\infty}^{\infty} f_D(s+it)\widehat{\varphi}(-t) {\rm d}t.
\end{align*}
Next, we show the result in general. Take $\varphi \in C_c(\R)$ satisfying \eqref{saksman-cond}. Choose $\psi \in \mathcal{D}(\R)$ with $\int_{-\infty}^\infty \psi(t)\text{d}t=1$ and set $\psi_\varepsilon(x) = \varepsilon^{-1}\psi(x/\varepsilon)$ for $\varepsilon >0$. Then, $\varphi \ast \psi_\varepsilon \in \mathcal{D}(\R)$ for all $\varepsilon >0$. Hence, by what we already have shown,
\[\sum_{n=1}^\infty a_n(D) (\varphi*\psi_\varepsilon)(\lambda_n)e^{-\lambda_n s}=\frac{1}{2\pi}\int_{-\infty}^{\infty} f_D(s+it) \widehat{\varphi}(-t)\widehat{\psi}(-\varepsilon t)\text{d}t, \qquad s \in \C_0.\]
The result now follows by letting $\varepsilon \to 0^+$. 
\end{proof}

\begin{remark}
Bayart's proof of \cite[Theorem 2.4]{bayart} (corresponding to the case $k=0$ above) makes use of the fact that the space of $\lambda$-Dirichlet polynomials is dense in $\H 0$. Our proof is more elementary in that it relies only on the basic Proposition \ref{consistent}; see  also \cite[Remark 2.6]{bayart}.
\end{remark}

\section{ A maximal inequality for higher order Riesz means}\label{sect-3}
As a first application of the Saksman vertical convolution formula, we give in this section a short proof of an explicit maximal inequality for higher order Riesz means, originally shown in \cite[Theorem 3.6]{finorder} and \cite[Remark 3.7]{bohrpoly}. This result will later be used in Section \ref{sect-5} in the proof of our main result.
\begin{theorem}
\label{lkasymp}
Let $\lambda$ be a frequency and $k \geq 0$. For all $l >k$, it holds that
$$
C_{k,l} = \sup_{x >0}\norm{R_x^{\lambda,l}}_{\mathcal{L}(\H k)} < \infty.
$$
Moreover, there is $C >0$ such that, for all $l \in (k,k+1]$,
$$
C_{k,l} \leq \frac{C}{l-k}.
$$
\end{theorem}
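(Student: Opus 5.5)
We apply Proposition \ref{saksmanth} with the Riesz kernel itself as the test function. For fixed $x>0$ and $l>k$ we set
$$
\varphi_x(u) = \Bigl(1-\frac{u}{x}\Bigr)_+^{l}\,\mathbf 1_{[0,\infty)}(u),
$$
truncated at $u=0$. Since only $\lambda_n\ge 0$ matter, we may instead use a kernel that is smooth and compactly supported near $u\le 0$ (equal to $1$ on $[0,\infty)\cap\supp$), for instance $\varphi_x(u)=\chi(u)(1-u/x)_+^l$ with a fixed smooth $\chi$ that equals $1$ on $[0,\infty)$ and vanishes on $(-\infty,-1]$. With this choice $\sum_n a_n(D)\varphi_x(\lambda_n)e^{-\lambda_n s}=R^{\lambda,l}_x(D)(s)$. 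A better option is to let the left cutoff scale with $x$, namely $\chi(u/x)$, so that $\varphi_x(u)=\Phi(u/x)$ with a fixed $\Phi\in C_c(\R)$ given by $\Phi(v)=\chi(v)(1-v)_+^l$. Then $\widehat{\varphi_x}(t)=x\widehat\Phi(xt)$.

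The first step is to check condition \eqref{saksman-cond}. The function $\Phi$ is smooth except at $v=1$, where it behaves like $(1-v)_+^l$. Hence $|\widehat\Phi(\tau)|\lesssim (1+|\tau|)^{-(l+1)}$, and more precisely $|\widehat\Phi(\tau)|\le C_l(1+|\tau|)^{-l-1}$ with $C_l$ bounded for $l\in(k,k+1]$. This is obtained by integrating by parts $\lceil l\rceil$ times and using the explicit Fourier transform of $v_+^{l}e^{-v}$-type singularities, which involve $\Gamma(l+1)(i\tau)^{-l-1}$. Since $l+1-k>1$, condition \eqref{saksman-cond} holds, and Proposition \ref{saksmanth} gives
$$
R^{\lambda,l}_x(D)(s)=\frac{1}{2\pi}\int_{-\infty}^\infty f_D(s+it)\,x\widehat\Phi(-xt)\,{\rm d}t .
$$

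The second step is the estimate. Using $|f_D(s+it)|\le \|D\|_{\H k}(1+|s|+|t|)^k\le \|D\|_{\H k}(1+|s|)^k(1+|t|)^k$, we get
$$
\frac{|R^{\lambda,l}_x(D)(s)|}{(1+|s|)^k}\le \frac{\|D\|_{\H k}}{2\pi}\int |\widehat\Phi(\tau)|\Bigl(1+\frac{|\tau|}{x}\Bigr)^k{\rm d}\tau .
$$
For $x\ge 1$ this is at most $\int|\widehat\Phi(\tau)|(1+|\tau|)^k\,{\rm d}\tau\le C_l\int(1+|\tau|)^{k-l-1}\,{\rm d}\tau= \frac{2C_l}{l-k}$, which is uniform in $x\ge1$ and gives the claimed $C/(l-k)$ dependence.

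For small $x$ the factor $(1+|\tau|/x)^k$ blows up, so the case $x<1$ needs separate treatment, and this is the main obstacle. We expect to handle it directly: for $x<1$, $R^{\lambda,l}_x(D)$ involves only the finitely many $\lambda_n\le 1$. We could bound it by writing $R^{\lambda,l}_x(D)(s)=R^{\lambda,l}_x(R^{\lambda,l}_1 D)$-type expressions, or more simply by using the Saksman formula with the unscaled kernel together with the factor $e^{-\lambda_n s}$. A cleaner route is to note that the bound $|f_D(s+it)|\le \|D\|(1+|s+it|)^k$ can be paired with shifting: apply the formula at the point $s$ while comparing with $s+1$. Since the coefficients satisfy $a_n(D)=\lim$ of Saksman integrals with bump kernels, one gets $|a_n(D)|e^{-\lambda_n\sigma}\lesssim \|D\|(1+|s|)^k$ for $\lambda_n\le1$. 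Because the number of such $n$ is finite but not uniformly controlled, we would instead apply the scaled formula with $\max(x,1)$ in place of $x$ in the scaling, using $\Phi$ adapted so that $(1-u/x)^l_+=\Phi_x(u)$ for a family of kernels $\Phi_x$, $x<1$, of the form $(1-v)_+^l$ evaluated at $v=u/x$ but with the left cutoff at scale $1$. Their Fourier transforms are still $O(x(1+x|t|)^{-l-1})$ up to a term of size $O((1+|t|)^{-N})$ coming from the cutoff. The remaining point to verify is that the singular part at $u=x$ contributes $x^{-l}\cdot|t|^{-l-1}$, which is integrable against $(1+|t|)^k$ with a bound uniform as $x\to0$ after accounting for the factor $x^{-l}$. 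If this fails, we would fall back on deducing the small-$x$ range from the case $l=k+1$ together with Lemma \ref{technical}.
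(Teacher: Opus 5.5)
You handle $x\ge 1$ exactly as the paper does. You take a fixed Helson-type kernel $\Phi$ equal to $(1-v)^l$ on $[0,1]$, with Fourier decay $C_l(1+|\tau|)^{-l-1}$ uniformly for $l\in(k,k+1]$; the paper proves this decay in Lemma \ref{lemmahelson} by writing the kernel as $v_+^le^{-v}$ times a test function and convolving Fourier transforms. You rescale it to $\Phi(\cdot/x)$, apply Proposition \ref{saksmanth}, and bound $\int|\widehat\Phi(\tau)|(1+|\tau|/x)^k\,{\rm d}\tau$ by $\max\{1,x^{-k}\}\,2C_l/(l-k)$.

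\textbf{The gap is small $x$.} You leave it open, and the repair you actually propose cannot work. The singular part you identify, of size $x^{-l}|t|^{-l-1}$, integrates against $(1+|t|)^k$ over $|t|\ge 1$ to something of order $x^{-l}/(l-k)$, which is not uniform as $x\to0$. No other cutoff helps. For integer $k$, say, any $\varphi$ agreeing with $(1-u/x)^l$ on $[0,x]$ satisfies
$\int|\widehat\varphi(t)|(1+|t|)^k\,{\rm d}t\ge 2\pi\|\varphi^{(k)}\|_\infty\ge 2\pi\, l(l-1)\cdots(l-k+1)\,x^{-k}$.
So the $x^{-k}$ blow-up is intrinsic to the kernel method, and small $x$ has to be handled using the frequency $\lambda$ itself. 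Your fallback through order $k+1$ and Lemma \ref{technical} is not carried out, and it meets the same $x^{-k}$ at order $k+1$.

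\textbf{The missing observation is elementary.} $\lambda$ is fixed, and the constants in the theorem may depend on $\lambda$; only the dependence on $l$ is quantified.
\begin{itemize}
\item If $\lambda_1>0$, then $R^{\lambda,l}_x=0$ for $x<\lambda_1$. For $x\ge\lambda_1$ the factor $\max\{1,x^{-k}\}$ is at most $\max\{1,\lambda_1^{-k}\}$.
\item If $\lambda_1=0$, then for $0<x\le\lambda_2$ one has $R^{\lambda,l}_x(D)=a_1(D)=R^{\lambda,l}_{\lambda_2}(D)$, using $l>0$. Hence the supremum over $x>0$ equals the supremum over $x\ge\lambda_2>0$.
\end{itemize}
This is what the paper's ``This implies the result'' tacitly uses.

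The route you discarded also works. For $x<1$ one has $|R^{\lambda,l}_x(D)(s)|\le\sum_{\lambda_n<1}|a_n(D)|$, and the index set is a fixed finite set. Each $|a_n(D)|\le C\varepsilon_n^{-k}\|D\|_{\H k}$ by the bump-kernel Saksman argument (Theorem \ref{coeff_bounds}, whose proof uses only Proposition \ref{saksmanth}). This bound is even independent of $l$. Your objection that the number of such $n$ is ``not uniformly controlled'' is unfounded, since nothing needs to be uniform in $\lambda$.
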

In view of  the Borh-Cahen formula (Proposition \ref{Bohr-Cahen}), Theorem \ref{lkasymp} yields the following corollary, originally due to Hardy and Riesz \cite[Theorem 41]{hardy}; see also \cite[Section 5.1]{helson}. 
\begin{corollary}
Let $\lambda$ be a frequency and  $l > k \geq 0$. For every  $D \in \H k$, it holds that $\sigma^{\lambda,l}(D) \leq 0$.
\end{corollary}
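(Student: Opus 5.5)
The statement to prove is the Corollary: $\sigma^{\lambda,l}(D) \le 0$ for every $D \in \H k$ and every $l > k$. The plan is to combine the uniform bound of Theorem \ref{lkasymp} with the Bohr--Cahen formula (Proposition \ref{Bohr-Cahen}) applied to horizontal translates of $D$. The coefficient $a_n(D)$ is not an input; only the Riesz means at a single point are needed.

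Fix $D \in \H k$, $l > k$ and $\varepsilon > 0$. Let $D_\varepsilon = \sum_n a_n(D) e^{-\lambda_n \varepsilon} e^{-\lambda_n s}$ be the translate of $D$. I first check that $D_\varepsilon \in \H k$. Since $D$ is $(\lambda,l_0)$-summable on some $\C_\theta$, $D_\varepsilon$ is $(\lambda,l_0)$-summable on $\C_{\theta-\varepsilon}$. Its limit function is $f_D(\cdot + \varepsilon)$, which is holomorphic on $\C_{-\varepsilon} \supset \C_0$. Moreover, for $s \in \C_0$,
\[
|f_D(s+\varepsilon)| \le \|D\|_{\H k}(1+|s|+\varepsilon)^k \le (1+\varepsilon)^k \|D\|_{\H k}(1+|s|)^k .
\]
Hence $\|D_\varepsilon\|_{\H k} \le (1+\varepsilon)^k \|D\|_{\H k} < \infty$.

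Next I evaluate at a point of $\C_0$. The Riesz mean operators satisfy $R_x^{\lambda,l}(D)(\varepsilon + s) = R_x^{\lambda,l}(D_{\varepsilon/2})(\varepsilon/2 + s)$. Theorem \ref{lkasymp}, applied to $D_{\varepsilon/2}$, gives
\[
\sup_{x>0} \bigl|R_x^{\lambda,l}(D)(\varepsilon)\bigr| \le \sup_{x>0} \bigl\|f_{R_x^{\lambda,l}(D_{\varepsilon/2})}\bigr\|_k \, (1+\varepsilon/2)^k \le C_{k,l}(1+\varepsilon/2)^{2k}\|D\|_{\H k}.
\]
Here I use that a Dirichlet polynomial's $\H k$-function is the polynomial itself, evaluated at $\varepsilon/2 \in \C_0$. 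The bound is finite and independent of $x$.

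Finally, apply Proposition \ref{Bohr-Cahen} to $D_\varepsilon$. Its Riesz means at $0$ are exactly $R_x^{\lambda,l}(D)(\varepsilon)$, which are bounded, so
\[
\sigma^{\lambda,l}(D_\varepsilon) \le \limsup_{x \to \infty} \frac{\log|R_x^{\lambda,l}(D)(\varepsilon)|}{x} \le 0 .
\]
Since $D$ is $(\lambda,l)$-summable at $s$ if and only if $D_\varepsilon$ is $(\lambda,l)$-summable at $s-\varepsilon$, we get $\sigma^{\lambda,l}(D) = \sigma^{\lambda,l}(D_\varepsilon) + \varepsilon \le \varepsilon$. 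Letting $\varepsilon \to 0^+$ gives $\sigma^{\lambda,l}(D) \le 0$.

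The main point to be careful about is that the operator norm in Theorem \ref{lkasymp} controls pointwise values. This holds because $\|\cdot\|_k$ is a weighted sup-norm over $\C_0$, and because the $\H k$-function of a Dirichlet polynomial is its pointwise sum. The rest is routine.
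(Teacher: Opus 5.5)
Your proof is correct and follows the paper's route: the corollary comes from combining the uniform bound of Theorem \ref{lkasymp} with the Bohr--Cahen formula (Proposition \ref{Bohr-Cahen}). The translation by $\varepsilon$ (and the passage to $D_{\varepsilon/2}$) is valid but not needed. Since $R_x^{\lambda,l}(D)$ is a Dirichlet polynomial and hence continuous on $\overline{\C_0}$, one gets directly $|R_x^{\lambda,l}(D)(0)| \le \|R_x^{\lambda,l}(D)\|_{\H k} \le C_{k,l}\|D\|_{\H k}$, which is exactly how the paper argues at the end of the proof of Theorem \ref{main-article}.
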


We now turn to the proof of Theorem \ref{lkasymp}. The following lemma, essentially due to Helson \cite[p.\ 72, Lemma]{helson}, provides the key step. Since we will need certain bounds that are not explicitly stated there, we include  the proof.
\begin{lemma}\label{lemmahelson}
Let $l > 0$. There exists $\varphi \in C_c(\R)$ satisfying the following properties: 
\begin{itemize}
\item[(i)] $\varphi(x) = (1-x)^l$ for $x \in [0,1]$.
\item[(ii)] $\operatorname{supp} \varphi \subseteq (-\infty,1]$.
\item[(iii)] It holds that
$$
C_l = \sup_{t \in \R}|\widehat{\varphi}(t)|(1+|t|)^{l+1} < \infty. 
$$
\end{itemize}
Moreover, $\displaystyle \sup_{r \in [l,l+1]} C_{r} < \infty$.
\end{lemma}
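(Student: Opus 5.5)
We have to construct $\varphi$ with $\varphi(x)=(1-x)^l$ on $[0,1]$, vanishing for $x\ge 1$, compactly supported, and with $\widehat\varphi(t)=O(|t|^{-l-1})$, uniformly for $l$ in compact sets. The only non-smooth points we cannot avoid are at $x=1$, where $(1-x)_+^l$ has an algebraic singularity, and at $x=0$, where we are free to extend. The plan is to extend to the left of $0$ smoothly and localize the singularity at $1$.

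\textbf{Construction.} Fix once and for all $\chi\in\mathcal{D}(\R)$ with $0\le\chi\le1$, $\chi=1$ on $[-1/2,3/2]$ and $\supp\chi\subseteq[-1,2]$. Set
\[
\varphi(x)=\chi(x)\,|1-x|^l\,\mathbf{1}_{(-\infty,1]}(x)=\chi(x)(1-x)_+^l .
\]
The function $(1-x)_+^l$ is smooth on $(-\infty,1)$. Hence $\varphi\in C_c(\R)$, and (i) and (ii) hold, since $\chi=1$ on $[0,1]$ and $\varphi=0$ for $x\ge1$.

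\textbf{Fourier decay.} Split $\varphi=\varphi_1+\varphi_2$, where $\varphi_1(x)=\eta(x)(1-x)_+^l$ and $\eta\in\mathcal{D}(\R)$ satisfies $\eta=1$ near $1$ with support in $(1/2,3/2)$. Then $\varphi_2=(\chi-\eta)(1-x)_+^l$ belongs to $\mathcal{D}(\R)$, because it vanishes near $1$ and is smooth elsewhere. So $\widehat{\varphi_2}$ decays faster than any power, and its Schwartz seminorms are bounded uniformly for $l$ in a compact set, since derivatives of $(1-x)^l$ on $\supp(\chi-\eta)\cap(-\infty,1/2]$ are bounded uniformly in $l$.

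For $\varphi_1$, translate to $x\mapsto 1-x$. This reduces the problem to the classical fact that $g(y)=\eta(1-y)\,y_+^l$ has $\widehat g(t)=O(|t|^{-l-1})$. To prove it, take $m=\lceil l\rceil+1$ and integrate by parts. Alternatively, use the standard scaling argument: write $\widehat g(t)=\int_0^\infty y^l\rho(y)e^{-ity}\,dy$ with $\rho$ smooth and compactly supported, $\rho=1$ near $0$. Rotate the contour to the ray $y=-i\,\mathrm{sgn}(t)\,u$ for the pure part $\int_0^\infty y^l e^{-\epsilon y}e^{-ity}dy=\Gamma(l+1)(\epsilon+it)^{-l-1}$. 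Then write
\[
y^l\rho(y)=y^le^{-y}+y^l\bigl(\rho(y)-e^{-y}\bigr).
\]
The first term gives exactly $\Gamma(l+1)(1+it)^{-l-1}$. The second term is $y^{l+1}h(y)$ with $h$ smooth on $[0,\infty)$ and rapidly decaying. Iterating this Taylor peeling of $\rho(y)e^{y}$ at $0$ up to order $N>l+2$ leaves a remainder $y^{l+N}h_N(y)$, which is $C^{N}$ on $[0,\infty)$ with vanishing derivatives up to order $\lfloor l\rfloor+N-1$ at $0$. Integrating by parts $l+2$ times (taking the integer part) then shows its transform is $O(|t|^{-l-2})$. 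The finitely many peeled terms $c_j\Gamma(l+j+1)(1+it)^{-l-j-1}$ are all $O(|t|^{-l-1})$. Combining these pieces gives (iii).

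\textbf{Uniformity in $r\in[l,l+1]$.} Every constant above is explicit: $\Gamma(r+j+1)$, the Taylor coefficients $c_j$ (independent of $r$), and sup-norms of derivatives of $y^{r+N}h_N(y)$ on compact sets. Each depends continuously on $r$, so all of them are bounded on $[l,l+1]$ with a fixed $N>l+3$. This yields $\sup_{r\in[l,l+1]}C_r<\infty$.

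\textbf{Main obstacle.} The delicate step is the remainder estimate near $y=0$: we must check that the integrations by parts produce no boundary terms, and that the bounds on $\frac{d^j}{dy^j}\bigl(y^{r+N}h_N\bigr)$ are locally integrable and uniform in $r$. I would handle this by choosing $N$ large enough that $r+N-(l+3)>0$ throughout the range, so every derivative used is continuous and vanishes at $0$.
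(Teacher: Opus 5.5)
Your argument is correct. Your $\varphi$ is essentially the paper's (a fixed cutoff times $(1-x)_+^l$), and the key explicit input is the same, namely $\mathcal{F}(y_+^le^{-y})(t)=\Gamma(l+1)(1+it)^{-l-1}$. What differs is how you transfer this decay to $y_+^l\rho(y)$.

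The paper does it multiplicatively in one step. It writes $y_+^l\psi(y)=(y_+^le^{-y})\cdot\kappa(y)$ with $\kappa=e^{y}\psi\in\mathcal{D}(\R)$, so $\mathcal{F}(y_+^l\psi)$ is $\frac{1}{2\pi}$ times the convolution of $\Gamma(l+1)(1+i\,\cdot)^{-l-1}$ with $\widehat\kappa$. Peetre's inequality $1+|t|\le(1+|v-t|)(1+|v|)$ then gives, for all $t$,
\[
|\widehat\varphi(t)|\le\frac{2^{(l+1)/2}\Gamma(l+1)}{2\pi(1+|t|)^{l+1}}\int_{-\infty}^{\infty}|\widehat\kappa(u)|(1+|u|)^{l+1}\,{\rm d}u .
\]
This constant is visibly continuous in $l$, so uniformity over $r\in[l,l+1]$ comes for free. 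No boundary terms or remainder regularity need checking.

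Your additive route (Taylor peeling of $\rho(y)e^{y}$ at $0$, explicit Gamma terms, integration by parts on the flat remainder $y^{r+N}h_N(y)$) requires exactly the bookkeeping you flag. In return it gives a full asymptotic expansion of $\widehat\varphi(t)$, which shows in particular that the exponent $l+1$ is sharp.

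A few small points, none fatal:
\begin{itemize}
\item The aside ``take $m=\lceil l\rceil+1$ and integrate by parts'' fails as stated for non-integer $l$. There $g^{(m)}\sim y^{l-m}$ is not integrable at $0$, which is why the peeling is needed.
\item $\lfloor l\rfloor+2$ integrations by parts give $O(|t|^{-\lfloor l\rfloor-2})$, not $O(|t|^{-l-2})$. This still suffices, since $\lfloor l\rfloor+2>l+1$.
\item For $\varphi_2$ the relevant set is $\supp(\chi-\eta)\cap(-\infty,1)$, not $\cap(-\infty,1/2]$. It still stays at positive distance from $1$, so your uniform derivative bounds hold.
\item The split $\varphi=\varphi_1+\varphi_2$ is unnecessary, since $\chi(1-y)=1$ near $y=0$ already.
\item To pass from $O(|t|^{-r-1})$ to the supremum defining $C_r$, add the trivial bound $|\widehat\varphi(t)|\le\|\varphi\|_1$ for small $|t|$.
\end{itemize}
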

\begin{proof}
Pick $\psi \in \mathcal{D}((-1,2))$ with $\psi(x) =1$ for $x \in [0,1]$. Then, $\varphi(x) = (1-x)^l_+\psi(1-x) \in C_c(\R)$ satisfies (i) and (ii). We now show (iii). Note that
$$
\int_{0}^\infty x^l e^{-sx} {\rm d} x = \frac{\Gamma(l+1)}{s^{l+1}}, \qquad s >0.
$$
By analytic continuation, this identity in fact holds for all $s \in \C_0$. For $s = 1+it$, $t \in \R$, we find that
$$
\mathcal{F}_x(x^l_+ e^{-x})(t) =  \frac{\Gamma(l+1)}{(1+it)^{l+1}}.
$$
 Set $\kappa(x) = e^x\psi(x) \in \mathcal{D}(\R)$. Then, for all $t \in \R$,
\begin{align*}
|\widehat\varphi(t)| &= |\mathcal{F}_x(x^l_+\psi(x))(-t)| = \frac{1}{2\pi}\abs{(\mathcal{F}_x(x^l_+e^{-x}) \ast \widehat{\kappa})(-t)} \\
&\leq \frac{2^{(l+1)/2}\Gamma(l+1)}{2\pi}\int_{-\infty}^\infty \frac{|\widehat{\kappa}(u+t)|}{(1+|u|)^{l+1}} {\rm d}u \\
&\leq \frac{2^{(l+1)/2}\Gamma(l+1)}{2\pi(1+|t|)^{l+1}}\int_{-\infty}^{\infty}|\widehat{\kappa}(u)|{(1+|u|)^{l+1}} {\rm d}u.
\end{align*}
This proves (iii) and also $\displaystyle \sup_{r \in [l,l+1]} C_{r} < \infty$.
    \end{proof}

\begin{proof}[Proof of Theorem \ref{lkasymp}]  Let $\varphi$ be as in Lemma \ref{lemmahelson} and set $\varphi_x(y) = \varphi(y/x)$ for $x >0$. Then, for all $D \in \H k$,
$$
R_x^{\lambda,l}(D)(s)  = \sum_{n = 1}^\infty a_n(D) \varphi_x(\lambda_n)e^{-\lambda_n s}, \qquad s \in \C_0, x >0.
$$
By the Saksman vertical convolution formula (Proposition \ref{saksmanth}), we find that
\begin{align*}
|R_x^{\lambda,l}(D)(s)| &= \left |  \frac{1}{2\pi}\int_{-\infty}^{\infty} f_D(s+it)\widehat{\varphi}_x(-t) {\rm d}t \right|  \\
&{\leq}   \frac{x}{2\pi} \| D\|_{\H k}(1+|s|)^k \int_{-\infty}^{\infty}(1+|t|)^k |\widehat{\varphi}(-xt)| {\rm d}t   \\
&=   \frac{1}{2\pi} \| D\|_{\H k}(1+|s|)^k\int_{-\infty}^{\infty} \left(1+\frac{|t|}{x}\right)^k |\widehat{\varphi}(t)| {\rm d}t   \\
&\leq   \frac{C_l \max\{1,x^{-k} \}}{2\pi} \| D\|_{\H k}(1+|s|)^k\int_{-\infty}^{\infty}\frac{1}{(1+|t|)^{1+l-k}} {\rm d}t \\
&=   \frac{C_l\max\{1,x^{-k} \}}{\pi(l-k)} \| D\|_{\H k}(1+|s|)^k.
\end{align*}
This implies the result.
\end{proof}

\section{Coefficient bounds}\label{sect-4}
In this section, we apply the Saksman vertical convolution formula to prove new growth estimates for the coefficients of elements of the spaces $\H k$, thereby improving those from \cite[Theorem 3.12]{finorder}.  
Moreover, we prove that these bounds are optimal if $k \leq 1$ or under a mild uniformity condition on the frequency $\lambda$. 

Given a frequency $\lambda$, we define $\varepsilon_1(\lambda)=\min\{1,\lambda_2-\lambda_{1}\}$ and
$$
\varepsilon_n(\lambda)=\min\{1,\lambda_n-\lambda_{n-1}, \lambda_{n+1}-\lambda_n\}, \qquad  n \geq 1.
$$ 
If the frequency $\lambda$ is clear from the context, we will simply write $\varepsilon_n = \varepsilon_n(\lambda)$, $n \in \N$.
\begin{theorem} 
\label{coeff_bounds}
Let $\lambda$ be a frequency and  $k \geq 0$. There exists $C >0$ such that, for all $D \in \H k$,
$$
|a_n(D)| \leq C\norm{D}_{\H k}  \left(\frac{1}{\varepsilon_n}\right)^k, \qquad n \in \N.
$$
\end{theorem}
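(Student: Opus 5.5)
We plan to isolate the coefficient $a_n(D)$ with the Saksman vertical convolution formula (Proposition \ref{saksmanth}), using a bump function adapted to the gap around $\lambda_n$. Fix a function $\chi \in \mathcal{D}(\R)$ with $\supp \chi \subseteq (-1,1)$ and $\chi(0)=1$. For $n \in \N$ set
$$
\varphi_n(x) = \chi\left(\frac{x-\lambda_n}{\varepsilon_n}\right).
$$
Since $\varepsilon_n \leq \min\{\lambda_n-\lambda_{n-1},\lambda_{n+1}-\lambda_n\}$ (with the obvious modification for $n=1$), the support of $\varphi_n$ lies in $(\lambda_n-\varepsilon_n,\lambda_n+\varepsilon_n)$. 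This interval contains no $\lambda_m$ with $m \neq n$. Hence
$$
\sum_{m=1}^\infty a_m(D)\varphi_n(\lambda_m)e^{-\lambda_m s} = a_n(D)e^{-\lambda_n s}.
$$
Because $\widehat{\chi}$ is a Schwartz function, condition \eqref{saksman-cond} holds for every $k$. Proposition \ref{saksmanth} therefore applies on $\C_0$.

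The next step is a scaling computation. We have $\widehat{\varphi_n}(t) = \varepsilon_n e^{-i\lambda_n t}\widehat{\chi}(\varepsilon_n t)$. Fix $s=\sigma\in(0,1]$ real, for instance $\sigma = 1$. Then
$$
|a_n(D)|e^{-\lambda_n \sigma} \leq \frac{\varepsilon_n}{2\pi}\norm{D}_{\H k}\int_{-\infty}^\infty (1+|\sigma+it|)^k|\widehat{\chi}(\varepsilon_n t)|\,{\rm d}t.
$$
Substituting $u=\varepsilon_n t$ and using $(1+|\sigma|+|u|/\varepsilon_n)^k \leq (2+|u|)^k\varepsilon_n^{-k}$, which holds because $\varepsilon_n\le 1$, we obtain
$$
|a_n(D)| \leq C\,e^{\lambda_n\sigma}\norm{D}_{\H k}\,\varepsilon_n^{-k}, \qquad C=\frac{1}{2\pi}\int(2+|u|)^k|\widehat\chi(u)|\,{\rm d}u.
$$

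The main obstacle is the factor $e^{\lambda_n\sigma}$. It prevents uniformity in $n$, and we cannot simply let $\sigma\to0$, since the proposition only covers $s\in\C_0$. The fix is to take $\sigma = \sigma_n \to 0^+$, which is legitimate because the estimate above holds for every $\sigma\in(0,1]$. For fixed $n$, letting $\sigma\to0^+$ gives $|a_n(D)|\le C\norm{D}_{\H k}\varepsilon_n^{-k}$, since the bound on $(1+|\sigma+it|)^k$ was uniform for $\sigma\in(0,1]$. So the exponential factor disappears after this limit, and the constant $C$ depends only on $k$ and $\chi$, not on $n$ or $D$. This yields the theorem.

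Two points remain to be verified carefully. First, the support claim for $n=1$: there $\varepsilon_1=\min\{1,\lambda_2-\lambda_1\}$, and no frequency lies below $\lambda_1$, so the claim holds. Second, the inequality $|f_D(\sigma+it)|\le \norm{D}_{\H k}(1+|\sigma|+|t|)^k$, which follows directly from the definition of $\norm{\cdot}_k$.
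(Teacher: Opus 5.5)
Your proposal is correct and follows the paper's proof essentially verbatim. Both arguments use the same rescaled bump $\varphi_n(x)=\chi((x-\lambda_n)/\varepsilon_n)$ to isolate $a_n(D)$ via Proposition \ref{saksmanth}, the same substitution $u=\varepsilon_n t$ together with $\varepsilon_n\le 1$, and the same final limit $\sigma\to0^+$ for fixed $n$. (The detour through ``$\sigma=1$'' and ``$\sigma=\sigma_n$'' is unnecessary: what makes the limit legitimate is that your constant is uniform in $\sigma\in(0,1]$.)
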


\begin{proof}
Take $\varphi \in \mathcal{D}((-1,1))$ with $\varphi(0) = 1$. Define $\varphi_n(x)=\varphi\left(\frac{x-\lambda_n}{\varepsilon_n}\right)$ for $n \in \N$. Then, $\varphi_n(\lambda_n) = 1$ and  $\varphi_n(\lambda_m) = 0$ for all $m \neq n$.  By the Saksman vertical convolution formula (Proposition \ref{saksmanth}), we obtain that, for all $D \in \H k$, 
$$
a_n(D)e^{-\lambda_n\sigma} = \frac{1}{2\pi}\int_{-\infty}^{\infty} f_D(\sigma+it)\widehat{\varphi}_n(-t)\mathrm{d}t, \qquad \sigma >0.
$$
We find that
    \begin{align*}
    |a_n(D)| e^{-\lambda_n\sigma} &\leq \frac{1}{2\pi}\int_{-\infty}^{\infty} |f(\sigma+it)| |\widehat{\varphi}_n(-t)|\mathrm{d}t \\
    &\leq  \frac{(1+\sigma)^k \varepsilon_n}{2\pi}\norm{D}_{\H k} \int_{-\infty}^{\infty} (1+\abs{t})^k\abs{\widehat{\varphi}(\varepsilon_nt)}\mathrm{d}t\\  
   &\leq \frac{(1+\sigma)^k}{2\pi}\norm{D}_{\H k} \int_{-\infty}^{\infty} \left(1+\frac{|t|}{\varepsilon_n}\right)^k\abs{\widehat{\varphi}(t)}\mathrm{d}t\\  
      &\leq \frac{(1+\sigma)^k}{2\pi}\norm{D}_{\H k}  \left(\frac{1}{\varepsilon_n}\right)^k \int_{-\infty}^{\infty} \left(1+\abs{t}\right)^k\abs{\widehat{\varphi}(t)}\mathrm{d}t.
    \end{align*}
The result follows by     letting $\sigma \to 0^+$.
\end{proof}

\begin{remark} \label{Banach} 
(i) Theorem \ref{coeff_bounds} implies that  $\| \cdot \|_{\H k}$ is a norm on $\H k$ and thus that \eqref{isometry} is an isometric isomorhpism. \\
(ii) As shown in \cite[Theorem 3.16]{finorder}, Proposition \ref{Bohr-Cahen} together with Theorems \ref{lkasymp} and \ref{coeff_bounds} imply that $\H k$ is a Banach space.
\end{remark}

Under a certain assumption on the order  or the frequency, we can show that the bounds in Theorem \ref{coeff_bounds} are optimal. Our approach, based on the open mapping theorem, is inspired by a method to show optimality results in Tauberian theory; see e.g. \cite{DS}.

\begin{theorem}\label{optimal}
Let $\lambda$ be a frequency and $k \geq 0$. Suppose that one of the two following conditions hold:
\begin{itemize}
\item[(i)] $k \leq 1$.
\item[(ii)] There are $c_1,c_2 >0$ such that, for all $n \geq 2$,  
$$
c_1 \leq \frac{\lambda_{n+1} - \lambda_n}{\lambda_n - \lambda_{n-1}}\leq c_2.
$$
\end{itemize}
Let $(\rho_n)_{n \in \N}$ be a sequence of positive numbers such that, for all $D \in \H k$,
 \begin{equation}
 \label{ass-opt}
 a_n(D)=O(\rho_n).
 \end{equation}
Then,
    \[  \left(\frac{1}{\varepsilon_n}\right)^k = O(\rho_n).\]
\end{theorem}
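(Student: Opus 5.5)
Following the remark before the statement, we use the open mapping theorem. Since $\H k$ is a Banach space (Remark~\ref{Banach}(ii)), we consider the weighted sequence space
$$
\ell^\infty_\rho = \{(c_n)_{n \in \N} \mid \sup_n |c_n|/\rho_n < \infty\}
$$
and the linear map $T \colon \H k \to \ell^\infty_\rho$, $D \mapsto (a_n(D))_n$. By \eqref{ass-opt} this map is well defined. It has closed graph, because coefficient functionals are continuous on $\H k$ by Theorem~\ref{coeff_bounds} and coordinate functionals are continuous on $\ell^\infty_\rho$. Hence $T$ is continuous, and there is $C>0$ with
$$
|a_n(D)| \leq C\rho_n \|D\|_{\H k}, \qquad D \in \H k,\ n \in \N.
$$
It then suffices to construct, for each $n$, an element $D_n \in \H k$ with $|a_n(D_n)| \gtrsim \varepsilon_n^{-k}$ and $\|D_n\|_{\H k} \lesssim 1$, uniformly in $n$.

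The natural test objects are Dirichlet polynomials of the form $D_n = \sum_{m} c_m e^{-\lambda_m s}$ with few terms, localized near $\lambda_n$. The basic example is the difference $e^{-\lambda_{n} s} - e^{-\lambda_{n+1}s}$, or $e^{-\lambda_{n-1} s} - e^{-\lambda_n s}$, whichever gap realizes $\varepsilon_n$. With $\delta$ the relevant gap we have
$$
|e^{-\lambda_n s}(1-e^{-\delta s})| \leq \min\{2,\delta|s|\} \quad \text{on } \C_0.
$$
So $\delta^{-1}$ times this difference has $\H 1$-norm bounded by an absolute constant, and coefficient $\delta^{-1}$ at $\lambda_n$. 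If $\delta\ge1$ we are in the trivial case $\varepsilon_n=1$, and we just use $D=e^{-\lambda_n s}$.

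For case (i), with $0<k\leq 1$, interpolate: use
$$
D_n = \delta^{-k}\bigl(e^{-\lambda_n s}-e^{-\lambda_{n+1}s}\bigr).
$$
Its norm is controlled by $\delta^{-k}\min\{2,\delta|s|\} \leq 2(1+|s|)^k$, since $\min\{2,\delta|s|\}\le 2\min\{1,\delta|s|\}^k\le 2(\delta|s|)^k$ for $\delta|s|\le1$. This gives $\varepsilon_n^{-k} \leq C'\rho_n$. The case $k=0$ is trivial.

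For case (ii) and $k>1$, use a $k$-th order finite difference. Take $N=\lceil k\rceil$ and the points $\lambda_{n},\lambda_{n+1},\dots,\lambda_{n+N}$ (or the points to the left of $\lambda_n$ if the minimal gap is on the left). Choose coefficients $c_j$ as divided-difference weights, so that
$$
\sum_j c_j e^{-\lambda_{n+j}s} = \int e^{-us}\,B(u)\,{\rm d}u\,(-s)^N,
$$
where $B$ is the B-spline of order $N$ on these nodes. This Hermite–Genocchi representation of divided differences gives $|\sum_j c_je^{-\lambda_{n+j}s}| \leq |s|^N$. Trivially it is also bounded by $\sum|c_j|$. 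Condition (ii) makes all the gaps $\lambda_{n+j+1}-\lambda_{n+j}$, $j<N$, comparable to $\delta\asymp\varepsilon_n$ with constants depending only on $c_1,c_2,N$. Hence the divided-difference weights satisfy $|c_j|\asymp \delta^{-N}$. Normalize $D_n = \delta^{N-k}\sum_j c_je^{-\lambda_{n+j}s}$. Then $|f_{D_n}(s)|\lesssim \min\{\delta^{-k},\delta^{N-k}|s|^N\}$, which is $\lesssim (1+|s|)^k$ when one splits into $|s|\le\delta^{-1}$ and $|s|>\delta^{-1}$. Also $|a_n(D_n)|\asymp \delta^{-k}$, and the conclusion follows.

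The main obstacle I expect is this last step. One has to verify that the coefficient at the specific index $n$ is of size $\delta^{-N}$ and not small. This is where the comparability of consecutive gaps in (ii) is essential, since the weights $1/\prod_{i\ne j}(\lambda_{n+j}-\lambda_{n+i})$ need all gaps of the same order. A second point is to handle the sup bound for $|s|\ge\delta^{-1}$ uniformly; here the crude bound $\sum|c_j|$ suffices, because $\operatorname{Re}s>0$ gives $|e^{-\lambda s}|\le1$.
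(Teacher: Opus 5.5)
Your proposal is correct and follows essentially the paper's argument: the open mapping theorem (which you invoke as the closed graph theorem into a weighted $\ell^\infty$), combined with the divided differences $e_s[\lambda_n,\ldots,\lambda_{n+\lceil k\rceil}]$. These are bounded both by the sum of their coefficients and by a multiple of $|s|^{\lceil k\rceil}$, and condition (ii) forces every coefficient to be of size $\delta^{-\lceil k\rceil}$. Your deviations are cosmetic: Hermite--Genocchi instead of the mean value theorem on real and imaginary parts, splitting at $|s|=\delta^{-1}$ instead of interpolating, normalizing $D_n$ by $\delta^{N-k}$, and choosing the side of the minimal gap rather than reading off the coefficients at indices $n$ and $n-1$.
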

\begin{proof}
 The assumption \eqref{ass-opt} implies that
$$
\norm{D}=\norm{D}_{\H k}+\sup_{n \in \N}\frac{|a_n(D)|}{\rho_n}, \qquad D \in \H k,
$$
is a norm on $\H k$.  Since $\H k$ is a Banach space (Remark \ref{Banach}(ii)) and $\| \cdot \|$ is stronger than $\| \cdot \|_{\H k}$, the open mapping theorem yields that these norms are equivalent. Consequently, there is $C >0$ such that, for all $D \in \H k$,
 \begin{equation}
 \label{OMT}\sup_{n \in \N} \frac{|a_n(D)|}{\rho_n}\leq C\norm{D}_{\H k}.
 \end{equation}
For $s \in \C_0$, we write $e_s(x) = e^{-xs}$, $x \in \R$. Set $l = \lceil k \rceil$. For $n \in \N$, we define $\lambda$-Dirichlet polynomial
$$
D_n(s) = e_s[\lambda_n, \ldots, \lambda_{n+l}] = \sum_{j =0}^l a_{n,j} e^{-\lambda_{n+j} s},
$$
where
$$
a_{n,j} = \frac{1}{\displaystyle \prod_{\substack{0\leq m\leq l \\ m\neq j}} (\lambda_{n+j}-\lambda_{n+m})},
$$
and we used standard notation for divided differences. If (ii) holds, there are $C_1,C_2 >0$ such that, for all $n \in \N$ and $j = 0, \ldots, l$,
 \begin{equation}
 \label{coeff}
\frac{C_1}{(\lambda_{n+1} - \lambda_n)^l} \leq |a_{n,j} | \leq \frac{C_2}{(\lambda_{n+1} - \lambda_n)^l}.
\end{equation}
In case of (i), the above even holds with  $C_1 = C_2 =1$.
Hence,
$$
|D_n(s)| \leq \frac{C_2(l+1)}{(\lambda_{n+1} - \lambda_n)^l}, \qquad s \in \C_0.
$$
By applying the mean value theorem for divided differences to the real and imaginary part of $D_n$, we find that
$$
|D_n(s)| \leq \frac{2}{l!}|s|^l, \qquad s \in \C_0.
$$
By combining the last two inequalities, we obtain that there is $C_3 >0$ such that, for all $n \in \N$,
$$
\norm{D_n}_{\H k}=\sup_{s \in \C_0} \frac{\abs{D_n(s)}^{1-\frac{k}{l}}\abs{D_n(s)}^{\frac{k}{l}}} {(1+\abs{s})^k} \leq \frac{C_3}{(\lambda_{n+1} - \lambda_n)^{l-k}}.
$$
In view of \eqref{OMT} and \eqref{coeff}, we get that, for all $n \in \N$ and $j = 0, \ldots, l$,
$$
\frac{C_1}{\rho_{n+j}{(\lambda_{n+1} - \lambda_n)^l}}  \leq  \frac{|a_{n,j} |}{\rho_{n+j}} =  \frac{|a_{n+j}(D_n) |}{\rho_{n+j}} \leq C\norm{D_n}_{\H k} \leq   \frac{CC_3}{(\lambda_{n+1} - \lambda_n)^{l-k}}.
$$
Hence,
$$
\frac{1}{(\lambda_{n+1} - \lambda_n)^k}   \leq   \frac{CC_3}{C_1} \rho_{n+j}.
$$
This implies that, for all $n \geq 2$,
$$
\max \left \{ \frac{1}{(\lambda_{n} - \lambda_{n-1})^k},  \frac{1}{(\lambda_{n+1} - \lambda_n)^k} \right\}   \leq   \frac{CC_3}{C_1} \rho_{n}.
$$
Furthermore, by taking $D = e^{-\lambda_n s}$ in \eqref{OMT}, we find that $1 \leq C \rho_n$. This shows the result.
\end{proof}
\begin{remark}
Condition (ii) is satisfied for the frequency $\lambda = (\log n)$, corresponding to the case of ordinary Dirichlet series.
\end{remark}
It is known that the $\eta$-Dirichlet series
$$
\sum_{n =1}^\infty \frac{(-1)^{n+1}}{n^s}
$$
belongs to $\mathcal{D}^{\operatorname{ext}}_{\infty,k}(\log n)$ for $k >1/2$, but not for $k <1/2$. Consequently, for $k > 1/2 > l$,
\begin{equation}
\label{stincl}
\mathcal{D}^{\operatorname{ext}}_{\infty,l}(\log n) \subsetneq \mathcal{D}^{\operatorname{ext}}_{\infty,k}( \log n).
\end{equation}
On the other hand,  $\mathcal{D}^{\operatorname{ext}}_{\infty,k}(n) = \mathcal{D}^{\operatorname{ext}}_{\infty,l}(n)$ for all $l,k \geq 0$ \cite[Proposition 3.3]{finorder}.

For general frequencies $\lambda$ and orders $k > l \geq 0$, it seems unclear  when the spaces $\H k$ and $\H l$ differ. Theorems \ref{coeff_bounds} and \ref{optimal} give the following partial answer to this question:
\begin{corollary}\label{inclusion}
Let $\lambda$ be a frequency such that 
\begin{equation}
\label{spacing}
\inf_{n \in \N} \lambda_{n+1} -\lambda_n = 0.
\end{equation}
Let $k > l \geq 0$ and suppose that $k$ satisfies condition (i) or (ii) from Theorem \ref{optimal}. Then, 
\begin{equation}
\label{stincl2}
\H l \subsetneq \H k.
\end{equation}

\end{corollary}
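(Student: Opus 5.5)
The inclusion $\H l \subseteq \H k$ is immediate. For $D \in \H l$ we have $(1+|s|)^l \leq (1+|s|)^k$ on $\C_0$, so $\|D\|_{\H k} \leq \|D\|_{\H l} < \infty$. The other requirement in the definition, $(\lambda,l')$-summability on some half-plane with a holomorphic extension to $\C_0$, does not depend on the growth exponent. So the whole task is strictness, and I plan to argue by contradiction using the coefficient bounds together with their optimality.

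Assume $\H l = \H k$. By Theorem \ref{coeff_bounds} applied with order $l$, every $D \in \H l$ satisfies $|a_n(D)| \leq C\|D\|_{\H l}\,\varepsilon_n^{-l}$. In particular $a_n(D) = O(\varepsilon_n^{-l})$. Since $\H k = \H l$, this estimate then holds for every $D \in \H k$. Condition (i) or (ii) of Theorem \ref{optimal} holds for the order $k$, so we may apply that theorem with $\rho_n = \varepsilon_n^{-l}$, which is a sequence of positive numbers because $\varepsilon_n \in (0,1]$. It gives
$$
\varepsilon_n^{-k} = O(\varepsilon_n^{-l}), \quad \text{that is,} \quad \varepsilon_n^{-(k-l)} \leq C' \text{ for all } n \in \N.
$$
As $k > l$, this says that $\inf_n \varepsilon_n > 0$.

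It remains to contradict \eqref{spacing}. Given $\delta > 0$, \eqref{spacing} provides $n$ with $\lambda_{n+1} - \lambda_n < \delta$. Then $\varepsilon_{n+1} \leq \lambda_{n+1} - \lambda_n < \delta$, because $\varepsilon_{n+1}$ is a minimum that includes the left gap $\lambda_{n+1} - \lambda_n$ (here $n+1 \geq 2$, so the general formula for $\varepsilon_{n+1}$ applies). Hence $\inf_n \varepsilon_n = 0$, which is the desired contradiction. This proves \eqref{stincl2}.

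No step is a real obstacle. The only point needing care is the bookkeeping: Theorem \ref{coeff_bounds} is used at order $l$, where it holds for every frequency with no extra hypothesis. Theorem \ref{optimal} is used at order $k$, which is exactly why the statement asks that $k$, and not $l$, satisfy (i) or (ii).
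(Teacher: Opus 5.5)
Your proof is correct and is essentially the paper's argument. Assuming $\H l = \H k$, Theorem \ref{coeff_bounds} at order $l$ gives $a_n(D)=O(\varepsilon_n^{-l})$ for all $D \in \H k$, and Theorem \ref{optimal} at order $k$ then forces $\varepsilon_n^{-k}=O(\varepsilon_n^{-l})$, contradicting \eqref{spacing}. You additionally spell out two steps the paper leaves implicit: the trivial inclusion $\H l \subseteq \H k$, and the last step $\varepsilon_{n+1}\leq \lambda_{n+1}-\lambda_n$.
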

\begin{proof}
Suppose that $\H l = \H k$. Theorem \ref{coeff_bounds} implies that $a_n(D) =   O \left (\varepsilon^{-l}_n \right)$  for all $D \in \H k$. Hence, by Theorem \ref{optimal}, we obtain that $ \varepsilon^{-k}_n = O \left (\varepsilon^{-l}_n \right)$. As $k > l$, this contradicts \eqref{spacing}.
\end{proof}
\begin{remark}
(i) Corollary \ref{inclusion} for $\lambda = (\log n)$ shows that the strict inclusion \eqref{stincl} holds for all $k > l \geq 0$. \\
(ii) It would be interesting to determine whether there exist frequencies $\lambda$ that do not satisfy \eqref{spacing}, but for which 
 \eqref{stincl2} holds for all $k > l \geq 0$; we believe that the work of Meyer \cite{Meyer} on Kahane's property $Q(\Lambda)$ \cite{Kahane} might offer inspiration for this problem.
 \end{remark}
\section{Bohr's theorem of order $k$}\label{sect-5}
In this main section, we present a new condition on frequencies implying Bohr's theorem of order $k$.

\subsection{The condition (SNC) and statement of the main result}\label{subs} A frequency $\lambda$ is said to satisfy condition (SNC) if
$$
\forall \delta>0 \, \exists C>0 \, \forall n \in \N \, \exists m> n: \log\left(\frac{\lambda_{m}}{\lambda_{m}-\lambda_n}\right) +\sum_{j=n+1}^{m-1} \frac{\lambda_j-\lambda_n}{\varepsilon_j} \leq C e^{\delta \lambda_n},
$$
We have that 
\begin{equation}
\label{freqcond}
\mbox{(LC) $\Rightarrow$ (SNC) $\Rightarrow$ (NC)},
\end{equation}
the first implication follows from \cite[Lemma 2.1]{bayart}, while the second one is clear.
Similarly to (NC), the condition (SNC) is a separation condition allowing for non-consecutive gaps $\lambda_m - \lambda_n$, $m > n$, unlike (LC). However, in contrast to (NC), it also requires that the intermediate gaps $\lambda_{j+1} - \lambda_j$, $j = n, \ldots, m-1$, do not become too small relative to each other. The following two examples illustrate this and show that the implications in \eqref{freqcond} are strict.

\begin{example}
Let $(\mu_j)_{j \in \N}$ be a frequency satisfying (LC) and $N \in \N$. Let $\varepsilon = (\varepsilon_j)_{j \in \N}$ be a sequence of positive numbers such that $\varepsilon_j \leq \min\{ 1,(\mu_{j+1} - \mu_{j})/(2N)\}$ for all $j \in \N$. Define the frequency  $\lambda = (\lambda_n)_{n \in \N}$ by $\lambda_n = \mu_{j+1} + l \varepsilon_j$ for $n = (N+1)j + l$, with  $j \in \N_0$, $l \in \{0, \ldots,N\}$. Then, $\lambda$ satisfies (SNC) for every choice of $\varepsilon$, but it satisfies (LC) if and only if $e^{-\delta \mu_j}= O(\varepsilon_j)$ for every $\delta >0$.
\end{example}
 
\begin{example}
Let $(\mu_j)_{j \in \N}$ be a frequency satisfying (LC) and $\varepsilon_l = (\varepsilon_{l,j})_{j \in \N}$, $l = 1,2$, be two sequences of positive numbers such that $\varepsilon_{1,j} + \varepsilon_{2,j} \leq (\mu_{j+1} - \mu_{j})/2$ for all $j \in \N$. Define the frequency  $\lambda = (\lambda_n)_{n \in \N}$ by 
\[
\lambda_n =
\begin{cases}
\mu_{j+1}, & n = 3j,     \quad j \in \mathbb{N}_0, \\
\mu_{j+1} + \varepsilon_{1,j}, & n = 3j + 1, \quad j \in \mathbb{N}_0, \\
\mu_{j+1} + \varepsilon_{1,j} + \varepsilon_{2,j}, & n = 3j + 2, \quad j \in \mathbb{N}_0.
\end{cases}
\]
 Then, $\lambda$ satisfies (NC) for every choice of $\varepsilon_1$ and $\varepsilon_2$, but  it does not satisfy (SNC) if $\varepsilon_{1,j} = O(e^{-\delta \mu_j})$ and $\varepsilon_{2,j} = O(\varepsilon_{1,j}e^{-\delta \mu_j})$ for some  $\delta >0$. 
\end{example}
Let  $k \geq 0$. Recall from the introduction that a frequency $\lambda$ is said to satisfy \emph{Bohr's theorem of order $k$} if $\sigma^{\lambda,k}(D) \leq 0$ for every $D \in \H k$. We refer to \cite[Theorems 2.8–2.10]{bohrpoly} for various equivalent formulations of Bohr's theorem of order $k$; in particular, we adopt here the characterization given in \cite[Theorem 2.9]{bohrpoly} as definition.  The following theorem is the main result of this article.

\begin{theorem}\label{main-article}
Let $\lambda$ be a frequency satisfying (SNC) and $k \geq 0$. Then, $\lambda$ satisfies Bohr's theorem of order $k$.
\end{theorem}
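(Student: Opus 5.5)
We follow the strategy sketched in the introduction. First we treat integer orders $k \in \N_0$ with the Saksman vertical convolution formula (Proposition \ref{saksmanth}). Then we reduce arbitrary $k \geq 0$ to the integral case via Lemmas \ref{technical-0} and \ref{technical}, as in \cite[Theorem 22]{hardy}. By the Bohr--Cahen formula (Proposition \ref{Bohr-Cahen}), applied to $D$ translated by $\sigma>0$, it suffices to show the following: for every $D \in \H k$ and every $\sigma,\delta>0$,
$$
\sup_{x>0} e^{-\delta x}\abs{R^{\lambda,k}_x(D)(\sigma)} < \infty .
$$
We therefore aim for a quantitative estimate
$$
\abs{R^{\lambda,k}_x(D)(s)} \leq C_\delta e^{\delta x}\norm{D}_{\H k}(1+\abs{s})^k, \qquad s \in \C_0,
$$
which is a maximal inequality in the spirit of Theorem \ref{lkasymp}.

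For integral $k$, fix $x$ and let $n$ be the largest index with $\lambda_n \leq x$. Let $m>n$ be the index provided by (SNC) for $\lambda_n$. We build a mollifier $\varphi \in C_c(\R)$ with the following properties:
\begin{itemize}
\item $\varphi(y) = (1-y/x)^k$ on $[0,\lambda_n]$;
\item $\varphi$ vanishes at all $\lambda_m,\lambda_{m+1},\dots$;
\item $\varphi$ is controlled at the intermediate frequencies $\lambda_j$, $n<j<m$.
\end{itemize}
On $[\lambda_n,\lambda_m]$ we interpolate smoothly from $(1-y/x)^k$ down to $0$. Concretely, we rescale and glue a profile adapted to the Riesz kernel: on $(-\infty,\lambda_n]$ we take a Helson-type function as in Lemma \ref{lemmahelson}, and on $[\lambda_n,\lambda_m]$ a $C^{k+1}$-bump cutoff at scale $\lambda_m-\lambda_n$. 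For the intermediate terms $\lambda_j$, $n<j<m$, we subtract off their contributions separately using the localized mollifiers $\varphi_j$ from the proof of Theorem \ref{coeff_bounds}, supported in $(\lambda_j-\varepsilon_j,\lambda_j+\varepsilon_j)$. This expresses $R^{\lambda,k}_x(D)(s)$ as $\sum_n a_n\varphi(\lambda_n)e^{-\lambda_n s}$ minus finitely many error terms, each of the form $a_j(D)\varphi(\lambda_j)e^{-\lambda_j s}$ or given by a Saksman integral.

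Three estimates then finish the integral case. The main integral is bounded by $\norm{D}_{\H k}(1+\abs{s})^k\int (1+\abs{t})^k\abs{\widehat\varphi(t)}\,{\rm d}t$. Because $\varphi$ agrees with a $k$-times differentiable kernel whose $(k+1)$-st jump occurs at $x$, we expect the Fourier weight to be bounded by a logarithmic quantity of the form $\log(\lambda_m/(\lambda_m-\lambda_n))$; the cutoff scale $\lambda_m-\lambda_n$ relative to $x\sim\lambda_m$ produces exactly this logarithm, as in Bayart's $k=0$ argument. The intermediate terms are bounded by Theorem \ref{coeff_bounds}: $\abs{a_j(D)}\leq C\varepsilon_j^{-k}\norm{D}$, multiplied by $\abs{\varphi(\lambda_j)}\lesssim ((\lambda_j-\lambda_n)/x)^{k}$, or the analogous power. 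Choosing the interpolation so that these factors combine to something controlled by $(\lambda_j-\lambda_n)/\varepsilon_j$ reproduces the sum appearing in (SNC). Summing everything gives a bound $C e^{\delta\lambda_n}\leq Ce^{\delta x}$, which is the desired estimate.

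For non-integral $k$, write $k=\lfloor k\rfloor+\varepsilon$. Lemma \ref{technical-0} expresses $S^{\lambda,k}_x$ as a fractional integral of $S^{\lambda,\lfloor k\rfloor}_y$. We split this integral at $\xi = x - \eta$. On $[0,\xi]$ we apply Lemma \ref{technical} together with Theorem \ref{lkasymp}, which controls orders in $(k,k+1]$ with the explicit $1/(l-k)$ constant. On the short piece near $x$ we use the integral-order estimate, applied through the identity for divided means, and we choose $\eta$ depending on $x$ to balance the two parts. The hard step is the mollifier construction in the integral case. Getting the Fourier decay of order $(1+\abs{t})^{-k-1}$ with a constant only logarithmic in $\lambda_m/(\lambda_m-\lambda_n)$, while keeping the values $\varphi(\lambda_j)$ small enough to match the weights $(\lambda_j-\lambda_n)/\varepsilon_j$ exactly, requires careful bookkeeping. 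This is where (SNC), rather than merely (NC), must enter. I would try first a profile that is polynomial, matching $(1-y/x)^k$ to order $k$ at $\lambda_n$ and decaying linearly in the scaled variable $(y-\lambda_n)/(\lambda_m-\lambda_n)$.
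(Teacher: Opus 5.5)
\textbf{Integral case.} For integral $k$ your plan has the same structure as the paper's Proposition~\ref{max-ineq-1} with $K=k$. You extend $(1-y/x)^k$ from $[0,\lambda_n]$ to a function vanishing on $[\lambda_m,\infty)$, use Proposition~\ref{saksmanth} with a Fourier bound of size $1+\log\frac{\lambda_m}{\lambda_m-\lambda_n}$, and handle $\lambda_{n+1},\dots,\lambda_{m-1}$ by Theorem~\ref{coeff_bounds}. However, the extension, which is the technical heart of this step, is left unconstructed. The profile you propose to try first, decaying linearly, has corners, so $|\widehat\varphi(t)|$ decays only like $|t|^{-2}$ and $\int(1+|t|)^k|\widehat\varphi(t)|\,{\rm d}t=\infty$ for $k\ge1$. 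You need the $C^{k+1}$-type smoothness at scale $\lambda_m-\lambda_n$ that you mention earlier; the paper uses B-splines convolved with a kernel with vanishing moments (Lemma~\ref{cstr-part1}, Proposition~\ref{cstr-part2}). You also need a bounded cutoff scale, which the paper arranges by normalizing $\lambda_m-\lambda_n\le 2$ via Lemma~\ref{techfreq1}.

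\textbf{Non-integral case: the genuine gap.} Write $K=\lfloor k\rfloor<k$. Your far piece, handled by Lemma~\ref{technical} and Theorem~\ref{lkasymp}, matches the paper. The near piece $\int_{x-\eta}^{x}S^{\lambda,K}_y(D)(s)(x-y)^{k-K-1}\,{\rm d}y$ is the problem: it needs bounds on order-$K$ Riesz means of $D\in\H k$. Your integral-order estimate concerns $R^{\lambda,K}$ on the smaller space $\H K$ and does not apply here. These means are genuinely large. For $K=0<k<1$ and $g=\lambda_{l+1}-\lambda_l$, the polynomial $D=g^{-k}(e^{-\lambda_l s}-e^{-\lambda_{l+1}s})$ has $\norm{D}_{\H k}\le 2$, since $|1-e^{-gs}|\le\min\{2,g|s|\}$. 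Yet $R^{\lambda,0}_y(D)(0)=g^{-k}$ for $y\in[\lambda_l,\lambda_{l+1})$.

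The correct substitute is the mixed estimate of Proposition~\ref{max-ineq-1}: $\norm{R^{\lambda,K}_x}_{\mathcal{L}(\H k)}\lesssim(\lambda_{n+1}-\lambda_n)^{-(k-K)}(\cdots)$ for $x\in[\lambda_n,\lambda_{n+1}]$. This in turn requires the weighted Fourier bound $(\beta-\alpha)^{-(k-K)}(1+\log\frac{\beta}{\beta-\alpha})$ and the pointwise bound $|\varphi(y)|\lesssim(y-\alpha)^K$.

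Even with that estimate, a single cut at $x-\eta$ cannot be balanced under (SNC):
\begin{itemize}
\item A sup bound on the near piece gives $(\eta/g)^{k-K}$, where $g$ is the smallest gap meeting $[x-\eta,x]$. This forces $\eta\lesssim g$.
\item The far piece then costs $\log(x/\eta)\gtrsim\log(x/g)$.
\item (SNC) imposes no lower bound on individual gaps (first example in Subsection~\ref{subs}), so $\log(x/g)$ need not be $O(e^{\delta x})$.
\end{itemize}

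The paper closes this with three ingredients:
\begin{itemize}
\item It cuts the far piece at $\lambda_{\tilde m(n)}$. There $x-\lambda_{\tilde m(n)}\ge\lambda_{m(\tilde m(n))}-\lambda_{\tilde m(n)}$, so its cost is an (SNC) logarithm.
\item It splits $[\lambda_{\tilde m(n)},x]$ along the frequency grid $[\lambda_l,\lambda_{l+1}]$. On each such piece, $\int_{\lambda_l}^{\lambda_{l+1}}(x-y)^{k-K-1}\,{\rm d}y\le(\lambda_{l+1}-\lambda_l)^{k-K}/(k-K)$ exactly cancels the gap factor.
\item It bounds the number $n-\tilde m(n)+1$ of pieces by $Ce^{\delta\lambda_n}$ using (SNC) (Lemma~\ref{techfreq0}).
\end{itemize}
None of these appears in your sketch, and your unspecified ``identity for divided means'' does not supply them.
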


The remainder of this section is devoted to showing Theorem \ref{main-article}. We now give an outline of the proof. The main step consists in establishing a suitable maximal inequality for the norms $\|R^{\lambda,k}_x\|_{\mathcal{L}(\H k)}$, from which the result will follow by the Bohr–Cahen formula together with a technical result about the condition (SNC).

If $k \in \N_0$, we prove this maximal inequality by adapting Bayart's method from \cite[Theorem 3.2]{bayart}. For $D \in \H k$, $n,m \in \N$, $m>n$, and $x \in [\lambda_n, \lambda_{n+1}]$, we write
$$
R^{\lambda,k}_x(D)(s) = \sum_{j=1}^\infty a_j(D) \varphi(\lambda_j)e^{-\lambda_j s}-\sum_{j=n+1}^{m-1}a_j(D)\varphi(\lambda_{j})e^{-\lambda_{j}s}, \qquad s \in \C_0,
$$
where $\varphi \in \mathcal{D}(\mathbb{R})$ is a function with $\varphi(y)=\left(1-\frac{y}{x}\right)^K$ for $y \in [0,\lambda_n]$ and $\operatorname{supp}\varphi \subseteq (-\infty,\lambda_m]$ such that it has good Fourier decay and satisfies suitable bounds on $[\lambda_n,\lambda_m]$. The first sum can then be  estimated using the Saksman vertical convolution formula, while the second one can be treated via the  coefficient bounds established in Theorem \ref{coeff_bounds}. The principal difficulty is the construction of a suitable $\varphi$.

Finally, we reduce the general (non-integral) case to the integral one and Proposition \ref{lkasymp} by a technique going back to Hardy and Riesz \cite[Theorem 22]{hardy}; see also \cite[Theorem 3.4]{bohrpoly}.

\begin{remark}
We do not know whether Bohr's theorem of order $k$ holds under the condition (NC).
\end{remark}

\subsection{A compactly supported extension of the Riesz kernel with good Fourier decay}
Throughout this subsection, we fix $1 \leq \alpha < \beta$ with $\beta -\alpha \leq 2$ and  $x \in [\alpha,\beta]$.  Let $K \in \N_0$. Our goal is to construct a function $\varphi \in \mathcal{D}(\R)$  such that $\varphi(y) = \left(1-\frac{y}{x}\right)^K$  for $y \in [0, \alpha]$,  $\supp \varphi \subseteq (-\infty, \beta]$, and for which the integrals
$$
\int_{-\infty}^\infty | \widehat{\varphi}(t)| (1+\abs{t})^k {\rm d} t
$$
admit suitable bounds for $k \geq K$. Moreover, we will  establish estimates for $\varphi$ on $[\alpha, \beta] $. Our construction of $\varphi$ is inspired by the trapezoid function used by Bayart in \cite[Theorem 3.2]{bayart} (corresponding to the case $K = 0$), but is much more involved.

We will use repeated convolutions of indicator functions, sometimes referred to as cardinal $B$-splines. Given $a >0$, we write $\chi_a$ for the indicator function of $[-a,a]$. For $n \in \N$, we define 
$$
\chi_{n,a} = \underbrace{\chi_a \ast \cdots \ast \chi_a}_{n \text{ times}}.
$$
We will need the following properties of these functions.
\begin{lemma}
\label{xi}
Let $a>0$ and $n \in \N$.
\begin{itemize}
\item[(i)] $\chi_{n,a}$ is piecewise polynomial and, for $n \geq 2$, belongs to $C^{n-2}(\R)$.
\item[(ii)] $\supp \chi_{n,a} \subseteq [-na,na]$.
\item[(iii)]  We have that
\[\chi_{n,a}(y)=\frac{1}{(n-1)!}\left(na-y\right)^{n-1}, \qquad  y\in[(n-2)a, na],\]
and
    \[\chi_{n,a}(y)=\frac{1}{(n-1)!}\left(na+y\right)^{n-1}, \qquad y\in[-na, -(n-2)a].\]
\item[(iv)] If $n \geq 2$, then $\| \chi_{n,a}^{(n-2)}\|_\infty \leq 2^{n-1} a$.
\item[(v)] We have that
$$
\widehat{\chi}_{n,a}(t) = 2^{n} \left( \frac{\sin(at)}{t} \right)^{n}, \qquad t \in \R.
$$
\end{itemize}
\end{lemma}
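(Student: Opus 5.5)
The natural route is to treat $\chi_{n,a}$ as the $n$-fold convolution of $\chi_a$ and prove (ii), (v), (i), (iii) and (iv) in that order, by induction on $n$ where needed.

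Item (ii) is immediate: $\supp(f\ast g)\subseteq \supp f+\supp g$, and $\supp \chi_a=[-a,a]$.

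Item (v) follows from the convolution theorem, $\widehat{f\ast g}=\widehat f\,\widehat g$, together with the direct computation
$$
\widehat{\chi}_a(t)=\int_{-a}^a e^{-ity}\,{\rm d}y=\frac{2\sin(at)}{t}.
$$

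For (i) I induct on $n$, using
$$
\chi_{n+1,a}(y)=\int_{y-a}^{y+a}\chi_{n,a}(u)\,{\rm d}u .
$$
Since $\chi_{n,a}$ is piecewise polynomial with breakpoints in $a\mathbb{Z}$, so is its antiderivative, and hence so is $\chi_{n+1,a}$. This formula also gives
$$
\chi_{n+1,a}'(y)=\chi_{n,a}(y+a)-\chi_{n,a}(y-a),
$$
which holds wherever $\chi_{n,a}$ is continuous and holds piecewise for $n=1$. This identity raises the smoothness class by one at each step. For the base case, $\chi_{2,a}(y)=(2a-|y|)_+$ is continuous, so it lies in $C^0$, as required.

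For (iii) I also induct. The case $n=1$ is trivial. Take $y\in[(n-1)a,(n+1)a]$. Then $u\in[y-a,y+a]$ ranges over $[(n-2)a,(n+2)a]$. On this range $\chi_{n,a}(u)$ equals $\frac{(na-u)^{n-1}}{(n-1)!}$ for $u\le na$ and vanishes for $u\ge na$. Note that $[(n-2)a,na]$ is exactly the interval where (iii) holds for $n$. Integrating from $y-a$ to $na$ gives
$$
\chi_{n+1,a}(y)=\frac{\bigl((n+1)a-y\bigr)^{n}}{n!}.
$$
The formula on the left end follows by symmetry, since $\chi_{n,a}$ is even.

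For (iv) I iterate the derivative identity from (i) $n-2$ times to write
$$
\chi_{n,a}^{(n-2)}=\sum_{j=0}^{n-2}(-1)^j\binom{n-2}{j}\,\chi_{2,a}\bigl(\cdot+(n-2-2j)a\bigr).
$$
Since $\norm{\chi_{2,a}}_\infty=2a$, this gives $\norm{\chi_{n,a}^{(n-2)}}_\infty\le 2^{n-2}\cdot 2a=2^{n-1}a$.

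The only step needing care is justifying this differentiation formula at the required smoothness level. I would handle it by working with $\chi_{2,a}$ as the base, which is Lipschitz, and differentiating under the integral, since $\chi_{n,a}$ is continuous for $n\ge2$.
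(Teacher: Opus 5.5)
Your proposal is correct and follows the paper's route. The paper only asserts that (i)--(iv) follow by induction on $n$ (citing Chui) and obtains (v) from $\widehat{\chi}_{n,a}=(\widehat{\chi}_a)^n$; your argument supplies exactly that induction, and your expansion for (iv) is just the unrolled form of the inductive bound $\|\chi_{n+1,a}^{(n-1)}\|_\infty\le 2\|\chi_{n,a}^{(n-2)}\|_\infty$, which comes from $\chi_{n+1,a}'=\chi_{n,a}(\cdot+a)-\chi_{n,a}(\cdot-a)$.
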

\begin{proof}
The properties (i)--(iv) can be shown by induction on $n$; see also \cite[Sections 4.1 and 4.2]{Chui}. For (v), note that
$$
\widehat{\chi}_{n,a}(t) = \left(\widehat{\chi}_{a}(t)\right)^{n} = 2^{n}\left( \frac{\sin(at)}{t} \right)^{n}, \qquad t \in \R.
$$
\end{proof}

Our construction of $\varphi$ consists of two steps. First, we employ  Lemma \ref{xi} to find a suitable piecewise polynomial extension of the Riesz kernel $\left(1-\frac{\cdot}{x}\right)^K$. This is done in the next lemma. 
\begin{lemma}\label{cstr-part1}
Let $K \in \N$ and set $b = \frac{\beta - \alpha}{K+2}$. There exists a  compactly supported, piecewise polynomial function $\psi  \in C^{K-1}(\R)$ satisfying the following properties:
\begin{itemize}
\item[(i)]  $\psi(y)=\left(1-\frac{y}{x}\right)^K$ for $y\in[-b, \alpha+b]$.
\item[(ii)] $\supp \psi \subseteq (-\infty,\beta-b]$.
\item[(iii)]   Set $\kappa(y) = \psi(y) - \left(1-\frac{y}{x}\right)^K$. Then, 
$$
|\kappa^{(K-1)}(y)| \leq K!(2^{K-1} + K+3)b, \qquad y\in[\alpha+b, \beta +b].
$$
\item[(iv)] We have that
$$
|\widehat{\psi}(t)| \leq \frac{2^{K+1} K!}{x^K} \left( \left | \frac{\sin(a_1t)}{t} \right|^{K+1} + \left | \frac{\sin(a_2t)}{t} \right|^{K+1} \right), \qquad t \in \R,
$$
for $a_1 = (x+b)/2$ and some $a_2 \in [0, b/2]$.
\end{itemize}
\end{lemma}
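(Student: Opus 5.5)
The plan is to build $\psi$ from at most two shifted, rescaled cardinal $B$-splines $\chi_{K+1,a}$, using the explicit polynomial pieces at their ends from Lemma \ref{xi}(iii). With $n=K+1$, that lemma gives $\chi_{K+1,a}(y)=\frac{1}{K!}((K+1)a-y)^K$ on $[(K-1)a,(K+1)a]$. Hence
$$
\psi_1(y)=\frac{K!}{x^K}\,\chi_{K+1,a_1}\bigl(y-x+(K+1)a_1\bigr)
$$
coincides with $\frac{1}{x^K}(x-y)^K=(1-\frac yx)^K$ on $[x-2a_1,x]$. Choosing $a_1=(x+b)/2$ makes this interval $[-b,x]$. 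Moreover, by Lemma \ref{xi}(ii), $\operatorname{supp}\psi_1\subseteq(-\infty,x]$.

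If $x\ge \alpha+b$, I set $\psi=\psi_1$ and $a_2=0$. Then (i) holds because $[-b,\alpha+b]\subseteq[-b,x]$. Property (ii) holds because $x\le\beta-b$ is not automatic, so I must check it: since $\operatorname{supp}\psi_1\subseteq(-\infty,x]$, I need $x\le \beta-b$; if $x>\beta-b$, I will use instead a spline with right end $\beta-b$ together with a correction, or more simply note that the case analysis below with $a_2$ handles $x\in[\alpha,\alpha+b]$ and that for larger $x$ one keeps $\psi_1$ but verifies (ii) against the precise constraints (I expect this bookkeeping to need a small adjustment).

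If $\alpha\le x<\alpha+b$, the function $\psi_1$ vanishes on $(x,\alpha+b]$, whereas the kernel equals $\frac{(-1)^K}{x^K}(y-x)^K$ there. I repair this with the left-end piece of a second spline. Setting $a_2=(\alpha+b-x)/2\in[0,b/2]$, I add
$$
\psi_2(y)=\frac{(-1)^KK!}{x^K}\,\chi_{K+1,a_2}\bigl(y-x-(K+1)a_2\bigr).
$$
This function equals $\frac{(-1)^K}{x^K}(y-x)^K$ on $[x,x+2a_2]=[x,\alpha+b]$ and vanishes to the left of $x$. Its support ends at
$$
x+2(K+1)a_2=\alpha+b+K(\alpha+b-x)\le \alpha+(K+1)b=\beta-b,
$$
which gives (ii). The sum $\psi=\psi_1+\psi_2$ then equals the kernel on $[-b,\alpha+b]$, giving (i). Each spline is piecewise polynomial and lies in $C^{K-1}$ by Lemma \ref{xi}(i), so $\psi$ does too.

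For (iv), I apply Lemma \ref{xi}(v): translations do not change $|\widehat{\cdot}|$, so
$$
|\widehat{\psi}(t)|\le \frac{K!}{x^K}\,2^{K+1}\left(\left|\frac{\sin a_1t}{t}\right|^{K+1}+\left|\frac{\sin a_2t}{t}\right|^{K+1}\right).
$$

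For (iii), I work on $[\alpha+b,\beta+b]$. There $\psi_1$ either vanishes or equals the kernel, and $\psi_2$ vanishes beyond $\alpha+b+K(\alpha+b-x)$. Hence $|\kappa^{(K-1)}|$ is at most the sum of two contributions. The first is the spline contribution $\frac{K!}{x^K}\|\chi^{(K-1)}_{K+1,a_2}\|_\infty\le K!\,2^Ka_2\le K!\,2^{K-1}b$, by Lemma \ref{xi}(iv) and $x\ge1$. The second is the kernel contribution $\frac{K!}{x^K}|x-y|\le K!(\beta+b-\alpha)=K!(K+3)b$.

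The step I expect to be the main obstacle is the support condition (ii) when $x$ lies near $\beta$, where $\psi_1$ alone ends at $x>\beta-b$. I would try first to handle it by replacing $a_1$ with a smaller parameter so that the agreement interval is $[-b,\alpha+b]$ and the right end is $\le\beta-b$, and then compensating with a $\psi_2$-type correction. Throughout, I would keep the Fourier bound in the two-term form required by (iv) and check that the constants in (iii) survive.
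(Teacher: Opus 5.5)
Your cases $x<\alpha+b$ and $\alpha+b\le x\le\beta-b$ are correct and coincide with the paper's construction. You use the same $\psi_1$, the same additive correction $\psi_2$ with $a_2=(\alpha+b-x)/2$, and the same estimates for (iii) and (iv). However, the case $\beta-b<x\le\beta$ is left open, and this is a genuine gap rather than bookkeeping. There $\psi_1$ is nonzero on $(\beta-b,x)$, so (ii) fails.

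Your suggested repair, shrinking $a_1$, cannot work. The right-end polynomial piece of $\chi_{K+1,a}$ is $\frac{1}{K!}((K+1)a-z)^K$, which has a $K$-fold zero at the right endpoint of the support. For a shifted copy to coincide with $(x-y)^K/x^K$ on an interval, that endpoint must be exactly $x$, whatever $a$ is. Changing $a_1$ only moves the left end of the support.

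The missing idea is to subtract a second spline with the same right end, namely the spline on the right-hand side of
$$
\psi_2(y)=\frac{K!}{x^K}\chi_{K+1,a_2}\bigl(y-x+(K+1)a_2\bigr), \qquad a_2=\frac{x+b-\beta}{2}\in(0,b/2].
$$
By Lemma~\ref{xi}(iii), $\psi_2$ equals $(1-y/x)^K$ on $[x-2a_2,x]=[\beta-b,x]$. By Lemma~\ref{xi}(ii), it is supported in $[x-(K+1)(x+b-\beta),x]$.

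Take $\psi=\psi_1-\psi_2$.
\begin{itemize}
\item[(ii)] $\psi$ vanishes on $[\beta-b,\infty)$.
\item[(i)] This survives because, using $x\le\beta$,
$$
x-(K+1)(x+b-\beta)=(K+1)\beta-Kx-(K+1)b\ge \beta-(K+1)b=\alpha+b.
$$
\item[(iii)] We have $\kappa=-\psi_2$ on $[\alpha+b,x]$ and $\kappa=-(1-y/x)^K$ on $[x,\beta+b]$. So your two bounds apply verbatim: Lemma~\ref{xi}(iv) with $a_2\le b/2$ and $x\ge 1$, and $K!\,|y-x|/x^K\le K!(K+3)b$.
\item[(iv)] This follows from Lemma~\ref{xi}(v) exactly as in your other case.
\end{itemize}
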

\begin{proof}
Define
    \[\psi_1(y)=\frac{K!}{x^K}\chi_{K+1,a_1}(y-x+(K+1)a_1), \quad \mbox{ with }a_1=\frac{x+b}{2}.\]
  From Lemma \ref{xi}(i)--(iii) we obtain that $\psi_1 \in C^{K-1}(\R)$ is a compactly supported, piecewise polynomial function such that $\psi_1(y)=\left(1-\frac{y}{x}\right)^K$ for $y\in[-b, x]$ and $\operatorname{supp} \psi_1 \subseteq (-\infty, x]$.
    
    In the remainder of the proof, we distinguish three cases: $x<\alpha+b$  (I); $\alpha+b \leq x \leq \beta-b$   (II); $x>\beta-b$ (III). 
    
    We start with case (II), which is the easiest. Set $\psi = \psi_1$. It is clear that $\psi$ satisfies (i) and (ii), while (iv) follows from Lemma \ref{xi}(v).  Next, we prove (iii). Note that $\kappa(y) = 0$ for $y \in [\alpha + b, x]$. If $y \in [x,\beta +b]$, then $\kappa(y) = - \left(1-\frac{y}{x}\right)^K$ and thus
    $$
 |\kappa^{(K-1)}(y)| = \frac{K!}{x^K} (y-x) \leq K!(\beta - \alpha + b)    = K!(K+3)b.
    $$
    This shows (iii).
    
    Next, we consider case (I). The function $\psi_1$ now becomes identically zero from $x$ onwards, which is too soon. We fix this by adding a  function to $\psi_1$. Define 
    \[\psi_2(y)=\frac{(-1)^KK!}{x^K}\chi_{K+1,a_2}(y-x-(K+1)a_2), \quad \mbox{ with }a_2=\frac{\alpha+b-x}{2}.\]
  By Lemma \ref{xi}(i)--(iii),  we get that $\psi_2 \in C^{K-1}(\R)$  is a compactly supported, piecewise polynomial function such that $\psi_2(y)=\left(1-\frac{y}{x}\right)^K$ for $y\in[x, \alpha + b]$ and $\operatorname{supp} \psi_2 \subseteq [x,x+(K+1)(\alpha+b-x)]$. Set $\psi = \psi_1 +\psi_2$. Then, (i) is clear, (ii) follows from the inequality 
  $$
  x+(K+1)(\alpha+b-x) \leq \beta - b,
  $$
and  (iv) is a consequence of Lemma \ref{xi}(v).
 Next, we prove (iii).  For $y \in [\alpha + b, \beta+b]$, it holds that $\kappa(y) =  \psi_2(y) - \left(1-\frac{y}{x}\right)^K$  and thus, by Lemma \ref{xi}(iv),
    $$
 |\kappa^{(K-1)}(y)| \leq \frac{K!}{x^K} ( 2^Ka_2 + (y-x)) \leq K!(2^{K-1} + K+3)b.
    $$

Finally, we treat case (III). Now, the support of  $\psi_1$ is too large.
 We fix this by subtracting a  function from $\psi_1$. Define 
    \[\psi_2(y)=\frac{K!}{x^K}\chi_{K+1,a_2}(y-x+(K+1)a_2), \quad \mbox{ with }a_2=\frac{x+b-\beta}{2}.\]
 By Lemma \ref{xi}(i)--(iii),  we get that $\psi_2 \in C^{K-1}(\R)$  is a compactly supported, piecewise polynomial function such that $\psi_2(y)=\left(1-\frac{y}{x}\right)^K$ for $y\in[\beta-b,x]$ and $\operatorname{supp} \psi_2 \subseteq [x-(K+1)(x+b-\beta),x]$. Set $\psi = \psi_1 -\psi_2$.   Then, (ii) is clear, (i) follows from the inequality 
  $$
 \alpha + b \leq  x - (K+1)(x+b-\beta),
  $$
and  (iv) is a consequence of Lemma \ref{xi}(v). Next, we prove (iii).  It holds that $\kappa(y) =  \psi_2(y)$ for $y \in [\alpha+b, x]$ and $\kappa(y) = - \left(1-\frac{y}{x}\right)^K$ for $y \in [x, \beta+b]$. Frow Lemma \ref{xi}(iv) we obtain that
$$
 |\kappa^{(K-1)}(y)| \leq \frac{K!}{x^K} ( 2^Ka_2 + (y-x)) \leq K!(2^{K-1} + K+3)b.
    $$
 \end{proof}
In the second part of the construction, we improve the Fourier decay of the function $\psi$ from Lemma \ref{cstr-part1} by convolving it with a compactly supported smooth function. To ensure that the convolution still agrees with the Riesz kernel $\left(1-\frac{\cdot}{x}\right)^K$ on $[\alpha,\beta]$, we choose this function to have total mass $1$ and vanishing moments of orders $1$ to $K$.
\begin{proposition}\label{cstr-part2} Let $K \in \N$. There exists  $\varphi \in \mathcal{D}(\R)$ satisfying the following properties:
\begin{itemize}
\item[(i)]  $\varphi(y)=\left(1-\frac{y}{x}\right)^K$ for $y\in[0, \alpha]$.
\item[(ii)] $\supp \varphi\subseteq (-\infty,\beta]$.
\item[(iii)]  There is $C >0$ (independent of $\alpha$, $\beta$, and $x$) such that
$$
|\varphi(y)| \leq C (y-\alpha)^K, \qquad y \in [x, \beta].
$$
\item[(iv)] For every $k \geq K$ there is $C >0$ (independent of $\alpha$, $\beta$, and $x$) such that
$$
\int_{-\infty}^\infty | \widehat{\varphi}(t)| (1+\abs{t})^k {\rm d} t \leq  \frac{C}{(\beta -\alpha)^{k-K}} \left(1+\log\left(\frac{\beta}{\beta-\alpha}\right) \right).
$$
\end{itemize}
\end{proposition}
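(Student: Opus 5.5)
We take $\psi$ from Lemma \ref{cstr-part1} with $b = \frac{\beta-\alpha}{K+2}$ and set $\varphi = \psi \ast \rho_b$, where $\rho_b(y) = b^{-1}\rho(y/b)$. Here $\rho \in \mathcal{D}((-1,1))$ is a fixed function with $\int \rho = 1$ and $\int y^j \rho(y)\,{\rm d}y = 0$ for $j = 1,\ldots,K$. Then $\varphi \in \mathcal{D}(\R)$.

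\textbf{Properties (i) and (ii).} Since $\rho_b$ has mass $1$ and vanishing moments up to order $K$, convolving with $\rho_b$ reproduces polynomials of degree at most $K$. For $y \in [0,\alpha]$ the window $[y-b,y+b]$ lies in $[-b,\alpha+b]$, where $\psi$ equals the polynomial $(1-\cdot/x)^K$ by Lemma \ref{cstr-part1}(i). This gives (i). Property (ii) follows from Lemma \ref{cstr-part1}(ii), since $\supp \varphi \subseteq (-\infty, \beta-b+b] = (-\infty,\beta]$.

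\textbf{Property (iii).} Write $\psi = P + \kappa$ with $P(y) = (1-y/x)^K$. Then $P \ast \rho_b = P$, so $\varphi = P + \kappa \ast \rho_b$.
\begin{itemize}
\item Since $\kappa = 0$ on $[-b,\alpha+b]$ and $\kappa \in C^{K-1}$, Taylor's formula at $\alpha+b$ together with Lemma \ref{cstr-part1}(iii) gives $|\kappa(z)| \lesssim b\,(z-\alpha-b)_+^{K-1}$ for $z \in [\alpha, \beta+b]$.
\item For $y \geq x \geq \alpha$ we have $|P(y)| = ((y-x)/x)^K \leq (y-\alpha)^K$, using $x \geq 1$.
\item For $\kappa \ast \rho_b$ we only get $b\,(y-\alpha)^{K-1}$ directly. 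This is $\lesssim (y-\alpha)^K$ only when $y-\alpha \gtrsim b$, so the range $y$ close to $\alpha$ needs a separate argument.
\item Close to $\alpha$: since $\kappa \ast \rho_b = \varphi - P$ vanishes on $[0,\alpha]$ and $\kappa$ vanishes up to $\alpha+b$, the function $\kappa\ast\rho_b$ vanishes on $(-\infty,\alpha]$ to the order allowed by its smoothness. The $K$-th derivative $\kappa \ast \rho_b^{(K)}$ is bounded by $b \cdot b^{-1}$ times constants, using a $(K-1)$-fold derivative transfer and $\|\rho_b'\|_{L^1}\sim b^{-1}$. Taylor then gives $|\kappa\ast\rho_b(y)| \lesssim (y-\alpha)^K$.
\end{itemize}
The constants are uniform because $b\leq 2$ and $x \geq 1$.

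\textbf{Property (iv), the main obstacle.} We have $\widehat\varphi = \widehat\psi\,\widehat\rho(bt)$, with $|\widehat\rho(u)| \leq C_N(1+|u|)^{-N}$. By Lemma \ref{cstr-part1}(iv), $|\widehat{\psi}(t)|\lesssim |\sin(a_1t)/t|^{K+1} + |\sin(a_2t)/t|^{K+1}$. We split the integral accordingly.
\begin{itemize}
\item \emph{The $a_2$-term.} Use $|\sin(a_2t)/t| \leq \min(a_2, 1/|t|) \leq \min(b,1/|t|)$. For $|t|\leq 1/b$ the integrand is at most $b^{K+1}(1+|t|)^k$, and this region contributes $\lesssim b^{K-k}$. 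For $|t| > 1/b$ use $|t|^{-K-1}(1+|t|)^k (b|t|)^{-N}$ with $N$ large; this gives $b^{K-k}$ as well.
\item \emph{The $a_1$-term.} Here $a_1 \sim \beta$ is large and $x^{-K}$ helps. Use $|\sin(a_1t)/t|^{K+1} \leq \min(a_1,1/|t|)^{K+1}$ together with the prefactor $x^{-K}$, and bound $(1+|t|)^k \leq 2^k\max(1,|t|)^k$.
\begin{itemize}
\item For $|t|\leq 1/a_1$ the contribution is $\lesssim x^{-K} a_1^{K}\cdot O(1) = O(1)$.
\item For $1/a_1<|t|<1$ we get $\int x^{-K}|t|^{-K-1}\,{\rm d}t$. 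When $K\geq1$ this is $\lesssim x^{-K}a_1^K = O(1)$; the logarithm would appear only for $K = 0$.
\item For $1 \leq |t| \leq 1/b$ the integrand is $|t|^{k-K-1}$. This gives $\log(1/b)$ if $k=K$ and $b^{K-k}$ if $k>K$.
\item For $|t| > 1/b$ the $\widehat\rho$ decay gives $b^{K-k}$ again.
\end{itemize}
\end{itemize}

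Finally, since $\beta\geq1$ we have $\log(1/b) \lesssim 1+\log(\beta/(\beta-\alpha))$, and $b \sim \beta-\alpha$. Combining the two terms yields the bound in (iv).
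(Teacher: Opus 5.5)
Your proposal is correct and uses the same construction as the paper. You take $\varphi=\psi\ast\theta_b$, with $\psi$ from Lemma \ref{cstr-part1} and a mollifier at scale $b=\frac{\beta-\alpha}{K+2}$ having unit mass and vanishing moments of orders $1,\dots,K$. Properties (i) and (ii) are argued exactly as in the paper; the differences are only in how the estimates for (iii) and (iv) are carried out.

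\textbf{Property (iii).} The paper applies Taylor's theorem to $\kappa$ at $\alpha+b$. It then bounds the mollifier in sup norm, $|\theta_b|\leq\|\theta\|_\infty/b$, over the window $[\max\{y-b,\alpha+b\},y+b]$, whose length is at most $y-\alpha$. This gives $(y-\alpha)^K$ for all $y\in[x,\beta]$ at once. So the loss $b(y-\alpha)^{K-1}$ you ran into, which comes from using only $\|\rho_b\|_{L^1}=O(1)$, never arises.

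Your alternative is equally valid: apply Taylor's theorem to the smooth function $\kappa\ast\rho_b$ at $\alpha$, using $(\kappa\ast\rho_b)^{(K)}=\kappa^{(K-1)}\ast\rho_b'=O(b)\cdot O(b^{-1})$. In fact this argument covers all of $[\alpha,\beta]$ by itself, so your case split is unnecessary. One small correction: $\kappa\ast\rho_b$ vanishes on $[0,\alpha]$, not on $(-\infty,\alpha]$, since for very negative $y$ one has $\kappa=-P\neq0$. This still suffices because $\alpha\geq1>0$.

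\textbf{Property (iv).} The paper proves one auxiliary estimate for $\int|\sin(at)/t|^{K+1}|\widehat{\theta}(\varepsilon t)|(1+|t|)^k\,{\rm d}t$ and applies it to both terms. It uses $a_2\leq b/2\leq a_1$, $1+a_1^K\leq2x^K$ and $a_1/b\leq(K+2)\beta/(\beta-\alpha)$. Your direct splitting of the frequency axis at $1/a_1$, $1$ and $1/b$ is just as short and slightly sharper. It shows the logarithm is needed only when $k=K$, and then only in the form $\log(1/b)$.
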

\begin{proof}
Set  $b = \frac{\beta - \alpha}{K+2}$. Choose $\theta \in \mathcal{D}((-1,1))$ such that 
\begin{equation}
\label{vanmom}
\int_{-\infty}^\infty \theta(y){ \rm d} y = 1, \qquad \int_{-\infty}^\infty y^j\theta(y){ \rm d} y  = 0, \qquad j = 1, \ldots, K,
\end{equation}
and write $\theta_b(y)= b^{-1}\theta(y/b)$. Let $\psi$ be as in Lemma \ref{cstr-part1} if $K \geq 1$ and $\psi = \chi_a$ with $a  = \frac{\alpha + \beta}{2}$ if $K = 0$. Define
$\varphi =\psi \ast \theta_b$. We now show that $\varphi$ satisfies all requirements.

Properties (i)--(iii) are obvious if $K = 0$. Assume that $K \geq 1$. Then, (i) follows from  Lemma \ref{cstr-part1}(i) and  \eqref{vanmom}, while (ii) is a consequence of Lemma \ref{cstr-part1}(ii). We now  show (iii). As in  Lemma \ref{cstr-part1}(iii),  we define $\kappa(y) = \psi(y) - \left(1-\frac{y}{x}\right)^K$. By \eqref{vanmom}, we obtain that, for $y \in [x, \beta]$,
$$
\varphi(y) = \kappa \ast \theta_b(y) + \left(1-\frac{\cdot}{x}\right)^K*\theta_b(y) = \kappa \ast \theta_b(y) + \left(1-\frac{y}{x}\right)^K.
$$
Note that
$$
 \left|1-\frac{y}{x}\right|^K = \frac{(y-x)^K}{x^K} \leq (y-\alpha)^K.
$$
We now bound the first term $\kappa \ast \theta_b(y)$.  Lemma \ref{cstr-part1}(i) yields that $\kappa^{(j)}(w)=0$ for  $w \in [-b, \alpha + b]$ and $j=0,\dots, K-1$. Hence, by Taylor's theorem, we find that
   \begin{align*}
\kappa \ast \theta_b(y) &= \int_{y-b}^{y+b} \kappa(w)\theta_b(y-w)\text{d}w = \int_{\max\{y-b, \alpha+b\}}^{y+b} \kappa(w)\theta_b(y-w)\text{d}w  \\
& =   \frac{1}{(K-1)!} \int_{\max\{y-b, \alpha+b\}}^{y+b} \kappa^{(K-1)}(\xi)(w-(\alpha+b))^{K-1}\theta_b(y-w)\text{d}w,
\end{align*}
for some $\xi = \xi_w \in [\alpha + b, w]$. Lemma  \ref{cstr-part1}(iii) implies that $|\kappa^{(K-1)}(\xi)| \leq K!(2^{K-1} + K+3)b$.
We get that
   \begin{align*}
|\kappa \ast \varphi_b(y)| &\leq  \frac{1}{(K-1)!} \int_{{\max\{y-b, \alpha+b\}}}^{y+b} |\kappa^{(K-1)}(\xi)| (w-(\alpha +b))^{K-1} |\theta_b(y-w)|\text{d}w \\
& \leq K(2^{K-1} + K+3)b \int_{{\max\{y-b, \alpha+b\}}}^{y+b} (w-(\alpha +b))^{K-1} |\theta_b(y-w)|\text{d}w \\
&\leq (2^{K-1} + K+3) \| \theta \|_\infty (y-\alpha)^K.
     \end{align*}
This proves (iii).

Finally, we consider (iv).  Fix $k \geq K$. We start by showing that there is $C >0$ such that, for all $a>0$ and $0 < \varepsilon \leq 1$,
\begin{equation}
\label{eqinproof}
\int_{-\infty}^\infty  \left |\frac{\sin(at)}{t}\right|^{K+1}|\widehat{\theta}(\varepsilon t)|(1+\abs{t})^k\dt \leq C \frac{(1+a^K)}{\varepsilon^{k-K}} \left(1+\log_+\left(\frac{a}{\varepsilon}\right) \right).
\end{equation}
We have that
\begin{align*}
&\int_{-\infty}^\infty  \left |\frac{\sin(at)}{t}\right |^{K+1}|\widehat{\theta}(\varepsilon t)|(1+\abs{t})^k\dt \\
&= \frac{1}{\varepsilon^{k-K}}\left(\int_{|t| \leq 1} + \int_{|t| \geq 1} \right)\left |\frac{\sin\left(\frac{a}{\varepsilon}t\right)}{t}\right|^{K+1}|\widehat{\theta}(t)|\left(\varepsilon+|t|\right)^k\dt.
\end{align*}
For the first integral, we note that  $(\varepsilon+|t|)^k \leq 2^{k-K}(\varepsilon+|t|)^K$ if  $t \in [-1,1]$. Hence,
\begin{align*}
&\int_{|t| \leq 1}\left |\frac{\sin\left (\frac{a}{\varepsilon}t\right)}{t}\right|^{K+1}|\widehat{\theta}(t)|\left(\varepsilon+|t|\right)^k\dt \\
&\leq 2^{k-K+1} \| \widehat{\theta}\|_\infty \int_{0}^1 \left |\frac{\sin\left(\frac{a}{\varepsilon}t\right)}{t}\right|^{K+1}\left(\varepsilon+t\right)^K\dt \\
&= 2^{k-K+1} \| \widehat{\theta}\|_\infty \int_{0}^{a/\varepsilon} \left |\frac{\sin t}{t}\right |^{K+1}  \left(a+t\right)^K\dt \\
&\leq 2^{k+1} \| \widehat{\theta}\|_\infty \int_{0}^{a/\varepsilon}\left | \frac{\sin t}{t}\right |^{K+1}\left(a^K+t^K\right)\dt \\
&\leq 2^{k+1} \| \widehat{\theta}\|_\infty (1+a^K) \int_{0}^{a/\varepsilon}\left |\frac{\sin t}{t}\right |\dt \\
&\leq 2^{k+1} \| \widehat{\theta}\|_\infty (1+a^K)\left(1+\log_+\left(\frac{a}{\varepsilon}\right) \right).
 \end{align*}
 For the second integral, we find that
$$
 \int_{|t| \geq 1}\left |\frac{\sin\left(\frac{a}{\varepsilon}t\right)}{t} \right |^{K+1}|\widehat{\theta}(t)|\left(\varepsilon+|t|\right)^k\dt \leq \int_{|t| \leq 1} |\widehat{\theta}(t)|\left(1+|t|\right)^k\dt.
$$
This shows \eqref{eqinproof}.

For $K =0$, the result now follows from Lemma \ref{xi}(v) and  \eqref{eqinproof}. Assume that $K \geq 1$.
 As in Lemma \ref{cstr-part1}(iv), we set $a_1 = (x+b)/2$. Then,  Lemma \ref{cstr-part1}(iv),  \eqref{eqinproof}, and $a_1 \geq b/2$   imply that
\begin{align}
\label{eqinproof2}
&\int_{-\infty}^\infty | \widehat{\varphi}(t)| (1+\abs{t})^k\dt = \int_{-\infty}^\infty  |\widehat{\psi}(t)| |\widehat{\theta}(bt)| (1+\abs{t})^k\dt  \\ \nonumber
& \leq \frac{2^{K+2}K!C}{x^K}\frac{(1+a_1^K)}{b^{k-K}} \left(1+\log_+\left(\frac{a_1}{b}\right) \right),
\end{align}
Since $b \leq 1 \leq x \leq \beta$, we obtain that $1 + a_1^K \leq 2x^K$ 
and 
$$
\frac{a_1}{b} = \frac{K+2}{2} \frac{x+b}{\beta-\alpha} \leq (K+2) \frac{\beta}{\beta-\alpha}.
$$
The result now follows by plugging these inequalities into \eqref{eqinproof2}.
\end{proof}

\subsection{Maximal inequalities} We are ready to show the maximal inequalities underlying Theorem \ref{main-article}. We first consider Riesz means of integral order.

\begin{proposition}\label{max-ineq-1}
Let  $\lambda$ be a frequency, $K \in \N_0$, and $k \geq K$. There exists $C >0$ such that, for all $n,m \in \N$, $m >n$, with $\lambda_m  - \lambda_n \leq 2$, 
    \[ \sup_{\lambda_n \leq x \leq \lambda_{n+1}}\norm{R_x^{\lambda,K}}_{\mathcal{L}(\H k)}\leq \frac{C}{(\lambda_{n+1} - \lambda_n)^{k-K}}\left(\log\left(\frac{\lambda_m}{\lambda_m-\lambda_n}\right)+\sum_{j=n+1}^{m-1}\left(\frac{\lambda_j-\lambda_n}{\varepsilon_j}\right)^k\right).\]
\end{proposition}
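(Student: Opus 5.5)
The plan is Bayart's splitting argument, using Proposition~\ref{cstr-part2} as the mollifier. Fix $n<m$ with $\lambda_m-\lambda_n\le 2$ and $x\in[\lambda_n,\lambda_{n+1}]$.

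\textbf{Choice of the mollifier.} First assume $\lambda_n\ge 1$. Apply Proposition~\ref{cstr-part2} (or, for $K=0$, its $K=0$ variant from the proof) with $\alpha=\lambda_n$, $\beta=\lambda_m$ and this $x$. This gives $\varphi\in\mathcal D(\R)$ with the following properties:
\begin{itemize}
\item[(a)] $\varphi(y)=(1-y/x)^K$ on $[0,\lambda_n]$;
\item[(b)] $\supp\varphi\subseteq(-\infty,\lambda_m]$, so $\varphi(\lambda_j)=0$ for $j\ge m$;
\item[(c)] $|\varphi(y)|\le C(y-\lambda_n)^K$ on $[x,\lambda_m]$;
\item[(d)] the Fourier bound of Proposition~\ref{cstr-part2}(iv).
\end{itemize}
For $j\ge n+1$ we have $\lambda_j\ge x$, so the Riesz kernel contributes nothing there (for $K=0$ and $\lambda_{n+1}=x$ one uses the convention at the endpoint, or lets $x\uparrow\lambda_{n+1}$). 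Hence, for $D\in\H k$ and $s\in\C_0$,
$$
R^{\lambda,K}_x(D)(s)=\sum_{j=1}^\infty a_j(D)\varphi(\lambda_j)e^{-\lambda_j s}-\sum_{j=n+1}^{m-1}a_j(D)\varphi(\lambda_j)e^{-\lambda_j s}.
$$

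\textbf{The two sums.} For the first sum, Proposition~\ref{saksmanth} and $(1+|s+it|)^k\le(1+|s|)^k(1+|t|)^k$ bound it by
$$
\frac{1}{2\pi}\|D\|_{\H k}(1+|s|)^k\int|\widehat\varphi(t)|(1+|t|)^k\,{\rm d}t .
$$
By (d) this is at most
$$
C\|D\|_{\H k}(1+|s|)^k(\lambda_m-\lambda_n)^{K-k}\Bigl(1+\log\tfrac{\lambda_m}{\lambda_m-\lambda_n}\Bigr),
$$
and $(\lambda_m-\lambda_n)^{K-k}\le(\lambda_{n+1}-\lambda_n)^{K-k}$ since $k\ge K$. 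For the second sum, every index satisfies $\lambda_j\in[x,\lambda_m)$. Theorem~\ref{coeff_bounds}, property (c) and $|e^{-\lambda_j s}|\le1$ bound each term by $C\|D\|_{\H k}\,\varepsilon_j^{-k}(\lambda_j-\lambda_n)^K$. Writing this as
$$
\Bigl(\frac{\lambda_j-\lambda_n}{\varepsilon_j}\Bigr)^k(\lambda_j-\lambda_n)^{K-k}\le\Bigl(\frac{\lambda_j-\lambda_n}{\varepsilon_j}\Bigr)^k(\lambda_{n+1}-\lambda_n)^{K-k}
$$
gives the sum term of the claim. Dividing by $(1+|s|)^k$ and taking the supremum over $s\in\C_0$ bounds $\|R^{\lambda,K}_x\|_{\mathcal L(\H k)}$.

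\textbf{Expected obstacle: bookkeeping, not analysis.} The claimed bound has $\log\frac{\lambda_m}{\lambda_m-\lambda_n}$ rather than $1+\log(\cdot)$, and Proposition~\ref{cstr-part2} needs $\alpha\ge1$. If $\lambda_n\ge 2e$, then $\lambda_m/(\lambda_m-\lambda_n)\ge\lambda_m/2\ge e$, so the $1$ is absorbed into the logarithm. The remaining indices $n$ with $\lambda_n<2e$ form a finite set. For those, Theorem~\ref{coeff_bounds} gives a uniform bound on $\sup_{x\le\lambda_{n+1}}\|R^{\lambda,K}_x\|$, since only finitely many coefficients enter, each bounded by $C\varepsilon_j^{-k}\|D\|_{\H k}$. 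For each of these finitely many $n$, the right-hand side is bounded below by a positive constant uniformly in $m$: the logarithm term is at least $\log(\lambda_m/2)\ge\log(\lambda_{n+1}/2)$, which is positive unless $\lambda_{n+1}$ is small. In the degenerate case ($\lambda_n=0$, $m=n+1$) one checks directly, or treats it by the convention that the constant may depend on $\lambda$. Enlarging $C$ then covers these cases.
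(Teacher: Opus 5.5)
Your proposal is correct and is essentially the paper's proof: the same mollifier from Proposition~\ref{cstr-part2} with $\alpha=\lambda_n$, $\beta=\lambda_m$, the same split into a full sum bounded by Proposition~\ref{saksmanth} and the Fourier estimate, plus a finite correction bounded by Theorem~\ref{coeff_bounds} and property (iii), and the same preliminary reduction (the paper assumes $\lambda_m\ge 2e$, which also gives $\lambda_n\ge 1$) to absorb the $1$ into the logarithm; for the leftover small cases, the clean reason they are harmless is that only finitely many pairs $(n,m)$ remain, as $\lambda_m\le\lambda_n+2$. The edge cases you flag cannot be rescued by any constant, so they are defects of the statement, silently shared by the paper and harmless for its later use, and should be excluded rather than ``checked directly'': if $\lambda_1=0$, the pair $n=1$, $m=2$ makes the right-hand side vanish, and for $K=0$, $m=n+1$, $x=\lambda_{n+1}$ the mean picks up $a_{n+1}$, which can be of size $\varepsilon_{n+1}^{-k}\|D\|_{\H k}$ (test on $e_s[\lambda_{n+1},\lambda_{n+2}]$ as in Theorem~\ref{optimal}).
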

\begin{proof}
Fix $n,m \in \N$, $m >n$, with $\lambda_m  - \lambda_n \leq 2$. By Theorem \ref{coeff_bounds}, we may assume that $\lambda_m \geq 2e$. Let $x \in [\lambda_n, \lambda_{n+1}]$. Choose $\varphi$ as in Proposition \ref{cstr-part2} with $\alpha = \lambda_n$ and $\beta = \lambda_m$. Then, for all $D \in \H k$, 
\[R_x^{\lambda, K}(D)(s)=\sum_{j=1}^\infty a_j(D) \varphi(\lambda_j)e^{-\lambda_j s}-\sum_{j=n+1}^{m-1}a_j(D)\varphi(\lambda_{j})e^{-\lambda_{j}s}, \qquad s \in \C_0.\]
Propositions \ref{saksmanth} and \ref{cstr-part2}(iv) imply that there is $C_1 >0$ (independent of $n$, $m$, and $x$) such that
\begin{align*}
&\left |\sum_{j=1}^\infty a_j(D) \varphi(\lambda_j)e^{-\lambda_j s} \right| = \frac{1}{2\pi} \left | \int_{-\infty}^\infty f_D(s+it) \widehat{\varphi}(-t) {\rm d} t \right  | \\&\leq \frac{1}{2\pi} \| D\|_{\H k}(1+|s|)^k \int_{-\infty}^\infty | \widehat{\varphi}(t)| (1+\abs{t})^k {\rm d} t  \\
& \leq   \frac{C_1}{2\pi(\lambda_m -\lambda_n)^{k-K}}\left(1+\log\left(\frac{\lambda_m}{\lambda_m-\lambda_n}\right) \right) \| D\|_{\H k}(1+|s|)^k  \\ 
& \leq  \frac{C_1}{\pi(\lambda_{n+1} -\lambda_n)^{k-K}} \log\left(\frac{\lambda_m}{\lambda_m-\lambda_n}\right) \| D\|_{\H k}(1+|s|)^k . 
\end{align*}
By Theorem \ref{coeff_bounds} and Proposition  \ref{cstr-part2}(iv), there is $C_2 >0$ (independent of $n$, $m$, and $x$) such that
\begin{align*}
\left |\sum_{j=n+1}^{m-1}a_j(D)\varphi(\lambda_{j})e^{-\lambda_{j}s} \right| &\leq C_2 \| D\|_{\H k} \sum_{j=n+1}^{m-1}\left(\frac{1}{\varepsilon_j}\right)^k (\lambda_j-\lambda_n)^K \\
&\leq   \frac{C_2}{\pi(\lambda_{n+1} -\lambda_n)^{k-K}}  \| D\|_{\H k} \sum_{j=n+1}^{m-1}\left(\frac{\lambda_j-\lambda_n}{\varepsilon_j}\right)^k.
\end{align*}
This shows the result.
\end{proof}

Next, we present a maximal inequality for general (non-integral) orders. Given a function $m: \N \to \N$, we define  $\tilde{m}(n)=\sup\{ j \in \N \mid m(j)\leq n\}$.
\begin{proposition}\label{max-ineq-2}
Let $\lambda$ be a frequency and $k \geq 0$. Let $m: \N \to \N$ be a function such that $m(n) > n$ and $\lambda_{m(n)}  - \lambda_n \leq 2$ for all $n \in \N$. There exists $C >0$ such that, for all $n \in \N$,
    \[  \sup_{\lambda_n \leq x \leq \lambda_{n+1}}\norm{R_x^{\lambda,k}}_{\mathcal{L}(\H k)}\leq C \sum_{l=\tilde{m}(n)}^{n}\left(\log\left(\frac{\lambda_{m(l)}}{\lambda_{m(l)}-\lambda_l}\right) +\sum_{j=l+1}^{m(l)-1}\left(\frac{\lambda_j-\lambda_l}{\varepsilon_j}\right)^k\right).\]
\end{proposition}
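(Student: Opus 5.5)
We plan to reduce to the integral case via the Hardy–Riesz technique: write $k = K + \varepsilon$ with $K = \lfloor k \rfloor$ and $0 < \varepsilon \leq 1$. If $k$ is an integer, Proposition \ref{max-ineq-1} with $m = m(n)$ gives the bound directly, since $\lambda_{n+1} - \lambda_n$ appears with exponent $k - K = 0$, and the term $l = n$ belongs to the sum. So assume $k \notin \N_0$, so $\varepsilon \in (0,1)$.

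Fix $D \in \H k$, $s \in \C_0$, and $x \in [\lambda_n, \lambda_{n+1}]$. Apply Lemma \ref{technical-0} to the series $C = \sum_j a_j(D) e^{-\lambda_j s}$ with orders $K$ and $\varepsilon$:
\[
S^{\lambda,k}_x = c\int_0^x S^{\lambda,K}_y (x-y)^{\varepsilon-1}\,{\rm d}y, \qquad c = \frac{\Gamma(k+1)}{\Gamma(K+1)\Gamma(\varepsilon)} .
\]
Split the integral at $\xi = \lambda_{\tilde m(n)}$ (use $\xi = 0$ if $\tilde m(n)$ is undefined or small). For the piece $[0,\xi]$, Lemma \ref{technical} bounds it by $2\sup_{y\leq \xi}|S^{\lambda,K+1}_y|$. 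Here $K+1 > k$, so Theorem \ref{lkasymp} gives $|S^{\lambda,K+1}_y| \leq C_{k,K+1}\, y^{K+1}\|D\|(1+|s|)^k$. The condition $m(\tilde m(n)) \leq n$ forces $x - \xi \geq \lambda_{n} - \lambda_{m(\tilde m(n))}$, which is not useful directly. A better choice is the rewriting $\int_0^\xi S_y^K (x-y)^{\varepsilon-1}\,{\rm d}y$, which Lemma \ref{technical} bounds by $\sup_{y\leq\xi}|S^{\lambda,k}_y|$, i.e.\ the same quantity at earlier points. To avoid circularity, I would instead use Theorem \ref{lkasymp} with $l = K+1$ on this piece and then divide by $x^k$. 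This produces a constant times $x^{K+1-k} = x^{1-\varepsilon}$, which is bad for large $x$.

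I must therefore handle this more carefully. Following \cite[Theorem 22]{hardy}, I would split at $\xi = x - 1$ instead (or at $\lambda_{\tilde m(n)}$, whichever is larger). On $[0,\xi]$, write $(x-y)^{\varepsilon-1}$, integrate by parts $K+1-K$ times, converting to $S^{\lambda,K+1}_y$ against $(x-y)^{\varepsilon-2}$. The boundary term and the integral are then bounded by $\sup_y |S^{\lambda,K+1}_y|\cdot O(1)$, because $\int_{-\infty}^{x-1}(x-y)^{\varepsilon-2}\,{\rm d}y < \infty$. After dividing by $x^k$, Theorem \ref{lkasymp} yields $O(x^{K+1-k})$, which is still polynomially growing.

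The key is that only differences matter. Use $S^{K+1}_y \approx (K+1)\int S^K$ together with $|S^K_y| \leq y^K \sup\|R^K_y\|$, and bound $\sup\|R_y^K\|$ over $y \in [x-1,x]$ by Proposition \ref{max-ineq-1}.

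On the near piece $[\xi, x]$, where $y \in [\lambda_l, \lambda_{l+1}]$ with $\tilde m(n) \leq l \leq n$, I bound $|S^{\lambda,K}_y| \leq y^K \|R^{\lambda,K}_y\|\,\|D\|(1+|s|)^k$. Proposition \ref{max-ineq-1}, applied with $(l, m(l))$, is valid because $\lambda_{m(l)} - \lambda_l \leq 2$, and gives the factor $(\lambda_{l+1}-\lambda_l)^{-(k-K)}$ times the bracket for $l$. Integrating $(x-y)^{\varepsilon-1}$ over $[\lambda_l, \lambda_{l+1}] \cap [\xi,x]$ gives at most $\varepsilon^{-1}(\lambda_{l+1}-\lambda_l)^{\varepsilon}$. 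Since $\varepsilon = k-K$, this exactly cancels the singular factor, and summing over $l$ produces the claimed sum. Dividing by $x^k$ and using $y \leq x$ gives the bound for $R^{\lambda,k}_x$.

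The main obstacle is the far piece $[0,\xi]$. It must be shown to contribute only $O(1)$ uniformly in $x$, and the bound must not involve a sum over all $l$. My plan there is to use Lemma \ref{technical} in the form $|\cdot| \leq 2\sup_{y\leq\xi}|S^{\lambda,k}_y|$ and then run an induction or contraction argument. Failing that, I would integrate by parts once to $S^{K+1}$, whose normalised mean is bounded by Theorem \ref{lkasymp}. The resulting kernel $(x-y)^{\varepsilon-2}$ has mass $O(1)$ away from the diagonal, giving an $O(x^{K+1}/x^k)$ term. I would control this by comparing with $R^{K+1}_x$, via the identity $S^{K+1}_y - S^{K+1}_x$ expanded near $x$.
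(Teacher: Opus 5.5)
Your integral case and your treatment of the near part $\int_{\lambda_{\tilde m(n)}}^{x}$ are correct and match the paper: on each $[\lambda_l,\lambda_{l+1}]$, the factor $(\lambda_{l+1}-\lambda_l)^{k-K}$ from integrating $(x-y)^{k-K-1}$ cancels the singular factor in Proposition \ref{max-ineq-1}.

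\textbf{The gap.} The far part $\int_0^{\lambda_N}S^{\lambda,K}_y(D)(s)(x-y)^{k-K-1}\,{\rm d}y$, with $N=\tilde m(n)$, is not handled. You flag it as the main obstacle, and none of your candidate routes works.
\begin{itemize}
\item Anything that passes to order $K+1$ (Lemma \ref{technical} directly, or integrating by parts against $(x-y)^{k-K-2}$) loses a factor $x^{K+1-k}$ after dividing by $x^k$, as you observe. This loss cannot be absorbed: for $\lambda_n=n$, $m(n)=n+1$, the right-hand side is only $O(\log x)$.
\item The induction/contraction idea also fails. Lemma \ref{technical} at order $k$ gives $\abs{R^{\lambda,k}_x}\le 2\sup_{y\le\lambda_N}(y/x)^k\abs{R^{\lambda,k}_y}+\dots$. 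Since $\lambda_N/x$ can be arbitrarily close to $1$, the factor $2$ compounds under iteration and yields exponential growth.
\item The lower bound you extract from $m(\tilde m(n))\le n$ is not the useful one. What you need is $x-\lambda_N\ge\lambda_{m(N)}-\lambda_N$, which holds because $x\ge\lambda_n\ge\lambda_{m(N)}$.
\end{itemize}

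\textbf{The missing idea.} Go only slightly above order $k$ and use the quantitative part of Theorem \ref{lkasymp}, namely $C_{k,k+\delta}\le C/\delta$.

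Take $0<\delta<1-(k-K)$. Apply the second mean value theorem to the real and imaginary parts separately, with the increasing weight $(x-y)^{-\delta}$ on $[0,\lambda_N]$. This gives, for some $\xi\in[0,\lambda_N]$,
\[
\int_0^{\lambda_N} S^{\lambda,K}_y(D)(s)(x-y)^{k-K-1}\,{\rm d}y=\frac{1}{(x-\lambda_N)^{\delta}}\Bigl(\int_0^{\lambda_N}-\int_0^{\xi}\Bigr)S^{\lambda,K}_y(D)(s)(x-y)^{k-K+\delta-1}\,{\rm d}y .
\]
Now apply Lemma \ref{technical} with exponent $k-K+\delta\le 1$ to both integrals, then Theorem \ref{lkasymp} at order $k+\delta$. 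Use also $\abs{S^{\lambda,k+\delta}_y}\le\lambda_N^{\delta}x^k\abs{R^{\lambda,k+\delta}_y}$ for $y\le\lambda_N$, together with $x-\lambda_N\ge\lambda_{m(N)}-\lambda_N$. The far part is then at most
\[
\frac{C}{\delta}\Bigl(\frac{\lambda_N}{\lambda_{m(N)}-\lambda_N}\Bigr)^{\delta}x^k\norm{D}_{\H k}(1+\abs{s})^k .
\]

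Choose $\delta=1/\log\bigl(\lambda_N/(\lambda_{m(N)}-\lambda_N)\bigr)$. This is admissible for large $N$ because $\lambda_{m(N)}-\lambda_N\le 2$; small $n$ are covered by Theorem \ref{coeff_bounds}. The bound becomes $eC\log\bigl(\lambda_N/(\lambda_{m(N)}-\lambda_N)\bigr)x^k\norm{D}_{\H k}(1+\abs{s})^k$. This is dominated by the logarithmic part of the $l=N$ summand.

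Balancing the $1/\delta$ blow-up against the $(\lambda_N/(x-\lambda_N))^{\delta}$ loss is exactly where the logarithm in the statement comes from. Without this step your argument does not establish the proposition.
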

    \begin{proof}
If $k \in \N$, the result follows from  Proposition \ref{max-ineq-1} with $K = k$. Assume that $k \notin \N$ and set $K=\floor{k}$. Fix $n \in \N$ and set $N = \tilde{m}(n)$. By Theorem \ref{coeff_bounds}, it suffices to show  the result for $N$ large. Let $x \in [\lambda_n, \lambda_{n+1}]$.  Lemma \ref{technical-0} yields that, for all $s \in \C_0$,
        \begin{align}
          \label{splitint}
          &  S_x^{\lambda,k}(D)(s) = \frac{\Gamma(k +1)}{K!\Gamma(k-K)}\int_0^x S_y^{\lambda, K} (D)(s)(x-y)^{k-K-1}{\rm d} y\\ \nonumber
            &= \frac{\Gamma(k +1)}{K!\Gamma(k-K)} \left (\int_0^{\lambda_{N}}  + \sum_{l=N}^{n-1}\int_{\lambda_{l}}^{\lambda_{l+1}} + \int_{\lambda_{n}}^{x}   \right) S_y^{\lambda, K} (D)(s)(x-y)^{k-K-1}{\rm d} y.        \end{align}
We start with the first integral. Let $\varepsilon >0$ be any number such that $k-K+\varepsilon<1$ (the value of $\varepsilon$ will be chosen later). By the second mean value theorem for integrals, there is $\xi \in [0,\lambda_N]$ such that    
  \begin{align*}
&\int_0^{\lambda_{N}}  S_y^{\lambda, K} (D)(s)(x-y)^{k-K-1}{\rm d} y \\ 
&= \int_0^{\lambda_{N}}  \frac{S_y^{\lambda, K} (D)(s)(x-y)^{k-K  + \varepsilon-1}}{(x-y)^\varepsilon}{\rm d} y  \\
&= \frac{1}{(x-\lambda_{N})^{\varepsilon}} \int_{\xi}^{\lambda_{N}}  S_y^{\lambda, K} (D)(s)(x-y)^{k-K  + \varepsilon-1} {\rm d} y \\
&= \frac{1}{(x-\lambda_{N})^{\varepsilon}}   \left ( \int_{0}^{\lambda_{N}} - \int_{0}^{\xi}   \right) S_y^{\lambda, K} (D)(s)(x-y)^{k-K  + \varepsilon-1} {\rm d} y.
\end{align*}
By applying Lemma \ref{technical} to both of these integrals and using Theorem \ref{lkasymp}, we find that there are $C_1, C_2 >0$ (only depending on $k$) such that
  \begin{align*}
\left|  \int_0^{\lambda_{N}}  S_y^{\lambda, K} (D)(s)(x-y)^{k-K-1}{\rm d} y \right | &\leq C_1\frac{1}{(x-\lambda_{N})^{\varepsilon}}  \sup_{y\leq \lambda_N} |S_y^{\lambda, k+\varepsilon} (D)(s)| \\
&\leq C_1\left(\frac{\lambda_N}{\lambda_{m(N)}-\lambda_{N}}\right)^{\varepsilon}  x^{k} \sup_{y\leq \lambda_N} |R_y^{\lambda, k+\varepsilon} (D)(s)| \\
&\leq \frac{C_2}{\varepsilon}\left(\frac{\lambda_N}{\lambda_{m(N)}-\lambda_{N}}\right)^{\varepsilon} x^k\norm{D}_{\H k}(1+|s|)^k.
  \end{align*}
 We now choose $\varepsilon=1/\log\left(\frac{\lambda_N}{\lambda_{m(N)}-\lambda_N}\right)$, as $\lambda_{m(N)}-\lambda_N \leq 2$, this becomes small enough for large enough $N$. Then, 
\begin{align*}
 &\left|  \int_0^{\lambda_{N}}  S_y^{\lambda, K} (D)(s)(x-y)^{k-K-1}{\rm d} y \right | \\ 
 & \leq eC_2\log\left(\frac{\lambda_N}{\lambda_{m(N)}-\lambda_N}\right)x^k\norm{D}_{\H k}(1+|s|)^k.
\end{align*}
For the other integrals in \eqref{splitint},  Proposition \ref{max-ineq-1} implies that there is $C_3 >0$ (only depending on $k$) such that,                
 for $l = N, \ldots, n-1$, 
\begin{align*}
      &\left |     \int_{\lambda_{l}}^{\lambda_{l+1}}  S_y^{\lambda, K} (D)(s)(x-y)^{k-K-1}{\rm d} y \right |  \\ 
      &\leq \sup_{\lambda_l\leq y \leq \lambda_{l+1}}| S_y^{\lambda, K} (D)(s)|\int_{\lambda_l}^{\lambda_{l+1}}(x-y)^{k-K-1}\text{d}y\\ 
            &\leq  \frac{1}{k-K} x^k  \sup_{\lambda_{l}\leq y\leq \lambda_{l+1}}| R_y^{\lambda, K} (D)(s)| (\lambda_{l+1}-\lambda_{l})^{k-K} \\ 
                 & \leq C_3 \left(\log\left(\frac{\lambda_{m(l)}}{\lambda_{m(l)}-\lambda_l}\right)+\sum_{j=l+1}^{m(l)-1}\left(\frac{\lambda_j-\lambda_l}{\varepsilon_j}\right)^k\right)  x^k \|D\|_{\H k}(1+|s|)^k.
        \end{align*}
        and
\begin{align*}
   &  \left |     \int_{\lambda_{n}}^{x}  S_y^{\lambda, K} (D)(s)(x-y)^{k-K-1}{\rm d} y \right | \\ 
   &\leq \sup_{\lambda_n\leq y \leq x}| S_y^{\lambda, K} (D)(s)|\int_{\lambda_n}^{x}(x-y)^{k-K-1}\text{d}y\\ 
                        &\leq \frac{1}{k-K} x^k  \sup_{\lambda_n\leq y \leq \lambda_{n+1}}| R_y^{\lambda, K} (D)(s)|(\lambda_{n+1}-\lambda_n)^{k-K}. \\
                      &  \leq C_3 \left(\log\left(\frac{\lambda_{m(n)}}{\lambda_{m(n)}-\lambda_n}\right)+\sum_{j=n+1}^{m(n)-1}\left(\frac{\lambda_j-\lambda_n}{\varepsilon_j}\right)^k\right) x^k \|D\|_{\H k}(1+|s|)^k.
                                          \end{align*}  
This shows the result.                            
    \end{proof}
    
\subsection{ Proof of Theorem \ref{main-article}} In order to deduce Theorem \ref{main-article} from the maximal inequality in Proposition \ref{max-ineq-2} together with the Bohr--Cahen formula, we need a technical result about the condition (SNC).  We start with the following lemma.
\begin{lemma} \label{techfreq0}
Let $\lambda$ be a frequency satisfying (SNC) and $k \geq 0$.  For every $\delta >0$, there are $C >0$ and a function $m: \N \to \N$ such that, for all $n \in \N$, $m(n) > n$, $n - \tilde{m}(n) +1 \leq C e^{\delta \lambda_n}$, and
\begin{equation}
\label{step0tech}
\sum_{l=\tilde{m}(n)}^{n}\left(\log\left(\frac{\lambda_{m(l)}}{\lambda_{m(l)}-\lambda_l}\right) +\sum_{j=l+1}^{m(l)-1}\left(\frac{\lambda_j-\lambda_l}{\varepsilon_j}\right)^k\right) \leq Ce^{\delta \lambda_n}.
\end{equation}
\end{lemma}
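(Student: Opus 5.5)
The plan is to apply (SNC) with a smaller parameter $\delta'$, to be chosen depending on $\delta$ and $k$, and to let $m(n)$ be an index $m>n$ that it provides. Write $x_{l,j} = (\lambda_j-\lambda_l)/\varepsilon_j$ for $l<j<m(l)$. For each $l$, (SNC) then gives
$$
\log\Bigl(\frac{\lambda_{m(l)}}{\lambda_{m(l)}-\lambda_l}\Bigr) + \sum_{j=l+1}^{m(l)-1} x_{l,j} \leq C' e^{\delta'\lambda_l}.
$$
Both pieces on the left are non-negative, since $\lambda_{m(l)}>\lambda_{m(l)}-\lambda_l>0$. So each piece is individually at most $C'e^{\delta'\lambda_l}$. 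Three things remain: replace $x_{l,j}$ by $x_{l,j}^k$, bound the number of indices $l$ in the outer sum, and use $\lambda_l\le\lambda_n$.

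\emph{Step 1: every term $x_{l,j}$ is at least $1$.} For $j\ge l+1$ we have $\lambda_j-\lambda_l\ge \lambda_j-\lambda_{j-1}\ge\varepsilon_j$, hence $x_{l,j}\ge 1$. This gives two consequences.
\begin{itemize}
\item[(a)] The number of inner terms satisfies $m(l)-l-1\le \sum_j x_{l,j}\le C'e^{\delta'\lambda_l}$. Hence $m(l)\le l+1+C'e^{\delta'\lambda_l}$.
\item[(b)] The powers can be controlled. If $k\le 1$, then $x_{l,j}^k\le x_{l,j}$. If $k\ge1$, then $\sum_j x_{l,j}^k\le(\sum_j x_{l,j})^k$.
\end{itemize}
In either case the bracket attached to $l$ in \eqref{step0tech} is at most $C''e^{\max(k,1)\delta'\lambda_l}\le C''e^{\max(k,1)\delta'\lambda_n}$ for $l\le n$.

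\emph{Step 2: counting the outer indices.} This is the step I expect to be the only delicate one, because $\tilde m(n)$ is defined as a supremum and could a priori be far below $n$. The idea is to use (a) to exhibit an explicit $j$ with $m(j)\le n$. Set $L=\lceil C'e^{\delta'\lambda_n}\rceil+1$.
\begin{itemize}
\item If $n-L\ge1$, take $j=n-L$. Since $\lambda_j\le\lambda_n$, (a) gives $m(j)\le n-L+1+C'e^{\delta'\lambda_n}\le n$. Hence $\tilde m(n)\ge n-L$, and therefore $n-\tilde m(n)+1\le L+1$.
\item If $n-L<1$, then $n\le L$. With the convention $\tilde m(n)\ge1$ (for instance $\tilde m(n)=1$ when the defining set is empty), the count is again at most $L+1$.
\end{itemize}
Also $\tilde m(n)\le n$, because $m(j)>j$ forces $j<m(j)\le n$ for every $j$ in the defining set. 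So the outer range is a genuine interval ending at $n$, and its length is at most $L+1\le (C'+3)e^{\delta'\lambda_n}$.

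\emph{Step 3: conclusion.} Multiplying the bound on the number of outer terms by the bound of Step 1 on each term, the left side of \eqref{step0tech} is at most a constant times $e^{(\max(k,1)+1)\delta'\lambda_n}$. Choosing $\delta'=\delta/(\max(k,1)+1)$ and enlarging $C$ gives both \eqref{step0tech} and $n-\tilde m(n)+1\le Ce^{\delta\lambda_n}$.
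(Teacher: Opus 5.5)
Your proposal is correct and follows essentially the same route as the paper: apply (SNC) with a reduced parameter, use $(\lambda_j-\lambda_l)/\varepsilon_j\ge 1$ both to pass to $k$-th powers and to bound $m(l)-l-1$, count the outer indices, and multiply the two bounds. The only difference is cosmetic and lies in the counting step. The paper uses that $m(\tilde m(n)+1)>n$, whereas you exhibit the explicit index $n-L$ in the set defining $\tilde m(n)$; you also fix a convention for the empty set, which the paper leaves implicit.
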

\begin{proof}
We first observe that, for every $\delta >0$, there are $C >0$ and a function $m: \N \to \N$ such that, for all $n \in \N$, $m(n) > n$ and 
\begin{equation}
\label{step1tech}
\log\left(\frac{\lambda_{m(n)}}{\lambda_{m(n)}-\lambda_n}\right) +\sum_{j=n+1}^{m(n)-1}\left(\frac{\lambda_j-\lambda_n}{\varepsilon_n}\right)^k \leq Ce^{\delta \lambda_n}.
\end{equation}
If $k \leq 1$, this is clear. For $k \geq1$, it follows from the fact that, for all $N \in \N$ and $a_1, \ldots, a_N \geq 0$,
$$
\sum_{j=1}^N a_j^k  \leq \left(\sum_{j=1}^N a_j\right)^k.  
$$
Now let $\delta >0$ be arbitrary. Choose  $C >0$ and a function $m: \N \to \N$ such that, for all $n \in \N$, $m(n) > n$ and \eqref{step1tech} holds. We find that
$$
 m(n)-n-1 \leq \sum_{j=n+1}^{m(n)-1}\left(\frac{\lambda_j-\lambda_n}{\varepsilon_j}\right)^k \leq Ce^{\delta \lambda_n}.
$$
Since $n  < m(\tilde{m}(n) +1)$ and $\tilde{m}(n) +1 \leq n$, we obtain that
$$
n-\tilde{m}(n) +1 <  m(\tilde{m}(n) +1) - (\tilde{m}(n) +1) + 2 \leq Ce^{\delta \lambda_{\tilde{m}(n) +1}} +3 \leq (C+3)e^{\delta \lambda_n}.
$$
This implies that 
$$
\sum_{l=\tilde{m}(n)}^{n}\left(\log\left(\frac{\lambda_{m(l)}}{\lambda_{m(l)}-\lambda_l}\right) +\sum_{j=l+1}^{m(l)-1}\left(\frac{\lambda_j-\lambda_l}{\varepsilon_j}\right)^k\right)        \leq C \sum_{l=\tilde{m}(n)}^{n}e^{\delta \lambda_l} \leq C(C+3)e^{2\delta \lambda_n}.
$$
\end{proof}
    
The following lemma is inspired by \cite[Lemma 4.2]{bayart}.
\begin{lemma} \label{techfreq1}
Let $\lambda$ be a frequency satisfying (SNC) and $k \geq 0$. There exists a frequency $\mu$ with $\mu \supseteq \lambda$ such that, for every $\delta >0$, there are $C >0$ and a function $p: \N \to \N$ satisfying, for all $n \in \N$, $p(n) > n$, $\mu_{p(n)}-\mu_n\leq 2$, and 
\begin{equation}
 \label{ineqtech} 
\sum_{l=\tilde{p}(n)}^{n}\left(\log\left(\frac{\mu_{p(l)}}{\mu_{p(l)}-\mu_l}\right) +\sum_{j=l+1}^{p(l)-1}\left(\frac{\mu_j-\mu_l}{\varepsilon_j(\mu)}\right)^k\right) \leq Ce^{\delta \mu_n}.
\end{equation}
\end{lemma}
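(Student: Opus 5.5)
We add points to $\lambda$ so that every gap is at most $1$, and then rerun the argument of Lemma \ref{techfreq0} for the enlarged frequency. For each $n$ with $\lambda_{n+1}-\lambda_n>1$, insert the points $\lambda_n+1,\lambda_n+2,\dots$ that lie strictly below $\lambda_{n+1}$, and let $\mu$ be the resulting increasing sequence. Then $\mu\supseteq\lambda$, every gap satisfies $\mu_{j+1}-\mu_j\le 1$, and at each newly created gap one of the two neighbours has length at least $1$. In particular, in the expression of (SNC) for $\mu$, each $\varepsilon_j(\mu)$ is either $\varepsilon$ of the corresponding $\lambda$-index or is bounded below by $\min\{1,\text{a gap of }\lambda\}$ involving gaps at least as large as before. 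Since $\mu$ has only about $\lambda_N$ new points below $\lambda_N$, indices only grow polynomially in $\lambda$, and $e^{\delta\mu_n}$ is comparable to $e^{\delta\lambda}$ at the neighbouring $\lambda$-points up to the factor $e^{\delta}$.

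The first task is to show that $\mu$ satisfies (SNC) with the additional constraint $\mu_{p(n)}-\mu_n\le 2$. Fix $\delta>0$ and a point $\mu_n$. If $\mu_n$ is followed by an inserted point, or is itself an inserted point, set $p(n)=n+1$. Then $\mu_{p(n)}-\mu_n\le1$ and, as long as the gap is not tiny, $\log(\mu_{n+1}/(\mu_{n+1}-\mu_n))\le\log(\mu_{n+1}/\min\{1,\text{gap}\})$. The case of an inserted point followed by $\lambda_{n+1}$ with a tiny remainder gap is handled by instead jumping over $\lambda_{n+1}$ to the next admissible point, or by placing the inserted points so that the last gap before $\lambda_{n+1}$ lies in $[1/2,1]$; the latter is easy, so we adopt it. Otherwise $\mu_n=\lambda_{n'}$ and the next stretch of $\lambda$ has gaps $\le1$. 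In that case use the index $m>n'$ given by (SNC) for $\lambda$, truncated: if $\lambda_m-\lambda_{n'}>2$, replace $m$ by the first index $m'$ with $\lambda_{m'}-\lambda_{n'}>1$. Then $\lambda_{m'}-\lambda_{n'}\in(1,2]$, the log term is at most $\log\mu_{m'}\lesssim \lambda_{n'}$, and the intermediate sum is a subsum of the one for $m$, hence at most $Ce^{\delta\lambda_{n'}}$. If the (SNC) index $m$ already satisfies $\lambda_m-\lambda_{n'}\le2$, we keep it; any inserted points between are impossible since those gaps are $\le1$. Once the power-$k$ terms are bounded, the term $\log(\mu_m/(\mu_m-\mu_n))$ is finite, and the bound $\sum_j a_j^k\le(\sum a_j)^k$ applies exactly as in Lemma \ref{techfreq0}. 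This yields a single-index estimate \eqref{step1tech} for $\mu$, with $\varepsilon_j(\mu)\ge$ the appropriate $\lambda$-quantities because adding points far away does not shrink the relevant $\min$.

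Finally we apply Lemma \ref{techfreq0} verbatim to $\mu$ with the function $p$ (its proof used only the single-index estimate), which gives \eqref{ineqtech} with $2\delta$ in place of $\delta$. We expect the main obstacle to be the bookkeeping at boundary points where inserted points meet original points. There, $\varepsilon_j(\mu)$ for an original $\lambda_j$ may be smaller than $\varepsilon_j(\lambda)$, since a new neighbour can sit closer, and the insertion rule has to be arranged so that this never happens. Our fallback is to insert only points at distance at least $1/2$ from the original ones.
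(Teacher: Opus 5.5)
Your overall plan works once one step is repaired, and its second half differs from the paper's. The step that fails is the definition of $p$ at an original point $\mu_n=\lambda_{n'}$ followed by an original point.

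\textbf{The gap.} At such a point you only know $\lambda_{n'+1}-\lambda_{n'}\le 1$. A later $\lambda$-gap exceeding $1$, and hence filled with inserted points, may occur before $\lambda_m$, and your rule does not account for it. You truncate in $\lambda$-indices: $m'$ is the first $\lambda$-index with $\lambda_{m'}-\lambda_{n'}>1$. This $m'$ can lie on the far side of such a gap, with the following consequences.
\begin{itemize}
\item $\lambda_{m'}-\lambda_{n'}$ is not bounded by $2$, so the constraint $\mu_{p(n)}-\mu_n\le 2$ fails.
\item The intermediate $\mu$-sum then contains arbitrarily many inserted points, so it is not a subsum of the (SNC) sum.
\item In the case $\lambda_m-\lambda_{n'}\le 2$, inserted points between are not impossible either: take gaps $0.1$ and then $1.6$.
\end{itemize}
The bound $\le 2$ is automatic for a truncation in $\mu$, while the subsum property is automatic for one in $\lambda$; your rule needs both at once. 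Separately, even with your $1/2$-spacing you only get $\varepsilon_j(\mu)\ge \varepsilon_j(\lambda)/2$, not $\varepsilon_j(\mu)\ge\varepsilon_j(\lambda)$. This costs only a harmless factor $2^k$.

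\textbf{The fix.} Use exactly the paper's choice: let $p(n)$ be the minimum of
\begin{itemize}
\item the $\mu$-index of $\lambda_m$,
\item the first inserted index after $n$, and
\item the first $q>n$ with $\mu_q-\mu_n\ge 1$.
\end{itemize}
Since all $\mu$-gaps are at most $1$, this gives $\mu_{p(n)}-\mu_n\le 2$. All intermediate points are then original points of the (SNC) range. Whenever $p(n)$ is not the index of $\lambda_m$, one has $\mu_{p(n)}-\mu_n\ge 1/2$, so the log term is at most $\log(1+2\mu_n)$. Alternatively, truncate in $\mu$ and note that only boundedly many inserted points, each with $\varepsilon_j(\mu)\ge 1/2$, can be intermediate.

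\textbf{Comparison with the paper.} After this repair, your route differs from the paper's in how the sum over $[\tilde p(n),n]$ is handled. You prove a single-index bound for $(\mu,p)$ and rerun the counting half of Lemma~\ref{techfreq0}. That argument indeed uses only three things: $p(n)>n$, the single-index bound, and the fact that each intermediate term is at least $1$ (because $\varepsilon_j(\mu)\le \mu_j-\mu_{j-1}$).

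The paper instead bounds the summed expression directly. It splits $[\tilde p(n),n]$ into the indices where $p$ follows $m$ and the rest, and transfers the summed bound for $\lambda$ by comparing $\tilde p(n)$ with $\tilde m(j)$. Your version avoids that comparison and is somewhat cleaner. Your remark that $e^{\delta\mu_n}$ is comparable to $e^{\delta\lambda}$ at neighbouring $\lambda$-points is false deep inside long gaps, but it is not needed anywhere.
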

\begin{proof}
We recursively define the sequence $(p_j)_{j\in\N}$ of natural numbers by $p_1 = 1$ and $p_{j+1} = p_j + l_j+1$, where $l_j \in \N_0$ is the unique natural number such that $l_j <\lambda_{j+1} - \lambda_j \leq l_j+1$. We  define the frequency $\mu$ as follows: $\mu_{p_j} = \lambda_j$ and $\mu_{p_j +l}$, $l=1, \ldots l_j $, in such a way that  $1/2 \leq \mu_{p_j+l+1}-\mu_{p_j+l}\leq 1$ for all $l = 0, \ldots, l_j$. By definition, $\mu \supseteq \lambda$.

 Let $\delta >0$ be arbitrary. As $\lambda$ satisfies (SNC), Lemma \ref{techfreq0} yields that there are $C_1 >0$ and  $m: \N \to \N$ such that, for all $n \in \N$, $m(n) > n$, $n - \tilde{m}(n) +1 \leq C e^{\delta \lambda_n/2}$, and \eqref{step0tech} holds. We define $p: \N \to \N$ by $p(n) = n+1$ if $n \notin \{k_j \mid j \in \N\}$ and
$$
p(k_j) = \min\{ k_{m(j)}, \inf\{q > k_j \mid q \notin \{k_l \mid l \in \N\}\}, \inf\{q > k_j \mid \mu_q - \mu_{k_j} \geq 1\} \}.
$$
It is clear that $p(n) >n$ for all $n \in \N$. The definitions of $\mu$ and $p$ imply that $\mu_{p(n)}-\mu_n\leq 2$ for all $n \in \N$. We now show that there is $C >0$ such that \eqref{ineqtech} holds for all $n \in \N$  (with $2\delta$ instead of $\delta$). Let $C_2 >0$ be such that $\log(1+2x) \leq C_2e^{\delta x}$ for all $x \geq 0$.
 We distinguish between two cases. First assume that $n \notin \{k_j \mid j \in \N\}$. Then, $p(n)=n+1$ and $\tilde{p}(n) = n-1$. Hence,
    \begin{align*}
        \sum_{l=\tilde{p}(n)}^{n}\Bigg(\log\left(\frac{\mu_{p(l)}}{\mu_{p(l)}-\mu_l}\right)&+\sum_{j=l+1}^{p(l)-1}\left(\frac{\mu_j-\mu_l}{\varepsilon_j(\mu)}\right)^k\Bigg)\\
        &=\log\left(\frac{\mu_{n}}{\mu_{n}-\mu_{n-1}}\right)+\log\left(\frac{\mu_{n+1}}{\mu_{n+1}-\mu_n}\right)\\
        &=\log\left(1+\frac{\mu_{n-1}}{\mu_{n}-\mu_{n-1}}\right)+\log\left(1+\frac{\mu_n}{\mu_{n+1}-\mu_n}\right)\\
        &\leq \log\left(1+2\mu_{n-1}\right)+\log\left(1+2\mu_n\right)\\
        &\leq 2C_2e^{\delta \mu_n}.
    \end{align*}
   Next, suppose that $n = k_j$ for some $j \in \N$. We estimate the two sums
\begin{equation}
\label{twosums}
    \sum_{l=\tilde{p}(n)}^{n}\log\left(\frac{\mu_{p(l)}}{\mu_{p(l)}-\mu_l}\right) \qquad \mbox{and} \qquad     \sum_{l=\tilde{p}(n)}^{n}\sum_{j=l+1}^{p(l)-1}\left(\frac{\mu_j-\mu_l}{\varepsilon_j(\mu)}\right)^k
 \end{equation}
   separately. For the second one, by definition  of $\mu$ and $p$, we find that 
$$
        \sum_{l=\tilde{p}(n)}^{n}\sum_{q=l+1}^{p(l)-1}\left(\frac{\mu_q-\mu_l}{\varepsilon_q(\mu)}\right)^k\leq 2\sum_{l=\tilde{m}(j)}^{j}\sum_{q=l+1}^{m(l)-1}\left(\frac{\lambda_q-\lambda_l}{\varepsilon_q(\lambda)}\right)^k \leq 2Ce^{\delta \lambda_j}=2Ce^{\delta \mu_n}.
$$
Next, we consider the first sum in \eqref{twosums}. Define
    \[E=\{l \in [\tilde{p}(n), n] \mid l  = k_q \mbox{ and } p(k_q) = k_{m(q)} \mbox{ for some } q \in \N\}\]
and $\bar{E}  = [\tilde{p}(n), n] \backslash E$. Then,
   \begin{equation}
   \label{6800}\sum_{l=\tilde{p}(n)}^{n}\log\left(\frac{\mu_{p(l)}}{\mu_{p(l)}-\mu_l}\right)= \left ( \sum_{l \in E} + \sum_{l \in \bar{E}}\right) \log\left(\frac{\mu_{p(l)}}{\mu_{p(l)}-\mu_l}\right)  .
   \end{equation}
   For the first sum, we obtain that 
   $$
    \sum_{l \in E} \log\left(\frac{\mu_{p(l)}}{\mu_{p(l)}-\mu_l}\right) \leq \sum_{l=\tilde{m}(j)}^{j} \log\left(\frac{\lambda_{m(l)}}{\lambda_{m(l)}-\lambda_l}\right) \leq Ce^{\delta \lambda_j}=Ce^{\delta \mu_n}.
$$
    For the second sum, note that $\mu_{p(l)}-\mu_l\geq 1/2$ for all $l \in \bar{E}$. Therefore,
$$
         \sum_{l \in \bar E} \log\left(\frac{\mu_{p(l)}}{\mu_{p(l)}-\mu_l}\right) \leq \sum_{l\in\bar{E}} \log(1+2\mu_{l}) \leq C_2(\tilde{p}(n) - n +1) e^{\delta \mu_{n}}
         $$
 The definition of $p$ implies that $\tilde{p}(n) - n \leq \tilde{m}(j) -j$. Since $ \tilde{m}(j) -j +1 \leq Ce^{\delta \lambda_j} = Ce^{\delta \mu_n}$, we obtain that
$$
      \sum_{l \in \bar E} \log\left(\frac{\mu_{p(l)}}{\mu_{p(l)}-\mu_l}\right) \leq C_2(\tilde{p}(n) - n +1) e^{\delta \mu_{n}}  \leq CC_2e^{2\delta \mu_n}.
$$
This shows the result.
\end{proof}
 We are ready to show  Theorem \ref{main-article}.
 \begin{proof}[Proof of Theorem \ref{main-article}] In view of Lemma \ref{techfreq0}, we may assume without loss of generality that,  for every $\delta >0$, there are $C >0$ and a function $m: \N \to \N$ such that, for all $n \in \N$, $m(n) > n$, $\lambda_{m(n)}-\lambda_n\leq 2$, and 
$$ 
\sum_{l=\tilde{m}(n)}^{n}\left(\log\left(\frac{\lambda_{m(l)}}{\lambda_{m(l)}-\lambda_l}\right) +\sum_{j=l+1}^{m(l)-1}\left(\frac{\lambda_j-\lambda_l}{\varepsilon_j}\right)^k\right) \leq Ce^{\delta \lambda_n}.
$$
Hence, Theorem \ref{max-ineq-1} implies that, for every $\delta >0$, there is $C >0$ such that, for all $n \in \N$,
$$
\sup_{\lambda_n \leq x \leq \lambda_{n+1}}\norm{R_x^{\lambda,k}}_{\mathcal{L}(\H k)}\leq Ce^{\delta \lambda_n}.
$$
Consequently, for all $x >0$ and $D \in \H k$,
$$
 |R_x^{\lambda,k}(D)(0)| \leq \norm{R_x^{\lambda,k}(D)}_{\H k}\leq Ce^{\delta x} \| D\|_{\H k}.
 $$
The result now follows from the Bohr–Cahen formula (Proposition \ref{Bohr-Cahen}).
\end{proof}

\noindent 
\textbf{Acknowledgement.} We would like to thank Frederik Broucke for helpful discussions on the topic of this paper.



 


\begin{thebibliography}{99}

\bibitem{BCQ} R.~Balasubramanian, B.~Calado, H.~Queff{\'e}lec, \emph{The Bohr inequality for ordinary Dirichlet series}, Studia Math. \textbf{175} (2006), 285--304. 

\bibitem{bayart}
F.~Bayart,
\emph{Convergence and almost sure properties in Hardy spaces of Dirichlet series},
Math. Ann. \textbf{382} (2022), 1485--1515.

\bibitem{first_saksman}
F.~Bayart, H.~Queff{\'e}lec, K.~Seip,
\emph{Approximation numbers of composition operators on $H^p$ spaces of Dirichlet series},
Ann. Inst. Fourier (Grenoble) \textbf{66} (2016), 551--588.

\bibitem{Bohr}
H.~Bohr,
\emph{\"Uber die gleichm\"a\ss ige Konvergenz Dirichletscher Reihen},
J. Reine Angew. Math. \textbf{143} (1913), 203--211.

\bibitem{Chui} C.~K.~Chui, \textit{An introduction to wavelets}, Academic Press, 1992. 

\bibitem{DS} G.~Debruyne, D.~Seifert, \emph{An abstract approach to optimal decay of functions and operator semigroups}, Isr. J. Math. \textbf{233} (2019), 439--451.

\bibitem{finorder}
A.~Defant, I.~Schoolmann,
\emph{Holomorphic functions of finite order generated by Dirichlet series},
Banach J. Math. Anal. \textbf{16} (2022), 33

\bibitem{bohrpoly}
A.~Defant, I.~Schoolmann,
\emph{Scaling a theorem of Harald Bohr},
J. Math. Anal. Appl. \textbf{529} (2024), 127324.

\bibitem{hardy}
G.~H.~Hardy, M.~Riesz,
\emph{The general theory of Dirichlet's series},
Cambridge Tracts in Mathematics and Mathematical Physics, 1964.


\bibitem{helson} H.~Helson, \emph{Dirichlet series}, Regent Press, 2005.

\bibitem{Kahane} J.-P.~Kahane, \emph{Sur les fonctions moyenne-p\'eriodiques born\'ees}, Ann. Inst. Fourier \textbf{7} (1957), 293--314.

\bibitem{Landau}
E.~Landau,
\emph{\"Uber die gleichm\"a\ss ige Konvergenz Dirichletscher Reihen},
Math. Z. \textbf{11} (1921), 317--318.


\bibitem{Meyer} Y.~Meyer, \emph{Global and local estimates on trigonometric sums}, Trans. R. Norw. Soc. Sci. Lett. 2018(2), 1--25.

\bibitem{Neder} L.~Neder, \emph{Zum Konvergenzproblem der Dirichletschen Reihen beschr\"ankter Funktionen}, Math. Z. \textbf{14} (1922), 149--158.

\bibitem{schoolmann}
I.~Schoolmann,
\emph{On Bohr's theorem for general Dirichlet series},
Math. Nachr. \textbf{293} (2020), 1591--1612.










\end{thebibliography}
\end{document}